\documentclass[journal]{IEEEtran}
\usepackage[utf8]{inputenc}
\usepackage{amsmath,amsthm}
\usepackage{mathtools}
\mathtoolsset{showonlyrefs}

\usepackage{tikz}
\usetikzlibrary{decorations.markings, positioning, intersections}

\usepackage{amssymb}

\usepackage{enumitem}
\setdescription{leftmargin=1em, style=sameline}

\usepackage{xparse}

\usepackage[bookmarksnumbered,bookmarksopen,
unicode]{hyperref}
\pdfstringdefDisableCommands{\renewcommand{\leq}{̆\unichar{"2264}}}

\setlist[enumerate,1]{label=(\roman*)}
\setlist[enumerate,2]{label=(\arabic*),
  ref=(\roman{enumi}.\arabic*)}
\setlist[enumerate,3]{label=(\alph*),
  ref=(\roman{enumi}.\arabic{enumii}.\alph*)}


\newcommand{\interior}[1]{\operatorname{int}\left(#1\right)}
\newcommand{\card}[1]{\left|#1\right|}
\DeclarePairedDelimiter\abs{\lvert}{\rvert}

\newcommand{\tnorm}[1]{\norm[2]{#1}}

\newcommand{\binSet}{\{0,1\}}
\newcommand{\sprod}[2]{\left \langle #1,#2 \right \rangle}
\newcommand{\N}{\mathbb N}

\newcommand*{\set}[1]{\left\{#1\right\}}

\NewDocumentCommand{\probability}{d()om}{\operatorname{\mathbb{P}}%
  \IfValueT{#1}{\sb{#1}}%
  \left[#3\IfValueT{#2}{\,\middle|\,#2}\right]}
\NewDocumentCommand{\expectation}{d()om}{\operatorname{\mathbb{E}}%
  \IfValueT{#1}{\sb{#1}}%
  \left[#3\IfValueT{#2}{\,\middle|\,#2}\right]}
\NewDocumentCommand{\entropy}{om}{\operatorname{\mathbb{H}}\left[#2%
    \IfValueT{#1}{\,\middle|\,#1}\right]}
\NewDocumentCommand{\mutualInfo}{omm}{\operatorname{\mathbb{I}}%
  \left[#2;#3%
    \IfValueT{#1}{\,\middle|\,#1}\right]}
\newcommand*{\bentropy}[1]{\operatorname{\mathbb{H}}\left[#1\right]}

\newcommand*{\size}[1]{\left|#1\right|}
\newcommand*{\norm}[2][]{\left\|#2\right\|\sb{#1}}

\newcommand*{\prefix}[2]{#2|_{#1}}
\newcommand*{\flipCut}[2]{#2^{\oplus(#1)}}
\DeclareMathOperator{\range}{range}

\DeclareMathOperator{\Compl}{Compl}
\DeclareMathOperator{\dom}{dom}

\tikzset{
  namey/.style=
  {insert path=
    {edge[dashed, thin] node[left, at end] {#1}
      (\tikztostart -| 0,0)}},
  namex/.style=
  {insert path=
    {edge[dashed, thin] node[below, at end] {#1}
      (\tikztostart |- 0,0)}},
  label/.style={%
    postaction={ decorate,
      decoration={ markings, mark=at position .5 with
        \node[above left]{#1};}}}}

\newtheoremstyle{case}{}{}{}{}{\scshape}{:}{ }{}
\theoremstyle{case}
\newtheorem{case-internal}{Case}
\newenvironment{case}[1]{\begin{case-internal}\emph{#1}}
  {\end{case-internal}}
\newenvironment{casesblock}{\setcounter{case-internal}{0}}{}


\newcommand{\RR}{\mathbb{R}}
\DeclareMathOperator{\sign}{sgn}

\theoremstyle{plain}
\newtheorem{theorem}{Theorem}[section]
\newtheorem{lemma}[theorem]{Lemma}
\newtheorem{proposition}[theorem]{Proposition}

\theoremstyle{definition}
\newtheorem{orac}[theorem]{Oracle}
\newtheorem{definition}[theorem]{Definition}
\newtheorem{fact}[theorem]{Fact}

\theoremstyle{remark}
\newtheorem{remark}[theorem]{Remark}

\title{Lower Bounds on the Oracle Complexity of Nonsmooth Convex Optimization via\\ Information Theory}
\author{
\IEEEauthorblockN{G\'abor Braun, Crist\'obal 
Guzm\'an, Sebastian Pokutta}
\thanks{G. Braun is with the Department of Industrial \& Systems 
Engineering at Georgia Institute of Technology. C. Guzm\'an is with
Facultad de Matem\'aticas \& Escuela de Ingenier\'ia at 
Pontificia Universidad Cat\'olica de Chile. S. Pokutta is with with the Department of Industrial \& Systems 
Engineering at Georgia Institute of Technology.}
\thanks{The authors would like to thank Arkadi Nemirovski and Fran\c{c}ois Glineur for the valuable
discussions.} 
\thanks{Research reported in this paper was partially supported
by NSF grants  CMMI-1232623 and CMMI-1300144.}
\thanks{Manuscript submitted July 19, 2014; revise April 19, 2017.}
}

\date{}
\begin{document}
\maketitle
\begin{abstract}
  We present an information-theoretic approach to lower bound the 
  oracle complexity of nonsmooth black box convex optimization, 
  unifying previous lower bounding techniques 
  by identifying a combinatorial problem,
  namely string guessing, as a single source of hardness.
  As a measure of complexity we use
  distributional oracle complexity,
  which subsumes randomized oracle complexity
  as well as worst-case oracle complexity.
  We obtain strong lower bounds
  on distributional oracle complexity
  for the box \([-1,1]^{n}\),
  as well as for the \(L^p\)-ball for \(p \geq 1\)
  (for both low-scale and large-scale regimes),
  matching worst-case upper 
  bounds,
  and hence we close the gap
  between distributional complexity, and in particular, 
  randomized complexity, and worst-case complexity.
  Furthermore, the bounds remain essentially the 
  same for high-probability and bounded-error
  oracle complexity, and even for combination of the two,
  i.e., bounded-error high-probability oracle
  complexity.
  This considerably
  extends the applicability of known bounds.
\end{abstract}
\vspace{0.2cm}

\begin{IEEEkeywords}
Convex optimization, oracle complexity, lower complexity bounds;
randomized algorithms; distributional and high-probability lower bounds.
\end{IEEEkeywords}

\section{Introduction}
\label{sec:introduction}

For studying complexity of algorithms,
oracle models are popular
to abstract away from computational
resources,
i.e., to focus on \emph{information}
instead of \emph{computation} as the limiting resource.
Therefore oracle models typically measure complexity by
the number of required queries to the oracle,
whose bounds are often not subject to
strong computational complexity assumptions, such as \(P \neq NP\).

We study the complexity of nonsmooth convex optimization in the
standard black box model.
The task is to find the optimum of a function \(f\),
which is only accessible through a local oracle.
The oracle can be queried with any point \(x\) of the domain,
and provides information about \(f\)
in a small neighborhood of \(x\).
This generic model captures the behavior of most 
first-order methods, successfully applied
in engineering \cite{BenTal:2001},
machine learning \cite{Jaggi:2013},
image and signal processing
\cite{TV_Beck:2009,Wright:2010},
and compressed sensing
\cite{ell1_Beck:2009,Nemirovski:2013}.
All these applications require only
medium accuracy solutions
even with noisy data.
Moreover, in the era of big data,
other general-purpose methods,
such as interior-point or Newton-type,
are prohibitively expensive.
This makes a strong case for using first-order methods,
and  the black box model has been extensively 
studied for various function classes
and domains such as \(L^p\)-balls and the box.
Most of the lower bounds were established in
\cite{Nemirovski:1983,Nemirovski:1994,Nesterov:2004}.
These bound worst-case complexity
using the technique of \emph{resisting oracles},
continuously changing the function \(f\) to provide the less
informative consistent answers.
In particular, for Lipschitz-continuous convex functions
on the \(n\)-dimensional \(L^p\)-ball for \(1 \leq p < \infty\),
depending on an additive accuracy \(\varepsilon\),
two regimes of interest were established:
the high accuracy or \emph{low-scale} regime,
where the dimension \(n\) appears as a multiplicative term
in the complexity $\Theta(n\log 1/\varepsilon)$,
and the low accuracy or \emph{large-scale} regime,
where the complexity $\Theta(1/\varepsilon^{\max\{p,2\}})$
is independent of the dimension.
Interestingly,
each of the two regimes has its own optimal method:
the Center of Gravity method in the low-scale regime,
and the Mirror-Descent method in the large-scale
\cite{Nemirovski:1983}.

We provide a unification of lower bounds on the oracle 
complexity for nonsmooth convex optimization.
We will also identify a core combinatorial problem,
namely, a string guessing problem,
from which we derive all our lower bounds for convex optimization.
Thus, we obtain strong lower bounds
on distributional oracle complexity in the nonsmooth case,
matching all known bounds of the
worst-case. In fact, we will even show that these bounds do not only
hold in expectation but also with high
probability, and even for Monte Carlo algorithms,
which provide correct answer only with a bounded error probability.

The core problem will be handled by information theory,
which is a natural approach due to
the informational nature of oracle models.
Information theory has been prominently used
to obtain strong lower bounds in other complexity problems as well.

\subsection*{Related work}

Our approach through information theory
was motivated by the following works obtaining
information-theoretic lower bounds on:
communication
\cite{braverman2011information,braverman2012information3},
data
structures \cite{p2011unifying,dasgupta2012sparse},
extended formulations
\cite{braverman2012information,BP2013commInfo,BFP2013},
streaming computation \cite{chakrabarti2013information}, and many
more. 
Lower bounds were established
for many other classical oracle settings,
such as submodular function minimization with
access to function value oracles
\cite{goel2009approximability,iwata2009submodular,%
  svitkina2011submodular}, however
typically not explicitly relying on
information theory but rather bounding the randomized complexity by
means of Yao's minimax principle.
For first-order oracles,
algorithms have been proposed \cite{chudak2007efficient},
however next to nothing is
known about strong lower bounds. 

As pointed out to us by an anonymous reviewer, 
the string guessing problem has already been used
for lower bounds on the advice
complexity for online combinatorial algorithms 
\cite{Bockenhauer:2013}. In this work, we use string
guessing as a base problem for deriving oracle lower bounds.
However, the version of string guessing in \cite{Bockenhauer:2013} 
is slightly different from ours:
There one bit must be predicted at a time---with
or without advice---and the cost is the total number
of wrongly guessed bits.
The difference becomes essential for erroneous oracles.
Further study of the connections between this version of the problem and, e.g.,
online convex optimization, might be interesting for future research.

For convex optimization, oracles based on linear optimization
have been studied extensively and lower bounds on the number of
queries are typically obtained by observing that each iteration adds 
only one vertex at a time
\cite{jaggi2013revisiting,lan2013complexity,ghPlaying2013}.
These oracles are typically weaker than general local oracles. 

The study of oracle complexity started with
the seminal work \cite{Nemirovski:1983},
where worst-case complexity is determined
up to a constant factor for several function families.
(see also \cite{Nemirovski:1994,Nesterov:2004} for
alternative proofs and approaches).
Interestingly,
these bounds were extended to randomized oracle complexity
at the price of a logarithmic
multiplicative gap \cite[4.4.3 Proposition 2]{Nemirovski:1983}. 
The proof of the latter result is somewhat technical, involving
various reductions from randomized to deterministic algorithms,
together with a union bound on the trajectories of the 
algorithms; this use of the union bound is essentially the source
of the logarithmic gap. On the other hand, our arguments are robust to 
randomization, due to our focus on the distributional setting. 
It is nevertheless important to emphasize that most of the 
function families we employ for lower bounds are either borrowed 
from or inspired by constructions in \cite{Nemirovski:1983}.

Recently, the study of lower bounds for stochastic oracles 
has become a widely popular topic, motivated by their connections
with machine learning. Such oracles were first studied in 
\cite{Nemirovski:1983}, and for recent lower bounds we refer to
\cite{Raginsky:2011,Agarwal:2012}. In this work we do not 
consider stochastic oracles.


An interesting result in \cite{Sridharan} provides a general 
(worst-case) lower complexity bound for Lipschitz convex 
minimization in terms of fat 
shattering dimension of the class of linear functionals where
the subgradients lie.
As expected, our 
lower bounds coincide with these fat shattering numbers, but hold 
under more general assumptions, namely the distributional setting. 
In a related note, the lower bounds of this paper have been 
extended to handle more general
(a.k.a. non-standard) settings; for these results we refer to 
\cite[Corollary 3.8.1]{GuzmanPhD}.

While our lower bounds are obtained in a fashion somewhat similar to those in statistical
minimax theory, the key in our approach is estimating
what is learned from each obtained subgradient given what has been
learned from previous subgradients---in statistical
minimax theory, we typically take (random!) samples drawn i.i.d (see
\cite{Raginsky:2011} for a detailed discussion).

\subsection*{Contribution}

We unify lower bounding techniques for convex nonsmooth optimization
by identifying a common source of hardness and introducing an
emulation mechanism that allows us to reduce different convex 
optimization settings to this setup. Our arguments are surprisingly 
simple, allowing for a unified treatment. 

\begin{description}

\item[Information-theoretic framework.]

We present an information-theoretic framework to lower bound
the oracle complexity of any type of oracle problem.
The key insight is that
if the information content of the oracle answer to a query is
low on average,
then this fact alone is enough for establishing
a strong lower bound on
both the distributional and the high probability complexity.

\item[Common source of hardness.]

Our base problem is learning a hidden string via guessing,
called the \emph{String Guessing Problem (SGP)}.
In Proposition~\ref{prop:string-via-oracle} we
establish a strong lower bound on the distributional and high 
probability oracle complexity of the string guessing problem, even for
algorithms with bounded error.
These bounds on the oracle complexity are established via
a new information-theoretic framework for iterative oracle-based
 algorithms. 

We then introduce a special reduction mechanism,
an \emph{emulation} in Definition~\ref{def:oracle-emulation},
rewriting algorithms and oracles between different problems,
see Lemma~\ref{lem:emulator-complexity}.
This will be the common framework for our lower bounds.

\item[First lower bounds for distributional and high-probability
  complexity for all local oracles.]

First, we establish lower bounds on the complexity for
a simple class of first-order local oracles for
Lipschitz-continuous convex functions
both on the \(L^{\infty}\)-box
in Theorem~\ref{th:box-nonsmooth-lower-bound}
and on the \(L^{p}\)-ball in Theorem~\ref{thm:large-scale-ball}.

In Section~\ref{sec:oracle_ind_LB} we extend
all lower bounds
in Theorem~\ref{perturb-large-scale}
to arbitrary local oracles
by using \emph{random perturbation}, 
instead of adaptive perturbation 
as done for worst-case lower bounds. 
A key technical aspect is what we call
the Lemma of unpredictability
(Lemmas~\ref{lem:unpredictability-ball-case} and
\ref{lem:unpredictability-box-case}),
which asserts that
with probability 1 arbitrary local
oracles are not more informative than the simple oracles
studied in Sections \ref{sec:oracle-compl-box} and 
\ref{sec:large-scale-compl} when adding random
perturbations.

The resulting bounds match classical lower bounds on worst-case complexity
(see Figure~\ref{fig:smallVsLarge}),
but established for distributional oracle complexity,
i.e., average case complexity,
and high-probability oracle complexity. 
Finally, our analysis extends to bounded-error algorithms:
even if the algorithm is allowed to provide erroneous answer
with a bounded probability
(e.g. discard a bounded subset of instances,
or be correct only with a certain probability
on every instance),
essentially the same lower bounds
hold. 

\item[Closing the gap between randomized and worst-case oracle
  complexity.] 

In the case of the $L^{\infty}$-box as well as the \(L^p\)-ball
for \(1 \leq p < \infty\),
our bounds show that all four complexity measures coincide,
namely, high-probability, distributional, randomized, and
worst-case complexity.
This not only simplifies the proofs in \cite{Nemirovski:1983}
for randomized complexity, but also closes the gap between
worst-case and randomized complexity (\cite[4.4.3 Proposition
2]{Nemirovski:1983}).
\end{description}

\begin{figure}[h!]
  \centering
  \small
  \begin{tikzpicture}[x=4mm, y=4mm]
    \draw[-latex] (0,0) -- ++(up:10)
    node[left]{\(\Compl_{\mathcal{D}}(\mathcal{F},\varepsilon)\)};
    \draw[-latex] (0,0) -- ++(right:15)
    node[above left=2]{\(r \coloneqq \max\{p,2\}\)}
    node[below]{\(1 / \varepsilon\)};
    \draw[label={\(\frac{1}{\varepsilon^{r}}\)}]
    plot[domain=1/4:4, smooth, samples=4]
    (\x, {(15/16 + \x^2) / 4})
    coordinate(break-square)
    [namey={\(n\)}, namex={\(\sqrt[r]{n}\)}]
    ;
    \draw[dotted]
    plot[domain=4:6, smooth, samples=4] (\x,
    {((1 - tanh(3 * (\x - 4.5))) / 2) * ((15/16 + \x^2) / 4)
      +
      ((1 + tanh(3 * (\x - 4.5))) / 2) * 3 * ln(\x)})
    coordinate(break-linear)
    [namey={\(n \log n^{1/r + \delta}\)},
     namex={\(n^{1/r + \delta}\)}];
    \draw[label={\(n \log \frac{1}{\varepsilon}\)}]
    plot[domain=6:15, smooth, samples=4]
    (\x, {3 * ln(\x)})
    ;
  \end{tikzpicture}
  \caption{\label{fig:smallVsLarge}Distributional complexity
    as a function of $1/\varepsilon$
    for the \(L^{p}\)-ball, \(1 \leq p < \infty\).}
\end{figure}
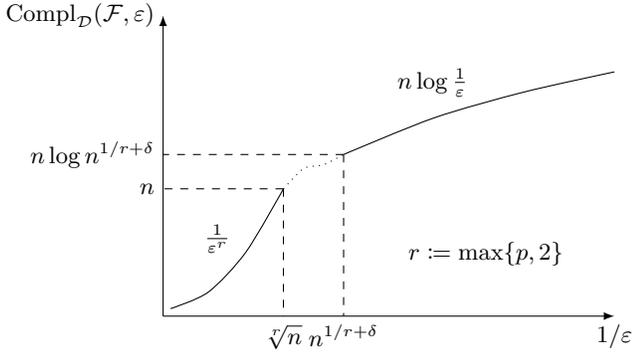

\section{Preliminaries}
\label{sec:preliminaries}

\subsection{Convex functions and approximate solutions}
In the following,
let \(X\) be a convex body in \(\RR^{n}\),
i.e., a full dimensional compact convex set.
We denote 
by \(B_p(x,r)\) the ball in \(\RR^n\) centered at \(x\)
with radius \(r\) in the \(L^p\)
norm, where \(1 \leq p \leq \infty\).
Recall that \(B_{\infty}(x,r)=\prod_{i=1}^n[x_i-r,x_i+r]\).
Let \(e_i\) denote
the \(i\)-th coordinate vector in \(\RR^n\).

Recall that a function $f \colon X\to \RR$ is \emph{convex} if for
all $x,y\in X$ and $0\leq \lambda\leq 1$,
\[ f(\lambda x+(1-\lambda)y) \leq \lambda f(x)+(1-\lambda)f(y).\]
Recall that $f$ is \emph{subdifferentiable} at $x\in X$ if there
exists $g\in\RR^n$ such that for all $y\in X$, 
\[ f(x)+\langle g, y-x\rangle \leq f(y).\]
In this case, we say that $g$ is a \emph{subgradient} of $f$ at $x$, and
the set of all subgradients of $f$ at $x$ is called the \emph{subdifferential},
denoted by $\partial f(x)$. It should be noted that when $f$ is
differentiable at $x$, the subdifferential is a singleton, namely 
$\partial f(x)=\{\nabla f(x)\}$. 
The connection to the differentiable case 
leads to the interpretation of a subgradient as a proxy for the local
behavior of $f$ around $x$, although in the non-differentiable case
the subgradient only provides an underestimate for the function (see 
Figure~\ref{fig:cvx_fc}).
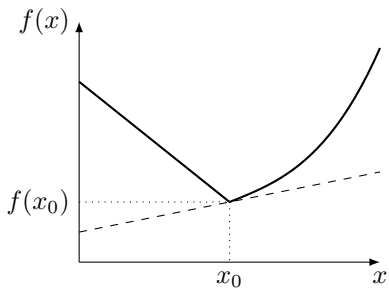
\begin{figure}[h!]
\centering
\begin{tikzpicture}[x=4mm, y=4mm]
\draw[-latex] (0,0) -- ++(up:8)
    node[left]{\(f(x)\)};
    \draw[-latex] (0,0) -- ++(right:10)
    node[below]{\(x\)};
    \draw[thick] (0,6) -- (5,2);
    \draw[thick,domain=5:10] plot (\x,{0.4*(\x-5)+2+0.025*(\x-5)*(\x-5)*(\x-5)});
   \draw[dashed,domain=0:10] plot(\x, {2+(0.2*(\x-5)});
   \draw[dotted] (0,2) node [left] {\(f(x_0)\)} -- (5,2);
   \draw[dotted] (5,2) -- (5,0) node [below] {\(x_0\)};
\end{tikzpicture}
 \caption{\label{fig:cvx_fc} Graph of a convex function in solid thick
 line. A subgradient at $x_0$ in dashed line.}
\end{figure}
Finally, recall that a function $f \colon X\to \RR$ is
Lipschitz-continuous with
Lipschitz constant $L$ with respect to a norm $\|\cdot\|$
if for all $x,y \in X$,
\[\size{f(x) - f(y)} \leq L \norm{x-y}.\]

Let \(\mathcal{F}\) be a family of real valued,
Lipschitz-continuous convex functions on \(X\)
with Lipschitz constant \(L\) with respect to $\|\cdot\|$.
For each $f\in \mathcal{F}$,
let $f^{\ast} \coloneqq\min_{x\in X} f(x)$.
Given an accuracy level \(\varepsilon > 0\),
an \emph{\(\varepsilon\)-minimum of \(f\)}
is a point \(x \in X\) satisfying
\(f(x) < f^{\ast} + \varepsilon\).
The set of \(\varepsilon\)-minima
will be denoted by \( \mathcal{S}_{\varepsilon}(f)\).

In general, an \(\varepsilon\)-minimum
need not identify \(f\) uniquely.
However, it simplifies the analysis
when \(\varepsilon\)-minima
identify the function instance, as this
makes optimization equivalent to learning
the instance.
We call this the packing property:
\begin{definition}[Packing property]
  \label{def:packing}
  A function family \(\mathcal{F}\) satisfies the \emph{packing property}
  for an accuracy level \(\varepsilon\),
  if no two different members \(f,g \in \mathcal{F}\)
  have common \(\varepsilon\)-minima, i.e., 
  if \(\mathcal{S}_{\varepsilon}(f) \cap \mathcal{S}_{\varepsilon}(g)
    = \emptyset.\)
\end{definition}

\subsection{Oracles and Complexity}

We analyze the distributional complexity of
approximating solutions in convex optimization
under the standard black box oracle model,
where algorithms have
access to the instance \(f\) only by querying 
an oracle \(\mathcal{O}\). Our prototypical 
example is minimization of a convex
function by first-order methods: For this we prescribe  
a class of convex functions $\mathcal{F}$
with domain $X$, and a target accuracy $\varepsilon>0$.
First-order methods
are based on sequentially querying feasible points on 
$X$, computing the value and subgradient
of $f$ at these points, and using this information to find an
$\varepsilon$-minimum. This computational paradigm includes 
most known methods for continuous optimization, such as 
Subgradient Descent, Mirror-Descent, Center of Gravity, and the 
Ellipsoid Method,
among others. The motivation behind oracle complexity is to
establish which methods 
are optimal: If the cost of implementing each step of the method 
is not too high, its number of iterations (that is,
the number of oracle calls) is a reasonable
proxy for the overall complexity.

Let us now introduce the model. 
Given a class of convex functions $\mathcal{F}$ with domain $X$,
an oracle $\mathcal{O}$ provides partial information
\(\mathcal{O}_{f}(x)\) about an
unknown instance \(f\) from $\mathcal{F}$.
When the instance \(f\) is clear from the context
we shall omit the subscript \(f\).

The considered oracles  $\mathcal{O}$ are \emph{local}. 
We say that $\mathcal{O}$ is a local oracle
if for all $f_{1}, f_{2} \in \mathcal{F}$
that are equal in a 
neighborhood of \(x\),
we have $\mathcal{O}_{f_{1}}(x) = \mathcal{O}_{f_{2}}(x)$.
An important case is a \emph{first-order} oracle,
which answers a query \(x \in X\) by
\(\mathcal{O}_f(x)=(f(x),g)\), where
$f(x)$ is the function value and $g\in\partial f(x)$ is a 
subgradient of \(f\) at \(x\). 
Note however that not every first-order oracle is local:
at non-differentiable points of \(f\) a non-local oracle can
choose between various subgradients
taking into account the whole function \(f\),
thereby e.g., encoding an \(\varepsilon\)-minimum. 
The requirement of locality allows us to rule out such 
unnatural situations.

Let \(\mathcal{A(O)}\) denote the set of
deterministic algorithms based on oracle \(\mathcal{O}\).
Given an algorithm $A\in \mathcal{A(O)}$, an instance 
$f\in\mathcal{F}$, and target accuracy
$\varepsilon>0$, we denote \(T_{A}(f, \varepsilon)\)
the number of queries $A$ makes in order to reach
an $\varepsilon$-minimum. This way,
the \emph{worst-case} oracle complexity is
defined as
\[\Compl_{\mathcal{WC}}(\mathcal{O},\mathcal{F},\varepsilon)
\coloneqq \adjustlimits \inf_{A\in\mathcal{A(O)}}
\sup_{f\in\mathcal{F}} T_A(f,\varepsilon).\]
Following \cite{Nemirovski:1983},
\emph{randomized complexity} is defined as
\[\Compl_{\mathcal{R}}(\mathcal{O},\mathcal{F},\varepsilon)
\coloneqq
\adjustlimits
\inf_{A\in \Delta(\mathcal{A(O)})}
\sup_{f\in\mathcal{F}} \expectation(A){T_{A}(f,\varepsilon)},\]
where \(\Delta(\mathcal{B})\) is the set of
probability distributions on the set \(\mathcal{B}\). 
The interpretation of this notion of complexity is that
randomized algorithms have the additional power of
private sources of randomness, and can alternatively be 
seen as a mixture of deterministic algorithms.

The measure we will bound in our work is the even
weaker notion of \emph{distributional complexity}
\[\Compl_{\mathcal{D}}(\mathcal{O},\mathcal{F},\varepsilon)
\coloneqq
\adjustlimits
\sup_{F \in \Delta(\mathcal{F})} \inf_{A\in \mathcal{A(O)}}
\expectation(F){T_A(F,\varepsilon)},\] 
leading to stronger lower bounds: Notice that in this case the 
algorithm has full knowledge about the instance distribution.
We will also bound the \emph{high-probability oracle
complexity} defined as
\begin{align*}
\begin{multlined}
\Compl_{\mathcal{HP}}(\mathcal{O},\mathcal{F},\varepsilon) 
\\\coloneqq
\adjustlimits
\sup_{F\in\Delta(\mathcal{F})} \inf_{A\in \mathcal{A(O)}}
\sup_{\tau : \probability(F){T_A(F,\varepsilon) \geq \tau} = 1-
  o(1)} \tau, 
\end{multlined}
\end{align*}
i.e., it is the number \(\tau\) of required queries
that any algorithm needs for the worst distribution
with high probability.
It is easily seen that
\begin{multline*}
\Compl_{\mathcal{HP}}(\mathcal{O},\mathcal{F},\varepsilon)
\leq
\Compl_{\mathcal{D}}(\mathcal{O},\mathcal{F},\varepsilon)\\
\leq \Compl_{\mathcal{R}}(\mathcal{O},\mathcal{F},\varepsilon)
\leq \Compl_{\mathcal{WC}}(\mathcal{O},\mathcal{F},\varepsilon),
\end{multline*}
but it is open
for which families \(\mathcal{F}\) this
inequality chain is tight, e.g., whether Yao's min-max principle 
applies (see e.g., \cite{AroraBarakBook}),
as both \(\mathcal{F}\) and \(\mathcal{A}\) might be
\emph{a priori} infinite families. However it is known that
worst-case and randomized complexity are
equal up to a factor logarithmic in the dimension
for several cases (see \cite[4.4.3 Proposition
2]{Nemirovski:1983}). 

\subsection{Algorithm-oracle communication and string operations}

For a given oracle-based (not necessarily minimization) algorithm,
we record the communication between the algorithm and the oracle.
Let \(Q_{t}\) be the \(t\)-th query of the algorithm
and \(A_{t}\) be the \(t\)-th oracle answer.
Thus \(\Pi_{t} \coloneqq (Q_{t}, A_{t})\) is
the \(t\)-th query-answer pair.
The full transcript of the communication is denoted by
$\Pi = (\Pi_1,\Pi_2,\dotsc)$, and for given \(t\geq 0\)
partial transcripts are defined as \(\Pi_{\leq t}
\coloneqq (\Pi_1,\dotsc,\Pi_t)\) and \(\Pi_{< t} \coloneqq
(\Pi_1,\dotsc,\Pi_{t-1})\).
By convention, \(\Pi_{< 1}\) and \(\Pi_{\leq 0}\) are
empty sequences.

As we will index functions by strings,
let us introduce the necessary string operations.
Let \(s \in \binSet^*\) be a binary string, then
\(\flipCut{i}{s}\) denotes the string obtained from \(s\) by flipping
the \(i\)-th bit and deleting all bits following the \(i\)-th one.
Let \(s \sqsubseteq t\) denote that \(s\) is a prefix of \(t\)
and \(s \parallel t\) denote that neither is a prefix of the other.
As a
shorthand let \(\prefix{l}{s}\)
be the prefix of \(s\) consisting of the first \(l\) bits.
We shall write \(s0\) and \(s1\) for the strings obtained by
appending a \(0\) and \(1\) to \(s\), respectively.
Furthermore, the
empty string is denoted by \(\bot\). In the following we use the
shorthand notation \([n] \coloneqq \set{1,\dots,n}\).

\subsection{Information Theory}

Notions from information theory are standard as defined in
\cite{cover2006elements}; we recall here those we need later.
From now on, \(\log(.)\) denotes the binary logarithm and
capital letters will typically represent random variables or events.
We can describe an event \(E\) as a random variable by the 
indicator function \(I(E)\), which takes value 1 if \(E\) happens, 
and 0 otherwise.

The \emph{entropy} of a discrete random variable
\(A\) is
\[\entropy{A} \coloneqq - \sum_{a \in \range(A)} \probability{A=a}
\log \probability{A=a}.\]
This definition extends naturally to \emph{conditional entropy}
\(\entropy[B]{A}\)
by using conditional distribution and taking
expectation, i.e., \(\entropy[B]{A} =
\sum_{b} \probability{B = b}{\entropy[B = b]{A}}\).

\begin{fact}[Properties of entropy]
  \mbox{}
  \begin{description}
  \item[Bounds]
    \(0 \leq \entropy{A} \leq \log \size{\range(A)}\)

    \(\entropy{A} = \log \size{\range(A)}\)
    if and only if \(A\) is uniformly distributed.
  \item[Monotonicity]
    \(\entropy{A} \geq \entropy[B]{A}\);
  \end{description}
\end{fact}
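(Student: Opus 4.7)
The plan is to handle the two properties separately, using only elementary facts about the logarithm and Jensen's inequality.

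First I would prove the bounds. The lower bound $\entropy{A} \geq 0$ follows immediately from the definition: each summand $-\probability{A=a}\log \probability{A=a}$ is non-negative since $\probability{A=a} \in [0,1]$ forces $\log \probability{A=a} \leq 0$, using the convention $0 \log 0 = 0$. For the upper bound, I would apply Jensen's inequality to the strictly concave function $\log$: writing $p_{a} \coloneqq \probability{A=a}$ and restricting the sum to the support of $A$,
\[
\entropy{A} = \sum_{a} p_{a} \log \frac{1}{p_{a}} \leq \log \Bigl(\sum_{a} p_{a} \cdot \frac{1}{p_{a}}\Bigr) = \log \size{\range(A)}.
\]
Strict concavity of $\log$ then yields equality exactly when $1/p_{a}$ is constant over the support, i.e., when $A$ is uniformly distributed on $\range(A)$, which is precisely the stated condition for equality.

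For monotonicity, I would reduce to the case already handled. By definition, $\entropy[B]{A} = \sum_{b} \probability{B=b}\, \entropy[B=b]{A}$, so it suffices to show $\entropy{A} \geq \entropy[B]{A}$. The cleanest route is to recognize the difference as the mutual information $\mutualInfo{A}{B} = \entropy{A} - \entropy[B]{A}$ and to rewrite it as the Kullback--Leibler divergence between the joint law of $(A,B)$ and the product of its marginals. Non-negativity of KL divergence (Gibbs' inequality), obtained once more by Jensen's inequality applied to the convex function $-\log$, then yields $\mutualInfo{A}{B} \geq 0$ and closes the argument.

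The common technical ingredient throughout is Jensen's inequality for the logarithm; both parts ultimately reduce to it, and no step presents a genuine obstacle. The only mild care needed is handling the support of $A$ (so that division by $p_{a}$ is well defined) and being consistent with the $0\log 0 = 0$ convention.
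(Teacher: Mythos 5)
Your proposal is correct, and it is the standard textbook argument. Note that the paper does not actually prove this Fact at all --- it is recalled without proof from the standard reference \cite{cover2006elements} --- so there is no in-paper proof to compare against; your derivation (nonnegativity term by term, the upper bound and its equality case via strict Jensen for \(\log\), and monotonicity via \(\mutualInfo{A}{B} \geq 0\) as a KL divergence) is exactly the canonical justification, with the only minor caveat being that the equality case requires \(\range(A)\) to coincide with the support of \(A\), which is how the paper implicitly uses the notation.
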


The notion of \emph{mutual information} defined as
\(\mutualInfo{A}{B} \coloneqq \entropy{A} - \entropy[B]{A}\)
of two random variables \(A\) and \(B\) 
captures how much information about a \lq{}hidden\rq{} \(A\)
is leaked by observing \(B\).
Sometimes \(A\) and \(B\) are a collection of variables,
then a comma is used to separate the
components of \(A\) or \(B\), and a semicolon to separate
\(A\) and \(B\) themselves:
e.g., \(\mutualInfo{A_{1}, A_{2}}{B} = \mutualInfo{(A_{1}, A_{2})}{B}\).
Mutual information is a symmetric quantity and
naturally extends to \emph{conditional mutual information}
\(\mutualInfo[C]{A}{B}\) as in the case of entropy. Clearly, 
\(\entropy{A} = \mutualInfo{A}{A}\).

\begin{fact}[Properties of mutual information]
  \mbox{}
  \begin{description}
  \item[Bounds]
    If \(A\) is a discrete variable, then
    \(0 \leq \mutualInfo{A}{B} \leq \entropy{A}\)
  \item[Chain rule]
    \(\mutualInfo{A_1,A_2}{B} =
    \mutualInfo{A_1}{B} + \mutualInfo[A_1]{A_2}{B}\).
  \item[Symmetry] \(\mutualInfo{A}{B} = \mutualInfo{B}{A}\).
  \item[Independent variables]
    The variables \(A\) and \(B\) are independent
    if and only if
      \(\mutualInfo{A}{B} = 0\).
  \end{description}
\end{fact}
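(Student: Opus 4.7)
The plan is to derive all four properties from the definition \(\mutualInfo{A}{B} \coloneqq \entropy{A} - \entropy[B]{A}\) together with the entropy facts already stated in the preceding fact, with the entropy chain rule \(\entropy{A,B} = \entropy{A} + \entropy[A]{B}\) serving as the main workhorse; only the independence characterization will require a step beyond pure algebra.

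For the \textbf{bounds}, I would observe that non-negativity is exactly the monotonicity clause of the previous fact (\(\entropy[B]{A} \leq \entropy{A}\)), and the upper bound \(\mutualInfo{A}{B} \leq \entropy{A}\) follows immediately from \(\entropy[B]{A} \geq 0\). The latter is itself a consequence of \(\entropy[B=b]{A} \geq 0\) for each fixed value \(b\) (non-negativity from the bounds clause applied pointwise) together with averaging over \(B\).

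For the \textbf{chain rule} I would simply expand both sides. The left-hand side equals \(\entropy{A_1,A_2} - \entropy[B]{A_1,A_2}\); applying the entropy chain rule to each joint term gives \(\entropy{A_1} + \entropy[A_1]{A_2} - \entropy[B]{A_1} - \entropy[A_1,B]{A_2}\). Pairing the first with the third term reproduces \(\mutualInfo{A_1}{B}\), and the remaining two terms give \(\mutualInfo[A_1]{A_2}{B}\). \textbf{Symmetry} follows from the same manipulation: rewriting \(\mutualInfo{A}{B} = \entropy{A} + \entropy{B} - \entropy{A,B}\) via the entropy chain rule makes the expression manifestly symmetric in \(A\) and \(B\).

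The step I expect to be the \textbf{main obstacle} is the independence characterization, as it is the only item that is not a pure algebraic rearrangement of the entropy identities. I would rewrite \(\mutualInfo{A}{B}\) in the relative-entropy form \(\sum_{a,b} p(a,b) \log \bigl( p(a,b) / (p(a)\, p(b)) \bigr)\) (verified by expanding each logarithm and collapsing the marginal sums back into \(\entropy{A}\), \(\entropy{B}\), \(\entropy{A,B}\)), and then appeal to Gibbs' inequality, equivalently Jensen applied to the convex function \(-\log\), to conclude both that the sum is non-negative and that equality is attained if and only if \(p(a,b) = p(a)\,p(b)\) wherever the marginals are positive, which is precisely independence of \(A\) and \(B\). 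This is the one place where a convexity fact outside the entropy axioms listed above is genuinely needed.
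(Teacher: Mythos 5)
Your proof is correct. Note, however, that the paper offers no proof of this fact at all: it is recalled as standard material with a citation to Cover and Thomas, so there is no in-paper argument to measure yours against; your derivation is the canonical textbook one. Two remarks on self-containedness. First, the entropy chain rule \(\entropy{A,B} = \entropy{A} + \entropy[A]{B}\), which you lean on for both the chain-rule and symmetry items, is not among the entropy properties the paper lists, though it follows in one line from the definitions of joint and conditional entropy, so this is a harmless extra ingredient rather than a gap. Second, you correctly isolate the only genuinely non-algebraic step, namely the Gibbs/Jensen inequality needed for the ``only if'' direction of the independence characterization (the ``if'' direction being immediate from the relative-entropy form). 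One could in fact obtain non-negativity of \(\mutualInfo{A}{B}\) directly from that same Gibbs inequality rather than from the monotonicity clause---monotonicity of entropy under conditioning is just a restatement of \(\mutualInfo{A}{B} \geq 0\)---but since the paper lists monotonicity as a usable fact, your shortcut is legitimate and avoids duplicating the convexity argument.
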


\section{Source of hardness and oracle emulation}
\label{sec:general-lower-bounds}

We provide a general method to lower bound
the number of queries of an algorithm that 
identifies a hidden random variable. This 
method is based on information theory and 
will allow us to lower bound the
distributional and high probability oracle
complexity, even for bounded-error algorithms.
We apply this technique to the problem of
identifying a random binary string, which 
we call the String Guessing Problem. 
Finally, we introduce an oracle emulation
technique, that will allow us to compare the
complexity of different oracles solving the same
problem.

\subsection{Information-theoretic lower bounds}

We consider an unknown instance \(F\)
that is randomly chosen from
a finite family \(\mathcal{F}\) of instances.
For a given algorithm querying an 
oracle \(\mathcal{O}\), let \(T\) be the
number of queries the algorithm asks to determine the
instance. Of course, the number \(T\) may depend on 
the instance, as algorithms can adapt their queries
according to the oracle answers. However, we assume 
that \(T<\infty\) almost surely, i.e., we require 
algorithms to almost always terminate
(this is a mild assumption as \(\mathcal{F}\) is finite).

Algorithms are allowed to have an error probability 
bounded by \(P_e\), i.e., the algorithm is only required 
to return the correct answer with
probability \(1-P_e\) \emph{across all instances}. The latter
statement is important as both, being perfectly correct on a \(1-P_e\)
fraction of the input and outputting garbage in \(P_e\) cases, as well
as providing the correct answer for \emph{each instance} with
probability \(1-P_e\), are admissible here. 

For bounded-error algorithms,
the high-probability complexity is the required number of queries
to produce a correct answer with probability \(1 - P_{e} - o(1)\).
This adjustment is justified, as a wrong answer is allowed with
probability \(P_{e}\).

\begin{lemma}
\label{lem:EntropyLB}
Let $F$ be a random variable with finite range \(\mathcal{F}\).
For a given algorithm determining \(F\)
via querying an oracle, with error probability bounded by \(P_{e}\),
suppose that the useful information of
each oracle answer is bounded, i.e., for some constant $C>0$, we have
\begin{equation}\label{EntUnifBd}
  \mutualInfo[\Pi_{<t}, Q_{t}, T\geq t]{F}{A_t} \leq C,
  \qquad t\geq 0.
\end{equation}
Then, the distributional oracle complexity of the algorithm is
lower bounded by
\[ \expectation{T} \geq
   \frac{\entropy{F} -
     \bentropy{P_{e}} - P_{e} \log \size{\mathcal{F}}}{C}. \]
Moreover, for all \(t\) we have
\begin{equation*}
  \probability{T < t}
  \leq
  \frac{\bentropy{P_{e}} + P_{e} \log \size{\mathcal{F}} + C t}
  {\entropy{F}}.
\end{equation*}
In particular,
if \(F\) is uniformly distributed,
then
\(\probability{T = \Omega(\log \size{\mathcal{F}})}
= 1 - P_{e} - o(1)\).

\begin{proof}
By induction on \(t\) we will first prove the following claim
\begin{equation}
  \label{eq:mutualInfo-split-finite}
\begin{multlined}
  \mutualInfo{F}{\Pi} 
  =
    \sum_{i=1}^{t} \mutualInfo[\Pi_{<i},T\geq i]{F}{\Pi_i}
    \probability{T\geq i} \\
  + \mutualInfo[\Pi_{\leq t}, T \geq t]{F}{\Pi}
  \probability{T\geq t}.
\end{multlined}
\end{equation}

The case \(t=0\) is obvious.
For \(t > 0\),
note that
the event \(T = t\) is independent of \(F\)
given \(\Pi_{\leq t}\),
as at step \(t\)
the algorithm has to decide whether to continue
based solely on the previous oracle answers
and private random sources.
If the algorithm stops, then \(\Pi = \Pi_{\leq t}\).
Therefore,
\begin{equation*}
\begin{aligned}
  &\mutualInfo[\Pi_{\leq t}, T \geq t]{F}{\Pi} \\
 =&
  \mutualInfo[\Pi_{\leq t}, T \geq t]{F}{\Pi, I(T=t)}
  \\
  =&
  \underbrace{\mutualInfo[\Pi_{\leq t}, T \geq t]{F}{I(T=t)}}_{=0} +
  \mutualInfo[\Pi_{\leq t}, I(T=t), T \geq t]{F}{\Pi}
  \\
  =&
  \underbrace{\mutualInfo[\Pi_{\leq t}, T = t]{F}{\Pi}}_{= 0\text{, as \(\Pi_{\leq t} = \Pi\)}}
  \probability[T \geq t]{T = t} \\
  &\!+\!
  \mutualInfo[\Pi_{\leq t}, T \geq t+1]{F}{\Pi}
  \probability[T \geq t]{T \geq t+1}\\
  =&
  \Big(
  \mutualInfo[\Pi_{<t+1}, T \geq t+1]{F}{\Pi_{t+1}}\\
  &\!+\!
  \mutualInfo[\Pi_{\leq t+1},T \geq t+1]{F}{\Pi}
  \Big)
  \probability[T \geq t]{T \geq t+1},
\end{aligned}
\end{equation*}
obtaining the identity
\begin{equation*}
\begin{multlined}
\mutualInfo[\Pi_{\leq t}, T \geq t]{F}{\Pi} \probability{T \geq t}
= \\
\left(
  \mutualInfo[\Pi_{<t+1}, T \geq t+1]{F}{\Pi_{t+1}}
  \!+\!
  \mutualInfo[\Pi_{\leq t+1},T \geq t+1]{F}{\Pi}
  \right) \\
\cdot\probability{T \geq t+1},
\end{multlined}
\end{equation*}
from which the induction follows.

Now, in \eqref{eq:mutualInfo-split-finite}
by letting \(t\) go to infinity,
\(\probability{T\geq t}\)
will converge to \(0\),
while
\(\mutualInfo[\Pi_{\leq t}, T \geq t]{F}{\Pi}\)
is bounded by \(\entropy{F}\), proving that
\begin{equation}
  \label{eq:mutualInfo-split-infinite}
  \mutualInfo{F}{\Pi}
  =
  \sum_{i=1}^{\infty} \mutualInfo[\Pi_{<i},T\geq i]{F}{\Pi_i}
  \probability{T\geq i}.
\end{equation}
Note that \(Q_{i}\) is chosen solely based on \(\Pi_{<i}\), and is 
conditionally independent of \(F\). Therefore, by the chain rule,
\(
  \mutualInfo[\Pi_{<i},T\geq i]{F}{\Pi_i} =
  \mutualInfo[\Pi_{<i}, Q_{i}, T\geq i]{F}{A_i}.
\)
Plugging this equation into \eqref{eq:mutualInfo-split-infinite}, we obtain
\begin{eqnarray*}
  \mutualInfo{F}{\Pi}
  &=&
  \sum_{i=1}^{\infty} \mutualInfo[\Pi_{<i}, Q_{i}, T\geq i]{F}{A_i}
  \probability{T\geq i} \\
  &\leq& C \sum_{i=0}^{\infty} \probability{T\geq i}\\
  &=& C \cdot \expectation{T}.
\end{eqnarray*}
Finally, as the algorithm determines \(F\)
with error probability at most \(P_{e}\),
Fano's inequality \cite[Theorem~2.10.1]{cover2006elements} applies
\begin{equation}
  \label{eq:Fano}
  \entropy[\Pi]{F}
  \leq \bentropy{P_{e}} + P_{e} \log \size{\mathcal{F}}.
\end{equation}
We therefore obtain
\begin{equation*}
  \entropy{F} = \entropy[\Pi]{F} + \mutualInfo{F}{\Pi}
  \leq \bentropy{P_{e}} + P_{e} \log \size{\mathcal{F}}
  + C \cdot \expectation{T},
\end{equation*}
and therefore
\begin{equation*}
  \expectation{T}
  \geq
  \frac{\entropy{F} -
    \bentropy{P_{e}} - P_{e} \log \size{\mathcal{F}}}{C}
  ,
\end{equation*}
as claimed.

We will now establish concentration for the number of required
queries. For this we reuse \eqref{eq:mutualInfo-split-finite},
the split-up of information up to query \(t\):
\begin{equation*}
 \begin{split}
  \mutualInfo{F}{\Pi} =&
    \sum_{i=1}^{t} \mutualInfo[\Pi_{<i}, T \geq i]{F}{\Pi_i}
    \probability{T\geq i} \\
  &+ \mutualInfo[\Pi_{\leq t}, T \geq t]{F}{\Pi}
  \probability{T\geq t}
  \\
  =&
    \sum_{i=1}^{t}
    \underbrace{\mutualInfo[\Pi_{<i}, Q_{i}, T\geq
      i]{F}{A_i}}_{\leq C}
    \probability{T\geq i} \\
  &+ \underbrace{\mutualInfo[\Pi_{\leq t}, T \geq t]{F}{\Pi}}_{\leq \entropy{F}}
  \probability{T\geq t}
  \\
  \leq&\,\,  C t + \entropy{F} \probability{T\geq t},
 \end{split}
\end{equation*}
which we combine with \eqref{eq:Fano}:
\begin{equation*}
\begin{multlined}
  \entropy{F} = \entropy[\Pi]{F} + \mutualInfo{F}{\Pi} \\
  \leq \bentropy{P_{e}} + P_{e} \log \size{\mathcal{F}}
  + C t + \entropy{F} \probability{T\geq t},
\end{multlined}
\end{equation*}
and therefore
\begin{equation*}
  \probability{T < t}
  \leq
  \frac{\bentropy{P_{e}} + P_{e} \log \size{\mathcal{F}} + C t}
  {\entropy{F}}.
\end{equation*}
Specializing to uniform distributions provides
the last claim of the Lemma.
\end{proof}
\end{lemma}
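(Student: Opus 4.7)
The plan is to bound the total mutual information $\mutualInfo{F}{\Pi}$ from above by $C \cdot \expectation{T}$, bound it from below by $\entropy{F} - \bentropy{P_e} - P_e \log\size{\mathcal{F}}$ using Fano's inequality, and combine. The main subtlety, which I expect to be the principal obstacle, is that the transcript $\Pi$ has a random length $T$, so the usual chain-rule decomposition $\mutualInfo{F}{\Pi} = \sum_i \mutualInfo[\Pi_{<i}]{F}{\Pi_i}$ must be reworked so that rounds $i > T$ contribute nothing, and so that the per-round hypothesis \eqref{EntUnifBd} can be invoked with the conditioning event $T \geq i$.

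First I would establish, by induction on $t$, the split identity
\[
  \mutualInfo{F}{\Pi}
  = \sum_{i=1}^{t} \mutualInfo[\Pi_{<i}, T \geq i]{F}{\Pi_i}\,
    \probability{T \geq i}
  + \mutualInfo[\Pi_{\leq t}, T \geq t]{F}{\Pi}\,\probability{T \geq t}.
\]
The inductive step rests on two observations: once the algorithm stops at step $t$, $\Pi = \Pi_{\leq t}$, so the residual information is zero on $\{T = t\}$; and the indicator $I(T = t)$ is determined by $\Pi_{\leq t}$ together with the algorithm's private randomness, hence carries no further information about $F$ beyond $\Pi_{\leq t}$. Together these let me absorb the case $T = t$ and peel off the $(t+1)$-st round, using the chain rule to split $\mutualInfo[\Pi_{\leq t}, T\geq t+1]{F}{\Pi}$ into $\mutualInfo[\Pi_{<t+1}, T\geq t+1]{F}{\Pi_{t+1}}$ plus the tail. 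Taking $t \to \infty$, the tail vanishes because $\probability{T \geq t} \to 0$ (we assumed $T < \infty$ a.s.) while the conditional mutual information remains bounded by $\entropy{F}$, giving
\[
  \mutualInfo{F}{\Pi}
  = \sum_{i=1}^{\infty} \mutualInfo[\Pi_{<i}, T \geq i]{F}{\Pi_i}\,
    \probability{T \geq i}.
\]

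Next, since $Q_i$ is a function of $\Pi_{<i}$ and the algorithm's private randomness, it is conditionally independent of $F$ given $\Pi_{<i}$ and $T \geq i$, so by the chain rule $\mutualInfo[\Pi_{<i}, T\geq i]{F}{\Pi_i} = \mutualInfo[\Pi_{<i}, Q_i, T\geq i]{F}{A_i} \leq C$ by hypothesis \eqref{EntUnifBd}. Summing yields $\mutualInfo{F}{\Pi} \leq C \sum_{i \geq 1} \probability{T \geq i} = C\,\expectation{T}$. On the other side, Fano's inequality gives $\entropy[\Pi]{F} \leq \bentropy{P_e} + P_e \log\size{\mathcal{F}}$, so combining with $\entropy{F} = \entropy[\Pi]{F} + \mutualInfo{F}{\Pi}$ produces the expectation bound after rearrangement.

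For the tail bound I would truncate rather than pass to infinity: keep the finite-$t$ form of the split identity and bound the sum by $C t$ (using \eqref{EntUnifBd} on each of the first $t$ terms and $\probability{T \geq i} \leq 1$) and the remainder by $\entropy{F}\,\probability{T \geq t}$. Substituting into $\entropy{F} \leq \bentropy{P_e} + P_e \log\size{\mathcal{F}} + \mutualInfo{F}{\Pi}$ and solving for $\probability{T \geq t}$ gives $\probability{T < t} \leq (\bentropy{P_e} + P_e \log\size{\mathcal{F}} + Ct)/\entropy{F}$. Finally, specializing to uniform $F$ so that $\entropy{F} = \log\size{\mathcal{F}}$, taking $t = c \log\size{\mathcal{F}}/C$ with $c$ sufficiently small drives $\probability{T < t} \leq P_e + o(1)$, which is exactly the claimed $\probability{T = \Omega(\log\size{\mathcal{F}})} = 1 - P_e - o(1)$.
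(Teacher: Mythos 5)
Your proposal is correct and follows essentially the same route as the paper's proof: the same induction establishing the split identity with the $T\geq i$ conditioning and $\probability{T\geq i}$ weights, the same passage to the infinite sum, the same chain-rule step replacing $\Pi_i$ by $A_i$ given $Q_i$, Fano's inequality for the lower bound, and the same finite-$t$ truncation for the concentration statement. No gaps.
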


\subsection{Identifying binary strings}
\label{sec:ident-binary-strings}

For a fixed length \(M\) we consider the problem of identifying 
a hidden string \(S \in \{0,1\}^M\) picked uniformly at random.
The oracle \(\mathcal{O}_{S}\) accepts
queries for any part of the string.
Formally, a query is a pair \((s, \sigma)\),
where \(s\) is a string of length at most \(M\),
and \(\sigma \colon [\card{s}] \to [M]\) is an embedding
indicating an order of preference.
The intent is to ask whether \(S_{\sigma(k)} = s_{k}\) for all \(k\).
The oracle will
reveal the smallest \(k\) so that \(S_{\sigma(k)} \neq
s_k\) if such a
\(k\) exists or will assert correctness of
the guessed part of the string.
More formally we have:

\begin{orac}[String Guessing Oracle \(\mathcal{O}_{S}\)]
  \label{orac:idStringConf}
  \mbox{}
  \begin{description}[font={\normalfont\itshape}, nosep]
  \item[Query:] A string \(s \in \{0,1\}^{\leq M}\)
    and an injective function \(\sigma \colon [\card{s}] \to [M]\).
  \item[Answer:] Smallest \(k \in \N\) so that
    \(S_{\sigma(k)} \neq s_k\) if it exists,
    otherwise \texttt{EQUAL}.
  \end{description}
\end{orac}

From Lemma \ref{lem:EntropyLB} we establish an expectation
and high probability lower bound on the number of queries,
even for bounded error algorithms.
The key is that the oracle does not reveal any information
about the bits after the first wrongly guessed bit,
not even involuntarily.

\begin{proposition}[String Guessing Problem]
  \label{prop:string-via-oracle}
  Let \(M\) be a positive integer,
  and \(S\) be a uniformly random binary string of length \(M\).
  Let \(\mathcal{O}_{S}\) be the String Guessing Oracle
  (Oracle~\ref{orac:idStringConf}).
  Then for any bounded error algorithm
  having access to \(S\) only through \(\mathcal{O}_{S}\),
  the expected number of queries required to identify \(S\)
  with error probability at most \(P_{e}\)
  is at least \([(1 - P_{e}) M - 1] / 2\).
  Moreover,
  \(\probability{T = \Omega(M)} = 1 - P_{e} - o(1)\),
  where \(T\) is the number of queries.
\begin{proof}
We will prove the following claim by induction: At any step \(t\),
given the partial transcript \(\Pi_{< t}\),
some bits of \(S\) are totally determined,
and the remaining ones are still uniformly distributed.
The claim is obvious for \(t = 0\). Now
suppose that the claim holds for some \(t-1\geq 0\).
The next query \(Q_t \coloneqq (s;\sigma)\) is independent of \(S\)
given \(\Pi_{< t}\).
Let us fix \(\Pi_{< t}\) and \((s;\sigma)\),
and implicitly condition on them
until stated otherwise.
We differentiate two cases. 
\begin{casesblock}
\begin{case}{The oracle answer is \texttt{EQUAL}.}
This is the case if and only if \(s_{\ell} =
S_{\sigma({\ell})}\) for all \(\ell \in [\card{s}]\). Thus 
the oracle answer reveals the bits \(\{S_{\sigma(\ell)} \mid \ell \in
[\card{s}]\}\), actually determining them.
\end{case}
\begin{case}{The oracle answer is \(k\).}
This is the case if and only if
\(s_{j} = S_{\sigma(j)}\) for all \(j < k\) and \(s_{k} \neq S_{\sigma(k)}\).
Thus the oracle answer reveals \(\{S_{\sigma(\ell)} \mid \ell \in
[k]\}\) (the \(k\)-th bit by flipping), determining them.
\end{case}
\end{casesblock}
In both cases,
the answer is independent of the other bits,
therefore
the ones among them,
which are not determined by previous oracle answers,
remain uniformly distributed
and mutually independent.
This establishes the claim for \(\Pi_{t}\), finishing the induction.

We extend the analysis to estimate the mutual information
of \(S\) and the oracle answer \(A_{t}\).
We keep \(\Pi_{<t}\) and \(Q_t\) fixed,
and implicitly assume \(T \geq t\),
as otherwise \(Q_{t}\) and \(A_{t}\) don't exist.
For readability, we drop the conditions in the computations
below; all quantities are to be considered conditioned
on \(\Pi_{<t}\), \(Q_t\) provided \(T \geq t\).

Let \(m \coloneqq \entropy{S}\) be
the number of undetermined bits just before query \(t\).
Let \(K\) be the number of additionally determined bits due to query
\(Q_t\) and oracle answer \(A_t\), hence
obviously \[\entropy[A_{t}]{S} = \expectation{m - K}.\]

The analysis above shows that for all \(k \geq 1\),
a necessary condition for \(K \geq k\) is
that \(s_{j} = S_{\sigma(j)}\) for
the \(k-1\) smallest \(j\)
with \(S_{\sigma(j)}\) not determined before query \(t\) and that
these \(k-1\) smallest \(j\) really exist.
The probability of this condition is \(1/2^{k-1}\)
(or \(0\) if there are not sufficiently many \(j\)) and so in any case
we have 
\begin{equation*}
  \probability{K \geq k} \leq \frac{1}{2^{k-1}}, \qquad k \geq 1.
\end{equation*}

Combining these statements we see that,
\begin{equation*}
\begin{multlined}
  \mutualInfo{S}{A_t}
  =
  \entropy{S} -
  \entropy[A_{t}]{S}
  =
  m -
  \expectation{m - K}\\
  =
  \expectation{K}
  =
  \sum_{i \in [m]}
  \probability{K \geq i}
  \leq \sum_{i \in [\infty]} \frac{1}{2^{i-1}} = 2,
\end{multlined}
\end{equation*}
with \(\Pi_{< t}, Q_{t}\) still fixed.

Now we re-add the conditionals, vary \(\Pi_{< t}, Q_{t}\),
and take expectation still assuming \(T \geq t\), obtaining
\begin{equation*}
  \mutualInfo[\Pi_{<t}, Q_{t}, T\geq t]{S}{A_t} \leq 2
\end{equation*}
where \(T\) is the number of queries.
By Lemma~\ref{lem:EntropyLB} we obtain
\(\expectation{T} \geq [(1 - P_{e}) M - \bentropy{P_{e}}] / 2
\geq [(1 - P_{e}) M - 1] / 2\) (the binary entropy
is upper bounded by 1)
and
\(\probability{T = \Omega(M)} = 1 - P_{e} - o(1)\),
as claimed.
\end{proof}
\end{proposition}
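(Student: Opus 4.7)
The plan is to apply Lemma~\ref{lem:EntropyLB} with $F = S$ and a per-query information bound of $C = 2$. Since $S$ is uniform, $\mathbb{H}[S] = M = \log|\{0,1\}^M|$, so plugging in $C = 2$ yields $\mathbb{E}[T] \geq (M - \mathbb{H}[P_e] - P_e M)/2 \geq [(1-P_e)M - 1]/2$ directly, and the high-probability statement follows from the concentration part of the lemma. So the whole proof reduces to verifying the uniform information bound
\[
\mutualInfo[\Pi_{<t}, Q_t, T \geq t]{S}{A_t} \leq 2
\qquad \text{for all } t \geq 1.
\]

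To establish this, I would first prove by induction on $t$ a structural invariant: conditional on any realization of $\Pi_{<t}$, a deterministic subset of positions of $S$ has been revealed exactly, and the remaining coordinates of $S$ are uniform and independent of everything. The base case is immediate. For the inductive step, fix $\Pi_{<t}$ and the next query $Q_t = (s,\sigma)$ (which is a function of $\Pi_{<t}$ and the algorithm's private randomness, hence conditionally independent of the undetermined bits). The oracle either answers \texttt{EQUAL}, pinning down $\{S_{\sigma(\ell)} : \ell \in [|s|]\}$ to the guessed values, or answers some index $k$, which pins down $S_{\sigma(1)}, \dots, S_{\sigma(k-1)}$ to the guessed values and $S_{\sigma(k)}$ to the flipped value of $s_k$. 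In every case, the event described is a function only of the bits $S_{\sigma(\ell)}$ it determines, so conditioning on the answer leaves the remaining undetermined bits uniform and independent, preserving the invariant.

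With the invariant in hand, I would bound the mutual information of $A_t$ with $S$ by computing $\mathbb{E}[K]$, where $K$ is the number of newly determined bits contributed by query $t$. Under the conditioning, for $K \geq k$ to occur the oracle must at least confirm that the $k-1$ smallest positions $\sigma(j)$ not yet determined match their guesses $s_j$; by the uniformity/independence of the undetermined bits this happens with probability at most $2^{-(k-1)}$. Therefore
\[
\mutualInfo[\Pi_{<t},Q_t]{S}{A_t} = \mathbb{E}[K] = \sum_{k \geq 1} \probability{K \geq k} \leq \sum_{k \geq 1} 2^{-(k-1)} = 2,
\]
and averaging over $\Pi_{<t}, Q_t$ (with the convention $T \geq t$) gives the desired uniform bound.

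The main subtlety I expect is making precise that the oracle answer truly reveals no information beyond the $K$ flagged bits: one needs the invariant to be strong enough that even the \emph{shape} of the answer (i.e., whether it is \texttt{EQUAL} or an integer $k$) is itself encoded in those bits, which is why the invariant must assert that the undetermined bits are independent of the full conditioning $\Pi_{<t}$ and not merely uniform marginally. Once that is pinned down, the geometric-series tail bound is routine, and the two conclusions of the proposition then follow from Lemma~\ref{lem:EntropyLB} with $\mathbb{H}[S] = M$, $|\mathcal{F}| = 2^M$, $C = 2$, and $\mathbb{H}[P_e] \leq 1$.
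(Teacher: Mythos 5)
Your proposal is correct and follows essentially the same route as the paper: the same inductive invariant (determined bits plus uniform, independent remainder), the same bound \(\probability{K\geq k}\leq 2^{-(k-1)}\) giving \(\expectation{K}\leq 2\), and the same invocation of Lemma~\ref{lem:EntropyLB} with \(C=2\). The subtlety you flag—that the shape of the answer must itself be a function of the revealed bits—is exactly the point the paper's case analysis is designed to handle.
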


\subsection{Oracle emulation}
\label{sec:oracle-emulation}

In this section we introduce \emph{oracle emulation}, 
which is a special type of reduction from one oracle to 
another, both for the same family of instances. This 
reduction allows to transform algorithms based on one 
oracle to the other preserving their oracle complexity,
i.e, the number of queries asked.
The crucial result is Lemma~\ref{lem:emulator-complexity},
which we will apply to emulations of 
various convex optimization oracles
by the String Guessing Oracle \(\mathcal{O}_{S}\).

\begin{definition}[Oracle emulation]
  \label{def:oracle-emulation}
  Let \(\mathcal{O}_{1} \colon Q_{1} \to R_{1}\)
  and \(\mathcal{O}_{2} \colon Q_{2} \to R_{2}\)
  be two oracles
  for the same problem.
  An \emph{emulation} of \(\mathcal{O}_{1}\) by \(\mathcal{O}_{2}\)
  consists of
  \begin{enumerate}
  \item
    a query emulation function
    \(q\colon \dom \mathcal{O}_{1} \to \dom \mathcal{Q}_{2}\)
    (translating queries of \(\mathcal{O}_{1}\)
    for \(\mathcal{O}_{2}\)),
  \item
    an answer emulation function
    \(a\colon Q_{1} \times \operatorname{cod} \mathcal{O}_{2} \to
    \operatorname{cod} \mathcal{O}_{1}\)
    (translating answers back)
  \end{enumerate}
  such that \(\mathcal{O}_{1}(x) = a(x, \mathcal{O}_{2}(q(x)))\)  for
  all \(x\in Q_{1}\).
  Here \(\dom \mathcal{O}\) and \(\operatorname{cod} \mathcal{O}\)
  denote the set of queries and answers of oracle \(\mathcal{O}\).
\end{definition}

An emulation leads to a reduction, 
since emulated oracles are at least as complex
as the emulating ones.

\begin{lemma}
  \label{lem:emulator-complexity}
  If there is
  an emulation of \(\mathcal{O}_{1}\) by \(\mathcal{O}_{2}\),
  then the oracle complexity of
  \(\mathcal{O}_{1}\) is at least that of \(\mathcal{O}_{2}\).
  Here oracle complexity can be
  worst-case, randomized, distributional, and high probability;
  all even for bounded-error algorithms.
\begin{proof}
Let \(A_{1}\) be an algorithm using \(\mathcal{O}_{1}\),
and let \(\mathcal{O}_{2}\) emulate \(\mathcal{O}_{1}\).
Let \(q\) and \(a\) be the query emulation function
and the answer emulation function, respectively.
We define an algorithm \(A_{2}\) for \(\mathcal{O}_{2}\)
simulating \(A_{1}\) as follows:
Whenever \(A_{1}\) asks a query \(x\) to oracle 
\(\mathcal{O}_{1}\), oracle \(\mathcal{O}_{2}\) is queried 
with \(q(x)\), and the simulated \(A_{1}\) receives as answer 
\(a(x, \mathcal{O}_{2}(q(x)))\)
(which is \(\mathcal{O}_{1}(x)\) by definition of the emulation).
Finally, the return value of the simulated \(A_{1}\) is returned.

Obviously, \(A_{2}\) makes the same number of queries as \(A_{1}\)
for every input, and therefore the two algorithms have the same
oracle complexity.
This proves that the oracle complexity of \(\mathcal{O}_{1}\)
is at least that of \(\mathcal{O}_{2}\).
\end{proof}
\end{lemma}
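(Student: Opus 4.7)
The plan is to take an arbitrary algorithm $A_1$ that uses $\mathcal{O}_1$ and manufacture an algorithm $A_2$ that uses $\mathcal{O}_2$ such that, on every instance, $A_2$ issues exactly the same number of oracle queries as $A_1$ and returns exactly the same output. Since the mapping $A_1 \mapsto A_2$ is defined pointwise on algorithms (no probabilistic averaging needed), any lower bound on the complexity of $\mathcal{O}_2$ transfers to a lower bound on the complexity of $\mathcal{O}_1$ for each of the four complexity notions.

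The construction is the one suggested by the definition of emulation: $A_2$ runs $A_1$ as a black-box subroutine, and whenever $A_1$ wishes to issue a query $x \in Q_1$ to $\mathcal{O}_1$, $A_2$ instead issues the query $q(x) \in Q_2$ to $\mathcal{O}_2$, receives some answer $r \in R_2$, and returns $a(x, r)$ to the simulated $A_1$ in place of $\mathcal{O}_1(x)$. By the defining identity $\mathcal{O}_1(x) = a(x, \mathcal{O}_2(q(x)))$ in Definition~\ref{def:oracle-emulation}, the value passed back to $A_1$ coincides with the answer a genuine $\mathcal{O}_1$ would have produced, so the simulated execution is indistinguishable from a real execution of $A_1$ against $\mathcal{O}_1$ on the same instance $f$. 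In particular, for every $f$ one has $T_{A_2}(f,\varepsilon) = T_{A_1}(f,\varepsilon)$ and $A_2$ outputs exactly what $A_1$ outputs, so the two algorithms have matching correctness probabilities on every instance (this is what lets the statement carry over to bounded-error algorithms).

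The remaining work is to push this per-instance equality through the four complexity definitions. For worst-case and distributional complexity, the equality $T_{A_2}(\cdot,\varepsilon) = T_{A_1}(\cdot,\varepsilon)$ means that the infimum defining $\mathrm{Compl}(\mathcal{O}_2,\mathcal{F},\varepsilon)$ is taken over a class of candidates that, via the emulation, contains a copy of every $A_1$; hence $\mathrm{Compl}(\mathcal{O}_2,\mathcal{F},\varepsilon) \leq \mathrm{Compl}(\mathcal{O}_1,\mathcal{F},\varepsilon)$. For randomized complexity, a distribution over $A_1$ pushes forward, via $A_1 \mapsto A_2$, to a distribution over $A_2$ with the same expected query count on every instance. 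For high-probability complexity, the random variables $T_{A_1}(F,\varepsilon)$ and $T_{A_2}(F,\varepsilon)$ have identical distributions for each $F$, so the threshold $\tau$ achievable by $A_2$ is at least that achievable by $A_1$.

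The proof involves no real obstacle; the only subtlety is keeping the quantifier order straight in the distributional and high-probability definitions, where the supremum over $F \in \Delta(\mathcal{F})$ sits outside the infimum over algorithms. This is harmless because the construction $A_1 \mapsto A_2$ does not depend on $F$, so for every fixed $F$ the inner infimum over $\mathcal{A}(\mathcal{O}_2)$ is bounded above by the corresponding quantity for $\mathcal{O}_1$, and taking $\sup_F$ preserves the inequality.
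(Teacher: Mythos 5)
Your proposal is correct and follows exactly the paper's argument: simulate $A_1$ under $\mathcal{O}_2$ by translating queries with $q$ and answers with $a$, note the per-instance equality of query counts and outputs, and push that equality through each complexity definition. The only difference is that you spell out the quantifier bookkeeping for the four complexity measures more explicitly than the paper does, which is a harmless elaboration of the same proof.
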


\section{Single-coordinate oracle complexity for the box}
\label{sec:oracle-compl-box}
In the following we will analyze a simple class of oracles,
called \lq{}single-coordinate\rq{},
closely mimicking the string guessing oracle.
Later, all results will be carried over to
general local oracles via perturbation
in Section \ref{sec:oracle_ind_LB}. 

In this Section and onwards, for convenience, we use the notation 
$\nabla f(x)$ for an arbitrary subgradient of $f$ at $x$. It should be noted 
however this is not necessarily the gradient, as the function may 
not be differentiable at the point.

\begin{definition}[Single-coordinate oracle]
  A first-order oracle \(\tilde{\mathcal{O}}\) is
  \emph{single-coordinate}
  if for all \(x \in X\) the subgradient 
   \(\nabla f(x)\) in its answer
  is the one supported on the least coordinate axis; i.e.,
  \(\nabla f(x) = \lambda e_i\)
  for the smallest \(1 \leq i \leq n\)
  with some \(\lambda \in \RR\).
\end{definition}

Choosing 
the smallest possible \(i\) corresponds to
choosing the first wrong bit by the String Guessing Oracle.
Not all function families possess a single-coordinate oracle,
but maximum of coordinate functions do,
and single-coordinate oracles are a natural choice
for them. From now on, we will 
denote single-coordinate oracles exclusively by
\(\tilde{\mathcal{O}}\).

We establish a lower bound on the
\emph{distributional} and \emph{high probability oracle 
complexity} for
nonsmooth convex optimization over \([-R,+R]^n\),
for single-coordinate oracles.

\begin{theorem}
  \label{th:box-nonsmooth-lower-bound}
  Let \(L, R > 0\).
  There exists a finite family \(\mathcal{F}\) of
  Lipschitz-continuous convex functions on the $L^{\infty}$-ball
  \(B_{\infty}(0,R)\) with Lipschitz constant $L$ in the
  $L^{\infty}$ norm, and a single-coordinate
  local oracle \(\tilde{\mathcal{O}}\),
  such that both the distributional
  and the high-probability oracle complexity
  for finding an \(\varepsilon\)-minimum
  of a uniformly random instance
  is \(  \Omega \left( n \log \frac{LR}{\varepsilon} \right) \).
  
  For bounded-error algorithms with error bound \(P_e\),
  the distributional complexity is
   \(  \Omega \left( (1-P_e)n \log \frac{LR}{\varepsilon}\right) \),
  and the high-probability complexity is
  \(\Omega\left( n \log \frac{LR}{\varepsilon} \right)\).
\end{theorem}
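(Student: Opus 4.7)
The plan is to reduce the box optimization problem to the String Guessing Problem via oracle emulation, so that Lemma~\ref{lem:emulator-complexity} and Proposition~\ref{prop:string-via-oracle} combine to yield the claimed lower bound. First I would construct a finite family $\mathcal{F} = \{f_s : s \in \binSet^M\}$ with $M = n\lfloor \log_2(LR/(2\varepsilon))\rfloor = \Theta(n\log(LR/\varepsilon))$, where $f_s$ is a maximum of single-coordinate piecewise-linear functions. Writing $s$ as $(s_{i,j})_{i\in[n],\,j\in[k]}$ with $k = M/n$, the block $s_{i,\bullet}$ determines a candidate minimizer coordinate $c_i(s) \in [-R,R]$ by dyadic expansion, with the $2^k$ candidates spaced at least $2\varepsilon/L$ apart. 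Convexity and $L$-Lipschitz continuity in $\norm[\infty]{\cdot}$ are immediate from the max-of-axis-aligned-affines form, and the packing property (Definition~\ref{def:packing}) holds because distinct strings produce minimizers at $\norm[\infty]{\cdot}$-distance at least $2\varepsilon/L$, so the $\varepsilon$-minimum balls of radius $\varepsilon/L$ are pairwise disjoint.

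Next I would specify the single-coordinate oracle $\tilde{\mathcal{O}}$ and construct its emulation by $\mathcal{O}_S$. For each query $x$, I would define a bit-ordering $\sigma(x)$ of the $M$ positions and a guess $s^g(x)\in\binSet^M$ such that the SGP oracle's response---either a first-mismatch index in $[M]$ or \texttt{EQUAL}---completely determines both $f_s(x)$ and the single-coordinate subgradient at $x$. The guiding design is to order the $(i,j)$-pairs by the magnitude of the associated axis-aligned affine piece at $x$ (largest first) and to pick the guess so that a correct guess stays \emph{below} the max while a wrong guess inflates it; then the first wrong guess becomes the argmax, its position identifies the active coordinate $i^*$, and its bit value identifies the sign of the subgradient along $e_{i^*}$. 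Function value and subgradient at $x$ are then deterministic functions of the SGP answer, which is exactly the requirement of Definition~\ref{def:oracle-emulation}.

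With the emulation in place, Lemma~\ref{lem:emulator-complexity} transfers the lower bound on $\mathcal{O}_S$ to $\tilde{\mathcal{O}}$ in all four complexity senses, even for bounded-error algorithms. Proposition~\ref{prop:string-via-oracle} then gives $\Omega((1-P_e)M) = \Omega((1-P_e)n\log(LR/\varepsilon))$ for the distributional complexity with error bound $P_e$, and $\Omega(M)$ queries with probability $1-P_e-o(1)$ for the high-probability version; the packing property ensures that an $\varepsilon$-minimum of $f_s$ identifies $s$ and hence converts a solver of the optimization problem into a solver of SGP with the same error. The main obstacle is getting the emulation right: the naive family $f_s(x) = L\max_i \lvert x_i - c_i(s)\rvert$ fails because the function value $L\lvert x_{i^*}-c_{i^*}(s)\rvert$ already pins down $c_{i^*}(s)$ exactly (all $k$ bits), which via the SGP reduction would only yield the weaker $\Omega(n)$ bound. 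A multi-scale piecewise-linear construction---where each query isolates exactly one bit of discrepancy in the first-mismatch sense of the SGP oracle---is needed so that each oracle answer carries only the $O(1)$ bits of mutual information quantified in Proposition~\ref{prop:string-via-oracle}.
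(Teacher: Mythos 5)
Your overall architecture is exactly the paper's: build a string-indexed family of max-of-coordinate functions, establish the packing property, emulate a single-coordinate oracle by the String Guessing Oracle, and then invoke Lemma~\ref{lem:emulator-complexity} together with Proposition~\ref{prop:string-via-oracle}. You also correctly diagnose the fatal flaw of the naive family $L\max_i\abs{x_i-c_i(s)}$: a single function value reveals all $k$ bits of $c_{i^*}(s)$ at once. But the proof stops precisely where the real work begins. You assert that ``a multi-scale piecewise-linear construction \dots is needed'' without constructing one, and that construction is the substance of the theorem. The paper builds it recursively in one dimension: nested intervals $I_s$ shrinking by a factor of $4$ per level, with $f_{s0},f_{s1}$ agreeing with $f_s$ outside $\interior{I_s}$ and replacing it inside by a shallower V (slope halved, minimum lowered by roughly $2^{-3\abs{s}}$), so that the level sets $\set{f_s < b_l}$ encode exactly the length-$l$ prefixes. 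Verifying that this family is convex and $1$-Lipschitz, that it satisfies the packing property with $M=\lfloor(1/3)\log(1/\varepsilon)\rfloor$ bits per coordinate (note the $1/3$, forced by the cubic decay of the value gap; your count $k=\lfloor\log_2(LR/2\varepsilon)\rfloor$ with spacing $2\varepsilon/L$ belongs to the naive family and would not survive the multi-scale modification), and above all that the function value and subgradient at any query are determined by, and determine no more than, a prefix of the hidden string up to its first disagreement with a query-dependent guess (the content of Lemma~\ref{lem:answer-nonsmooth-1-dim}) is the technical core. Without it there is no emulation to feed into Lemma~\ref{lem:emulator-complexity}.

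A second, smaller gap: even granting the one-dimensional construction, the multidimensional emulation is not just ``order the affine pieces by magnitude.'' One must interleave the per-coordinate prefix chains into a single SGP query via a confidence order on the labels $(i,h)$, discard the labels whose value falls below the current candidate maximum (these are never revealed by a local oracle and must not appear in the guess, or the information accounting breaks), and then check that the resulting answer map is both \emph{local} (so that $\tilde{\mathcal{O}}$ is a legitimate local oracle for $\mathcal{F}$, not an artifact of the reduction) and single-coordinate. Your sketch gestures at the right ordering but does not address locality of the answer emulation, which the paper handles by showing the revealed prefix $p$ and active coordinate $j$ depend only on the behavior of $f_S$ near $x$.
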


In the following we will restrict ourselves to the case \(L=R=1\),
as the theorem reduces to it via an easy scaling argument.
We start with the one dimensional case
in Section~\ref{subsec:one_dim_case}
for a simple presentation of the main ideas.
We generalize to multiple dimensions
in Section~\ref{sec:mult-case}
by considering maxima of coordinate functions,
thereby using the different coordinates to represent different portions
of a string.

\subsection{One dimensional case} \label{subsec:one_dim_case}

Let $X \coloneqq [-1,1]$,
we define recursively a function family $\mathcal{F}$ on \(X\),
which is inspired by the one in \cite[Lemma 1.1.1]{Nemirovski:1994}.
For an interval \(I = [a,b]\),
let \(I(t) \coloneqq a + (1 + t) (b - a) / 2\) denote
the \(t\)-point on \(I\) for \(-1 \leq t \leq 1\), e.g., \(I(-1)\) is
the left end point \(a\) of \(I\),
and
\(I(+1)\) is the right end point \(b\),
and \(I(0)\) is the midpoint.
Let \(I[t_{1}, t_{2}]\) denote the subinterval
\([I(t_{1}), I(t_{2})]\).
The family \(\mathcal{F} = \{f_{s}\}_{s}\) will be indexed by
binary strings \(s\) of length \(M\),
where \(M \in \N\) depends on the accuracy \(\varepsilon\)
and will be chosen later.
It is convenient to define \(f_{s}\) also for shorter strings,
as we proceed by recursion on the length of \(s\).
We also define intervals \(I_{s}\)
and breakpoints \(b_{l}\) of the range of the functions
satisfying the following properties:

\begin{enumerate}[label=(F-\arabic*)]
\item\label{item:intLength}
  The interval \(I_{s}\) has length \(2\cdot (1/4)^{\abs{s}}\).
  \\
  \textbf{Motivation}: allow a strictly nesting family.
\item\label{item:disjoint-if-not-prefix}
If \(s \parallel t\), then \(\interior{I_{s}} \cap \interior{I_{t}} = \emptyset\).
 If \(t \sqsubseteq s\),
  we have \(I_{s} \subseteq I_{t}\) (the \(I_s\) are
  nested intervals).
  \\
  \textbf{Motivation}: instances can be distinguished by their
  associated intervals. Captures packing property.
\item\label{item:family-increasing}
  \(f_{s} \geq f_{\prefix{l}{s}}\)
  with \(f_{s}(x) = f_{\prefix{l}{s}}(x)\) if \(x \in [-1,1]
  \setminus \interior{I_{\prefix{l}{s}} }\).
  \\
  \textbf{Motivation:}
  long prefix determines much of the function.
\item\label{item:family-values}
  The function \(f_{s}\) restricted to the interval \(I_{s}\)
  is of the form 
  \[    f_{s}(x) = b_{\abs{s}} - 2^{- 3 \abs{s}}
    + 2^{- \abs{s}} \abs*{x - I_{s}(0)} \qquad x \in I_{s},\]
  where
  \(b_{\abs{s}} = f_{s}(I_{s}(-1)) = f_{s}(I_{s}(+1))\)
  is the function value on the endpoints of \(I_{s}\).
  This is symmetric on \(I_s\) as
  \(I_{s}(0)\) is the midpoint of \(I_{s}\).
  \\
  \textbf{Motivation:} recursive structure:
  repeat absolute value function on small intervals.
\item\label{item:bounds}
  For \(t \sqsubseteq s\), we have \(f_{s}(x) < b_{\abs{t}}\)
  if and only if \(x \in \interior{I_{t}}\).
  \\
  \textbf{Motivation:}
  level sets encode substrings.
\end{enumerate}

\subsubsection*{Construction of the function family}

We start with the empty string \(\bot\), and
define \(f_{\bot}(x) \coloneqq \abs{x}\) and
\(I_{\bot} \coloneqq [-1, 1]\).
In particular, \(b_{0} = 1\). The further \(b_{k}\) we define via the recursion
\(b_{k + 1} \coloneqq b_{k} - 2 \cdot (1/4)^{k + 1} \cdot 2^{- k}\).

Given \(f_{s}\) and \(I_{s}\),
we define \(f_{s0}\) and \(I_{s0}\) to be the \emph{right modification}
of \(f_{s}\) via \(I_{s0} \coloneqq  I_{s} \left[ - \frac{1}{2}, 0 \right]\); and
$f_{s0}\coloneqq f_{s}(x)$ if \(x \notin I_{s} \left[ - \frac{1}{2}, 1\right]\), and
if \(x \in I_{s} \left[ - \frac{1}{2}, 1\right]\)
\begin{equation*}
f_{s0}(x) \coloneqq b_{\abs{s} + 1} - 2^{- 3 (\abs{s} + 1)} + 2^{-\abs{s} - 1}
    \abs*{x - I_{s} \left(- \frac{1}{4} \right)}.
\end{equation*}


\begin{figure}[h]
  \centering
  \small
  \begin{tikzpicture}[x=2.5em,y=2.5em]
    \draw[thick] (0,6) coordinate(top left)
    [namex ={\(-1\)}]
    -- ++(4,-4) 
    -- ++(4,+4) 
    [namex = {\(+1\)}]
    ;
    \draw[very thick] (0,6)
    -- ++(2,-2) coordinate(break) 
    [namex = {\(- \frac{1}{2}\)}]
    -- ++(1,-1/2)
    [namex = {\(- \frac{1}{4}\)}]
    -- ++(5,5/2) 
    ;
    \draw[dashed] (break) -- ++(right:6)
    node[right]{\(b_{\abs{s} + 1}\)};
    \draw[dashed] (top left) -- ++(right:8)
    node[right]{\(b_{\abs{s}}\)};
    \draw[thin] (0,0) node[left]{\(I_{s}\)} -- (8,0);
    \path (4,4) [namex = {\(0\)}];
  \end{tikzpicture}\\
  \hspace{-0.9cm}
  \begin{tikzpicture}[x=2.5em,y=2.5em]
    \draw[very thick] (0,6)
    [namex ={\(-1\)}]
    -- ++(2,-2) 
    [namex = {\(- \frac{1}{2}\)}]
    -- ++(1,-1/2)
    [namex = {\(- \frac{1}{4}\)}]
    -- ++(5,5/2) 
    [namex = {\(1\)}]
    ;
    \draw[dotted, thick] (8,6)
    -- ++(-2,-2) 
    [namex = {\(\frac{1}{2}\)}]
    -- ++(-1,-1/2) coordinate(min)
    [namex = {\(\frac{1}{4}\)}]
    -- ++(-5,5/2) 
    ;
    \draw[thin] node[left]{\(I_{s}\)} (0,0) -- (8,0);
    \path (4,4) [namex = {\(0\)}];
  \end{tikzpicture}
  \caption{Above: right modification;
    the solid normal line is before the modification,
    the solid thick line after it.
    Below: right modification is the solid thick line;
    left modification is the dotted line.}
  \label{fig:modifications}
\end{figure}
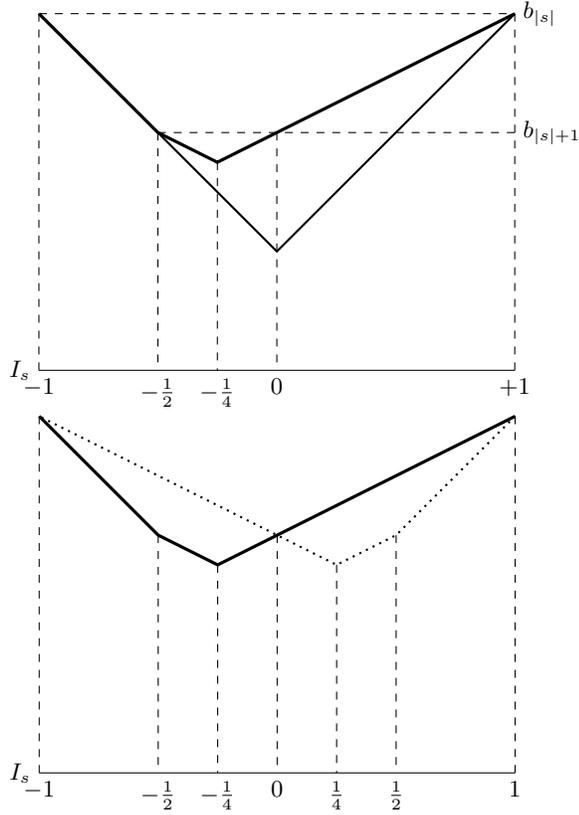

Similarly, the \emph{left modification} \(f_{s1}\) of \(f_s\)
is the reflection of \(f_{s0}\) with respect to \(I_{s}(0)\),
and \(I_{s1}\) is the reflection of \(I_{s0}\)
with respect to \(I_{s}(0)\). 
Observe that \(I_{s0}, I_{s1} \subseteq I_{s}\)
and \(\interior{I_{s0}} \cap \interior{I_{s1}} = \emptyset\).

This finishes the definition of the \(f_{s}\).
Clearly, these functions are convex and Lipschitz-continuous
with Lipschitz constant \(1\), satisfying
\ref{item:intLength}--\ref{item:bounds}.

We establish the packing property for \(\mathcal{F}\). 

\begin{lemma}
  \label{lem:one-dim-packing}
  The family \(\mathcal{F}\) satisfies the packing property
  for \(M = \lfloor(1/3)\log(1/\varepsilon)\rfloor\).
\begin{proof}
Note that \(f_{S}\) has its minimum at the midpoint of \(I_{S}\),
and the function value at the endpoints of \(I_{S}\) are
at least \((1/2)^{3M} \geq \varepsilon\) larger than the
value at the midpoint.
Therefore every \(\varepsilon\)-optimal solution lies in
the interior of \(I_{S}\),
i.e.,
\(\mathcal{S}_{\varepsilon}(f_{S}) \subseteq \interior{I_{S}}\).
Therefore by \ref{item:disjoint-if-not-prefix},
the \(\mathcal{S}_{\varepsilon}(f_{S})\) are pairwise disjoint.
\end{proof}
\end{lemma}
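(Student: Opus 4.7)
The plan is to localize every $\varepsilon$-minimum of $f_s$ inside $\interior{I_s}$ (using the recursive/level-set structure of $f_s$ together with the choice of $M$), and then invoke the disjointness clause of property~\ref{item:disjoint-if-not-prefix} for incomparable strings of equal length. Everything reduces to a careful reading of \ref{item:intLength}–\ref{item:bounds}; no new construction is needed.

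First I would pin down the global minimum of $f_s$ when $\abs{s}=M$. By \ref{item:family-values}, $f_s$ restricted to $I_s$ has the form $b_M - 2^{-3M} + 2^{-M}\abs{x - I_s(0)}$, so it attains a unique minimum on $I_s$ equal to $b_M - 2^{-3M}$ at the midpoint $I_s(0)$, and attains the value $b_M$ at the two endpoints. Outside $\interior{I_s}$ the value is at least $b_M$: this follows by iterating \ref{item:family-increasing} along the chain of prefixes $\bot \sqsubseteq \prefix{1}{s} \sqsubseteq \dotsb \sqsubseteq s$, which shows that on $[-1,1]\setminus\interior{I_s}$ the function $f_s$ coincides with $f_{\prefix{l}{s}}$ for appropriate $l<M$ and is pointwise at least the envelope value $b_M$ given by \ref{item:family-values} applied to the prefixes. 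Thus $f_s^{*} = b_M - 2^{-3M}$.

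Next, I would check the numerical choice of $M$. Since $M = \lfloor (1/3)\log(1/\varepsilon)\rfloor \leq (1/3)\log(1/\varepsilon)$, we have $3M \leq \log(1/\varepsilon)$, hence $2^{-3M} \geq \varepsilon$. Therefore any $\varepsilon$-minimum $x$ satisfies
\[
f_s(x) < f_s^{*} + \varepsilon = b_M - 2^{-3M} + \varepsilon \leq b_M.
\]
Applying property~\ref{item:bounds} with $t=s$ (so $\abs{t}=M$), the condition $f_s(x) < b_M$ is equivalent to $x \in \interior{I_s}$. Consequently $\mathcal{S}_\varepsilon(f_s) \subseteq \interior{I_s}$.

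Finally, any two distinct strings $s \neq s'$ of length exactly $M$ are incomparable, i.e.\ $s \parallel s'$. Property~\ref{item:disjoint-if-not-prefix} then gives $\interior{I_s} \cap \interior{I_{s'}} = \emptyset$, so $\mathcal{S}_\varepsilon(f_s) \cap \mathcal{S}_\varepsilon(f_{s'}) = \emptyset$, which is the packing property. The only mild subtlety is making sure the $\varepsilon$-minimum definition uses strict inequality (it does, by Definition of $\mathcal{S}_\varepsilon$), so that the boundary value $b_M$ is excluded and the containment $\mathcal{S}_\varepsilon(f_s)\subseteq \interior{I_s}$ is clean even when the floor in the definition of $M$ makes $2^{-3M}$ exactly equal to $\varepsilon$; this is the only place any numerical care is needed, and it is automatic.
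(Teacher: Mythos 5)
Your proof is correct and follows essentially the same route as the paper's: locate the minimum of $f_s$ at the midpoint of $I_s$ via \ref{item:family-values}, observe that the gap to the endpoint value $b_M$ is $2^{-3M}\geq\varepsilon$ by the choice of $M$, conclude $\mathcal{S}_\varepsilon(f_s)\subseteq\interior{I_s}$ via \ref{item:bounds}, and finish with the disjointness clause of \ref{item:disjoint-if-not-prefix}. You simply spell out the details (including the strictness of the inequality in the definition of an $\varepsilon$-minimum) that the paper leaves implicit.
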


In the following \(F \in \mathcal F\) will be an instance picked
uniformly at random. The random variable \(S\) will be the associated
string of length \(M\) so that \(F = f_S\) and \(S\) is also distributed uniformly.

\subsubsection*{Reduction to the String Guessing Problem}
\label{sec:char-oracle-answ}

We will now provide an oracle for family \(\mathcal{F}\)
that can be emulated by the String Guessing Oracle.
As a first step, we relate the query point \(x\)
with the indexing strings of the functions. At a high level,
the lemma below shows the existence of a prefix of the unknown
string determining most of the local behavior of the function 
at a 
given query point. From this we will prove in  
Lemma~\ref{lem:learn-nonsmooth-1-dim}
that the oracle answer only reveals this prefix.

\begin{lemma}
  \label{lem:answer-nonsmooth-1-dim}
  Let \(x \in [-1, +1]\) be a query point.
  Then there is a non-empty binary string \(s\) with \(l \coloneqq
  \card{s} \leq M\) with the following properties.
  \begin{enumerate}
  \item\label{item:string-decreasing}
    \(f_{\flipCut{1}{s}}(x) \geq b_{1} > f_{\flipCut{2}{s}}(x) \geq
    \dotsb \geq b_{l-1} > f_{\flipCut{l}{s}}(x) \geq f_{s}(x)\). If \(l < M\) then also \(f_{s}(x) \geq b_{l}\).
  \item\label{item:prefix-gives-value}
    Every binary string \(t\) of length \(M\)
    has a unique prefix \(p\)
    from \(\{\flipCut{1}{s}, \dotsc, \flipCut{l}{s}, s\}\).
    Moreover,
    \(f_{t}(x) = f_{p}(x)\).
  \end{enumerate}

\begin{proof}
Let \(s_{0}\) be the longest binary string
of length less than \(M\),
such that \(x\) lies in the interior of \(I_{s_{0}}\).
We choose \(s\) to be the one of
the two extensions of \(s_{0}\) by \(1\) bit,
for which \(f_{s}\) has the smaller function value at \(x\)
(if the two values are equal, then either extension will do).
Let \(l \coloneqq \card{s}\),
thus \(f_{\flipCut{l}{s}}(x) \geq f_{s}(x)\).

Note that by the choice of \(s_{0}\),
the point \(x\) is not an interior point of \(I_{s}\)
unless \(l = M\).
By \ref{item:disjoint-if-not-prefix},
the point \(x\) is neither an interior point of any of the
\(I_{\flipCut{1}{s}}, \dotsc, I_{\flipCut{l}{s}}\).

To prove \ref{item:prefix-gives-value},
let $t$ be any binary string of length $M$.
The existence and uniqueness of a prefix \(p\) of \(t\)
from the set \(\{\flipCut{1}{s}, \dotsc, \flipCut{l}{s}, s\}\)
is clear.
In particular,
unless \(p = t = s\) and \(l = M\),
the point \(x\) is not an interior point of \(I_{p}\),
hence \(f_{t}(x) = f_{p}(x)\) follows
from \ref{item:family-increasing}.
When \(p = t\), then \(f_{t}(x) = f_{p}(x)\) is obviously true.

Now we prove \ref{item:string-decreasing}.
Recall that \(f_{\flipCut{l}{s}}(x) \geq f_{s}(x)\)
by the choice of \(s\).
First, if
\(l < M\) then \(x \notin \interior{I_{s}}\) by choice,
hence \(f_{s}(x) \geq b_{l}\) by \ref{item:bounds}.
Second, let us prove  \(f_{\flipCut{i}{s}}(x) \geq b_{i} > f_{\flipCut{i+1}{s}}(x)\)
for all \(i \leq l\).
As \(x \notin \interior{I_{\flipCut{i}{s}}}\),
by \ref{item:bounds} we have \(f_{\flipCut{i}{s}}(x) \geq b_{i}\). 
Finally, since \(x \in \interior{I_{\prefix{i}{s}}}\) and 
\( \prefix{i}{s} \sqsubseteq \flipCut{i+1}{s} \),
again by  \ref{item:bounds}
we get \( f_{\flipCut{i+1}{s}}(x) < b_i\).
\end{proof}
\end{lemma}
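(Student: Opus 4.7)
The plan is to construct $s$ by descending the nested interval hierarchy $\{I_t\}$ as far as $x$ permits and then extending by one bit. I would let $s_0$ be the longest binary string of length strictly less than $M$ with $x \in \interior{I_{s_0}}$ (well-defined since the empty string always qualifies), and pick $s$ to be whichever of $s_0 0, s_0 1$ yields the smaller value at $x$, setting $\ell \coloneqq |s| \leq M$. This choice immediately gives the rightmost inequality $f_{\flipCut{\ell}{s}}(x) \geq f_s(x)$ demanded in (i).

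The pivotal fact to establish is the containment claim: $x \notin \interior{I_{\flipCut{i}{s}}}$ for all $1 \leq i \leq \ell$, together with $x \notin \interior{I_s}$ whenever $\ell < M$. For $i \leq \ell - 1$, I would use that $\prefix{i}{s}$ is a proper prefix of $s_0$, so $x \in \interior{I_{\prefix{i}{s}}}$ by nesting, and then the disjoint-interiors clause of (F-2) rules out $x \in \interior{I_{\flipCut{i}{s}}}$. For $i = \ell$ when $\ell < M$, I would appeal directly to the maximality of $s_0$, which simultaneously delivers the auxiliary claim $x \notin \interior{I_s}$. The remaining case $i = \ell = M$ is the one I expect to be delicate: maximality is silent there, and I would instead invoke the min-choice of $s$ together with (F-5) to reach a contradiction---if $x$ lay in $\interior{I_{\flipCut{M}{s}}}$, then (F-5) would give $f_{\flipCut{M}{s}}(x) < b_M \leq f_s(x)$, violating how $s$ was selected.

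With this containment claim in hand, property (i) falls out by applying (F-5) in two ways: taking the \emph{full} string $t = \flipCut{i}{s}$ inside $f_{\flipCut{i}{s}}$ yields $f_{\flipCut{i}{s}}(x) \geq b_i$, whereas taking $t = \prefix{i}{s} \sqsubseteq \flipCut{i+1}{s}$ together with $x \in \interior{I_{\prefix{i}{s}}}$ yields the strict $b_i > f_{\flipCut{i+1}{s}}(x)$. The final clause ``$f_s(x) \geq b_\ell$ when $\ell < M$'' is (F-5) applied once more with $t = s$.

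For property (ii), I would first verify that the strings $\flipCut{1}{s}, \dots, \flipCut{\ell}{s}, s$ are pairwise prefix-incomparable, since any two differ at the position where the shorter one was flipped. Given any $t \in \binSet^{M}$, form $k$, the length of the longest common prefix of $t$ and $s$: if $k = \ell$ then $s \sqsubseteq t$ and $p = s$; otherwise $t$ and $s$ disagree at position $k+1$, so $\flipCut{k+1}{s} \sqsubseteq t$ and $p = \flipCut{k+1}{s}$, with uniqueness immediate from incomparability. The identity $f_t(x) = f_p(x)$ then follows from (F-3), invoking $x \notin \interior{I_p}$ already established above (with the trivial subcase $p = t = s$ covering $\ell = M$). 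As foreshadowed, the main obstacle I foresee is the $\ell = M$ boundary step, where the combinatorial maximality argument runs out and must be supplemented by the analytic comparison via (F-5).
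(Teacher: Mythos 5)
Your proposal is correct and follows essentially the same route as the paper: the same choice of \(s_0\) and \(s\), the same use of \ref{item:disjoint-if-not-prefix} to place \(x\) outside \(\interior{I_{\flipCut{i}{s}}}\), and the same applications of \ref{item:family-increasing} and \ref{item:bounds} for the two claims. Your explicit treatment of the \(\ell = M\) boundary case (ruling out \(x \in \interior{I_{\flipCut{M}{s}}}\) via the min-choice of \(s\) and \ref{item:bounds}) correctly fills in a step the paper attributes only tersely to \ref{item:disjoint-if-not-prefix}.
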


Our construction of instances encodes prefixes in
level sets of the instance. The previous 
lemma indicates that algorithms in this case need 
to identify a random string, where the oracle reveals 
prefixes of such string. The following lemma
formally shows an emulation by the String Guessing 
Oracle.

\begin{lemma}
  \label{lem:learn-nonsmooth-1-dim}
  There is a single-coordinate local oracle 
  \(\tilde{\mathcal{O}}\) for the family \(\mathcal{F}\) above,
  which is emulated by the
  String Guessing Oracle \(\mathcal{O}_{S}\)
  on strings of length \(M\).
\begin{proof}
We define the emulation functions first, as they determine the
emulated oracle \(\tilde{\mathcal{O}}\).
Let \(x \in [-1,1]\) and \(s\) the string from
Lemma~\ref{lem:answer-nonsmooth-1-dim}.
We define the query emulation function as
\(q(x) \coloneqq (s, \operatorname{id})\).
Moreover, let \(l = \card{s}\).

Now we need to emulate the oracle answer. From Lemma 
\ref{lem:answer-nonsmooth-1-dim} \ref{item:prefix-gives-value} 
there exists a prefix \(P\) of \(S\) such that
\(f_S(x)=f_P(x)\). We define the following function \(p\) of the
\(\mathcal{O}_{S}\) oracle answer
\begin{align*}
  p(x, \texttt{EQUAL}) &\coloneqq s, 
  \\
  p(x, k) &\coloneqq \flipCut{k}{s}.
\end{align*}
Note that \(P = p(x, \mathcal{O}_{S}(q(x))\). 
We claim that  \(p\) depends on \(f_S\) only locally around \(x\). First,
if \(f_s(x)<f_{\flipCut{l}{s}}(x)\) then by  Lemma~\ref{lem:answer-nonsmooth-1-dim}
\ref{item:string-decreasing} \(f_S(x)\) determines \(P\) (and thus \(p\)). Otherwise, 
depending on whether \(f_S\) is increasing or decreasing around \(x\), we
can determine if $P_l=s_l$.

Since \(f_{S}(x) = f_{P}(x)\) and \(f_{S} \geq f_{P}\),
a valid oracle answer to the query point \(x\)
is \(f_{P}(x)\) as function value and
a subgradient \(\nabla f_{P}(x)\) of \(f_{P}\) at \(x\)
as \(\nabla f_{S}(x)\).
Therefore we define the
answer emulation as
\(  a(x, R) \coloneqq (f_{p(x, R)}(x), \nabla f_{p(x, R)}(x)). \)
This provides a single-coordinate local oracle \(\tilde{\mathcal{O}}\)
for the family \(\mathcal{F}\) (the single-coordinate condition
is trivially satisfied when \(n=1\))
that can be emulated by
the String Guessing Oracle \(\mathcal{O}_{S}\).
\end{proof}
\end{lemma}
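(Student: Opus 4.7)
The plan is to build the query and answer emulation functions explicitly from Lemma~\ref{lem:answer-nonsmooth-1-dim}. Given a query point $x$, I take $s$ to be the length-$l$ string produced by that lemma (with $l \leq M$), and set the query emulation to $q(x) \coloneqq (s, \operatorname{id})$; this asks the String Guessing Oracle whether the hidden $S$ matches $s$ on its first $l$ coordinates in the natural order. By part~\ref{item:prefix-gives-value} of the lemma, the relevant prefix $P$ of $S$ lies in $\{\flipCut{1}{s}, \dotsc, \flipCut{l}{s}, s\}$: the answer \texttt{EQUAL} pins down $P = s$, while an index $k$ pins down $P = \flipCut{k}{s}$. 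I then define the answer emulation by $a(x, R) \coloneqq (f_P(x), \nabla f_P(x))$, with a fixed rule (e.g., the right derivative) for selecting the subgradient at points of non-differentiability of $f_P$.

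Correctness of this as a first-order answer for $f_S$ is immediate. Part~\ref{item:prefix-gives-value} gives $f_S(x) = f_P(x)$, while iterating property~\ref{item:family-increasing} along the chain $P \sqsubseteq \prefix{l+1}{S} \sqsubseteq \dotsb \sqsubseteq S$ yields $f_S \geq f_P$ on all of $[-1,1]$. Combined with $f_S(x) = f_P(x)$, this makes any subgradient of $f_P$ at $x$ automatically a subgradient of $f_S$ at $x$. In dimension one the single-coordinate condition is vacuous, so once locality is established, $\tilde{\mathcal{O}}$ is a bona fide single-coordinate local first-order oracle, and the emulation identity $\tilde{\mathcal{O}}_{f_S}(x) = a(x, \mathcal{O}_S(q(x)))$ holds by construction.

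The genuine obstacle is verifying locality, i.e., that the pair $(f_P(x), \nabla f_P(x))$ is determined by the restriction of $f_S$ to an arbitrarily small neighborhood of $x$, even though the identity of $P$ a priori depends on $S$ globally. I would argue by splitting on the chain in Lemma~\ref{lem:answer-nonsmooth-1-dim}\ref{item:string-decreasing}. If $f_{\flipCut{l}{s}}(x) > f_s(x)$ strictly, then the point value $f_S(x) = f_P(x)$ alone determines $P$ among the $l+1$ candidates, because their values at $x$ lie in pairwise disjoint intervals separated by the breakpoints $b_i$. In the borderline case $f_{\flipCut{l}{s}}(x) = f_s(x)$ — which by construction can occur only when $x$ sits on the common boundary of the nested intervals — the two remaining candidates differ in the direction of their one-sided slopes at $x$ (one arises from a left modification and the other from a right modification, cf.~Figure~\ref{fig:modifications}), so the sign of a one-sided derivative of $f_S$ at $x$ distinguishes them. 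Both the value $f_S(x)$ and the one-sided derivatives are local quantities, so $P$, and hence the returned pair, is fully determined by the germ of $f_S$ at $x$, completing the emulation.
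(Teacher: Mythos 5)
Your construction is the same as the paper's: the identical query emulation $(s,\operatorname{id})$, the identical map from oracle answers to the prefix $P$, the identical answer emulation $(f_P(x),\nabla f_P(x))$ justified by $f_S(x)=f_P(x)$ together with $f_S\geq f_P$, and the same locality argument splitting on whether $f_{\flipCut{l}{s}}(x)>f_s(x)$ (value determines $P$ via the separating breakpoints $b_i$) or equality holds (a one-sided derivative distinguishes the last two candidates). Your write-up is in fact slightly more explicit than the paper's on the borderline case, but the approach is essentially identical.
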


The previous lemma together with
Lemma~\ref{lem:emulator-complexity} leads to
a straightforward proof of
Theorem~\ref{th:box-nonsmooth-lower-bound} in
the one dimensional case.

\begin{proof}[Proof of Theorem~\ref{th:box-nonsmooth-lower-bound}
  for \(n = 1\)]
Let \(A\) be a black box optimization
algorithm for \(\mathcal F\) accessing
the oracle \(\tilde{\mathcal{O}}\).
As \(\mathcal F\) satisfies the
packing property by Lemma~\ref{lem:one-dim-packing}, in order to find
an \(\varepsilon\)-minimum the algorithm \(A\) has to identify the
string \(s\) defining the function \(f = f_s\) (and
from an \(\varepsilon\)-minimum the string \(s\) can be
recovered). 

Let \(F = f_S\) be the random instance chosen with uniform
distribution.
Together with the emulation defined in
Lemma~\ref{lem:learn-nonsmooth-1-dim}, algorithm \(A\) solves the
String Guessing Problem for strings of length \(M\),
hence requiring at least
\([(1 - P_{e}) M - 1] / 2\)
queries in expectation
with error probability at most \(P_{e}\)
by Proposition~\ref{prop:string-via-oracle}.
Moreover, with probability \(1 - P_{e} - o(1)\),
the number of queries is at least \(\Omega(M)\).
This proves the theorem for \(n=1\) by the choice of \(M\).
\end{proof}

\subsection{Multidimensional case}
\label{sec:mult-case}

\subsubsection*{Construction of function family}

In the general \(n\)-dimensional case
the main difference is using a larger indexing string.
Therefore
we choose \(M = \lfloor (1/3)\log (1 / \varepsilon) \rfloor\),
and consider \(n\)-tuples \(s_{1}, \dotsc, s_{n}\) of binary strings
of length \(M\) as indexing set for the function family
\(\mathcal{F}\),
and define the member functions via
\begin{equation} \label{box_family}
  f_{s_{1}, \dotsc, s_{n}}(x_{1}, \dotsc, x_{n}) \coloneqq
  \max_{i \in [n]} f_{s_{i}}(x_{i}),
\end{equation}
where the \(f_{s_{i}}\) are the functions
from the one dimensional case. This way, the size of
$\mathcal{F}$ is $2^{nM}$.
Note that as the \(f_{s_{i}}\) are \(1\)-Lipschitz,
the \(f_{s_{1}, \dotsc, s_{n}}\) are \(1\)-Lipschitz
in the \(L^{\infty}\) norm, too. We prove that $\mathcal{F}$
satisfies the packing property.

\begin{lemma} \label{pack_prop_box}
  The family \(\mathcal{F}\) above satisfies the packing property for
  \(M = \lfloor(1/3) \log (1 / \varepsilon) \rfloor\).
\begin{proof}
As the minimum values of all the one dimensional \(f_{s_{i}}\)
coincide,
obviously the set of \(\varepsilon\)-minima
of \(f_{s_{1}, \dots, s_{n}}\) is the product of its
components:
\begin{equation*}
  \mathcal{S}_{\varepsilon}(f_{s_{1}, \dots, s_{n}})
  = \prod_{i \in [n]} \mathcal{S}_{\varepsilon}(f_{s_{i}}).
\end{equation*}
Hence the claim reduces to the one dimensional case,
proved in Lemma~\ref{lem:one-dim-packing}.
\end{proof}
\end{lemma}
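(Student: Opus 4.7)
The plan is to reduce the packing property for the $n$-dimensional family to the one-dimensional case established in Lemma~\ref{lem:one-dim-packing}. The key structural observation is that when one takes a pointwise maximum of functions that share a common minimum value, the set of $\varepsilon$-minima of the maximum splits as a product of the $\varepsilon$-minima sets of the components.

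First I would verify that all one-dimensional $f_{s}$ with $\size{s} = M$ have the same minimum value $f^{*}$. By property~\ref{item:family-values}, $f_{s}$ attains its minimum on $I_{s}$ at $I_{s}(0)$ with value $b_{M} - 2^{-3M}$, a quantity depending only on $M$ via the recursion for $b_{k}$, not on $s$. Hence for every $(s_{1}, \dotsc, s_{n})$ the multidimensional function satisfies
\begin{equation*}
  f^{*}_{s_{1},\dotsc,s_{n}} \;=\; \min_{x \in X} \max_{i \in [n]} f_{s_{i}}(x_{i}) \;=\; b_{M} - 2^{-3M},
\end{equation*}
and this common value is attained at any product of midpoints of the $I_{s_{i}}$.

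Next I would show the product decomposition
\begin{equation*}
  \mathcal{S}_{\varepsilon}(f_{s_{1},\dotsc,s_{n}})
  \;=\; \prod_{i \in [n]} \mathcal{S}_{\varepsilon}(f_{s_{i}}).
\end{equation*}
The inclusion $\subseteq$ follows because $f_{s_{j}}(x_{j}) \leq \max_{i} f_{s_{i}}(x_{i}) < f^{*} + \varepsilon = f^{*}_{s_{j}} + \varepsilon$ for each coordinate $j$. The reverse inclusion holds because if every $x_{j}$ satisfies $f_{s_{j}}(x_{j}) < f^{*}_{s_{j}} + \varepsilon$, taking the maximum over $j$ preserves the strict inequality since all $f^{*}_{s_{j}}$ coincide with $f^{*}_{s_{1},\dotsc,s_{n}}$.

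Finally, I would conclude by the one-dimensional packing lemma: if $(s_{1}, \dotsc, s_{n}) \neq (t_{1}, \dotsc, t_{n})$, pick some index $j$ with $s_{j} \neq t_{j}$; then $\mathcal{S}_{\varepsilon}(f_{s_{j}}) \cap \mathcal{S}_{\varepsilon}(f_{t_{j}}) = \emptyset$ by Lemma~\ref{lem:one-dim-packing} (valid for the chosen $M$), and the product decomposition immediately forces $\mathcal{S}_{\varepsilon}(f_{s_{1},\dotsc,s_{n}}) \cap \mathcal{S}_{\varepsilon}(f_{t_{1},\dotsc,t_{n}}) = \emptyset$. I expect the only subtle point is the verification that all one-dimensional instances share exactly the same optimal value, which is what makes the product decomposition work; once this is in hand the rest is formal manipulation.
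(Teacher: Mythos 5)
Your proposal is correct and follows exactly the paper's argument: establish that all one-dimensional instances share the same optimal value, deduce the product decomposition of the $\varepsilon$-minima set, and reduce to Lemma~\ref{lem:one-dim-packing}. You simply spell out the details that the paper dismisses as ``obvious,'' and those details check out.
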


Let \(S = (S_{1}, \dotsc, S_{n})\) denote the tuple of strings
indexing the actual instance,
hence the \(S_{i}\) are mutually independent uniform 
binary strings; and let \( F = f_{S_1,\ldots,S_n}. \)

\subsubsection*{Reduction to the String Guessing Problem}

We argue as in the one dimensional case,
but now the string for the String Guessing Oracle
is the concatenation of the strings \(S_{1}, \dotsc, S_{n}\),
and therefore has length \(n M\).
\begin{lemma}
  \label{lem:learn-nonsmooth-adversarial}
  There is a single-coordinate oracle \(\tilde{\mathcal{O}}\)
  for family \(\mathcal{F}\) that can be emulated by
  the String Guessing Oracle \(\mathcal{O}_{S}\)
  where \(S\) is the concatenation of the
  \(S_{1}, \dots, S_{n}\).
\end{lemma}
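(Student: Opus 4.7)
The plan is to lift the 1D emulation in Lemma~\ref{lem:learn-nonsmooth-1-dim} to $n$ coordinates simultaneously, packing them into a single query of the String Guessing Oracle on the concatenated hidden string $S_1 S_2 \cdots S_n$ of length $nM$. For each coordinate $i \in [n]$, I would first apply Lemma~\ref{lem:answer-nonsmooth-1-dim} to the query component $x_i$ to obtain a string $s^{(i)}$ of length $l_i \leq M$ together with the decreasing values $v_{i,k} \coloneqq f_{\flipCut{k}{s^{(i)}}}(x_i)$. The composite query $q(x) = (s, \sigma)$ takes $s$ to be the concatenation $s^{(1)} s^{(2)} \cdots s^{(n)}$, and $\sigma$ an embedding into $[nM]$ that aligns each bit of $s^{(i)}$ with the corresponding position of the $i$-th block of the hidden string; the only remaining design decision is the \emph{order} in which the bits are inspected.

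I would order the pairs $(i,k)$ with $1 \leq k \leq l_i$ by \emph{decreasing} value of $v_{i,k}$, breaking ties in favor of the smallest coordinate index $i$. With this ordering, if $\mathcal{O}_S$ reports that the first wrong bit occurs at the position corresponding to $(i^*, k^*)$, then every other coordinate $i$ whose first wrong bit is at some depth $k_i$ must have $(i, k_i)$ at or after $(i^*, k^*)$ in the order, so Lemma~\ref{lem:answer-nonsmooth-1-dim} \ref{item:prefix-gives-value} gives $f_{S_i}(x_i) = v_{i,k_i} \leq v_{i^*,k^*} = f_{S_{i^*}}(x_{i^*})$, and the tie-break forces $i^*$ to be the smallest index attaining the maximum. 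The answer emulation then sets $P_{i^*} \coloneqq \flipCut{k^*}{s^{(i^*)}}$ and returns $\bigl(f_{P_{i^*}}(x_{i^*}),\, \nabla f_{P_{i^*}}(x_{i^*})\, e_{i^*}\bigr)$; when the oracle answers \texttt{EQUAL} it takes $P_i \coloneqq s^{(i)}$ for all $i$ and reads off the argmax directly from the computable values $f_{s^{(i)}}(x_i)$. The resulting $\tilde{\mathcal{O}}$ is single-coordinate by construction, a valid first-order oracle since $f_{S_{i^*}}$ agrees with $f_{P_{i^*}}$ in a neighborhood of $x_{i^*}$, and local by the same argument used in the 1D case.

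The main technical obstacle I expect is handling coordinates $i$ with $v_{i,l_i} \geq v_{i^*,k^*}$, where every bit of $s^{(i)}$ is tested and matched before the first wrong bit. In that case $P_i = s^{(i)}$ and $f_{S_i}(x_i) = f_{s^{(i)}}(x_i)$, which is bounded only by $v_{i,l_i}$ and could a priori exceed $v_{i^*,k^*}$. I would address this by enlarging the answer emulation to flag these fully-matched coordinates (they are detectable from the position of $R$ in the pre-computed ordering), compute their baselines $f_{s^{(i)}}(x_i)$ from $x$, and re-select $i^*$ as the smallest index achieving the true maximum over $\{i^*\} \cup \{i : \text{coord } i \text{ is fully matched}\}$. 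Once the emulation is verified in this way, Lemma~\ref{lem:emulator-complexity} combined with Proposition~\ref{prop:string-via-oracle} applied to the concatenation of length $nM$ gives the lower bound on the oracle complexity of $\tilde{\mathcal{O}}$, exactly as in the one-dimensional argument.
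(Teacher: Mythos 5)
Your construction is essentially the paper's: both interleave the per-coordinate prefix bits in decreasing order of the values \(f_{\flipCut{h}{s_i}}(x_i)\) with ties broken by coordinate index, and read off the maximizing coordinate and its prefix from the position of the first mismatch. The only difference is that the paper restricts the query to labels with \(f_{\flipCut{h}{s_i}}(x_i) \geq \max_{j} f_{s_j}(x_j)\), so the first mismatch directly identifies the (smallest) argmax coordinate and no re-selection over fully-matched coordinates is needed; your answer-side correction handles the same corner case and is equally valid.
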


Before proving the result, let us motivate our choice
for the first-order oracle. The general case arises 
from an interleaving of the case \(n = 1\). 
As we have seen in the proof of 
Lemma~\ref{lem:learn-nonsmooth-1-dim}, 
for \(n=1\) querying the first-order oracle 
leads to querying prefixes. By \ref{item:family-increasing}, 
for any
prefix \(S'\) of \(S\) we have \(f_{S'} \leq f_S\);
this gives a lower bound on the unknown instance.
By querying a point $x$ we obtain such a prefix with the 
additional property \(f_{S'}(x)=f_S(x)\), which localizes
the minimizer in an interval, and thus provides an upper
bound on its value.

Now, for general \(n\) we want to upper bound the maximum
by prefixes of the hidden strings. In particular, there is no
use to querying any potential prefixes \(u\) for coordinate \(i\) 
such that \(f_u(x_{i})\) is strictly smaller than the candidate 
maximum; they are not revealed by the oracle.

The query string for the String Guessing Oracle now arises by 
interleaving the query 
strings for each coordinate. In particular, if we restrict the query 
string to the substring consisting only of prefixes for a specific 
coordinate \(i\), then these substrings should be ordered by 
\(\sqsubseteq\), which is precisely the ordering we used for 
the case \(n = 1\) as a necessary condition. Thus, a natural 
way of interleaving these query strings is by their objective 
function value. Moreover, refining this order by the lexicographic order
on coordinates will induce a single-coordinate oracle.

\begin{proof}
Let \(x=(x_1,\ldots,x_n)\) be a query point. For a family of strings \(\{S_i\}_i\)
let \(S\) be their concatenation, and for notational convenience
let \(S_{i,h}\) denote the \(h\)-th bit of \(S_i\). Applying
Lemma~\ref{lem:answer-nonsmooth-1-dim} to each coordinate 
\(i\in[n]\), there is a number \(l_i\) and a string \(s_i\) of length 
\(l_i\) associated to the point \(x_i\). 

We define the confidence order \(\prec\) of labels \((i, h)\) with 
\(i \in [n]\) and \(h \in [l_{i}]\) as the one induced by the lexicographic 
order on the pairs \((- f_{\flipCut{h}{s_{i}}}(x_{i}), i)\) i.e.,
\begin{multline} \label{lexic-order}
  (i_{1}, h_{1}) \prec (i_{2}, h_{2}) \\
  \iff
  \begin{cases*}
    f_{\flipCut{h_{1}}{s_{i_{1}}}}(x_{i_{1}})
  > f_{\flipCut{h_{2}}{s_{i_{2}}}}(x_{i_{2}}) \qquad\quad\mbox{or} \\
    f_{\flipCut{h_{1}}{s_{i_{1}}}}(x_{i_{1}})
  = f_{\flipCut{h_{2}}{s_{i_{2}}}}(x_{i_{2}})\ \wedge \ i_{1} \leq i_{2}.
  \end{cases*}
\end{multline}
We restrict to the labels \((i,h)\) with
\(f_{\flipCut{h}{s_{i}}}(x_{i}) \geq \max_{j \in [n]} f_{s_{j}}(x_{j})\) 
(there is no use to query the rest of labels, as pointed out
above). Let \((i_{1}, h_{1}), \dotsc, (i_{k}, h_{k})\)
be the sequence of these labels in
\(\prec\)-increasing order.
Let \(t\) be the string of length \(k\)
with \(t_{m} = s_{i_{m}, h_{m}}\) for all \(m \in [k]\).
We define the query emulation as
\(q(x) = (t, \sigma)\)
with \(\sigma_{m} \coloneqq (i_{m}, h_{m})\).

We define a coordinate \(j\) and \(p\) a prefix of \(S_{j}\))
as helper functions in \(x\) and the answer of the String Guessing Oracle.
for the answer emulation \(a\)
(with the intent of having \(f_{S}(x) = f_{p}(x_{j})\).
If the oracle answer is \texttt{EQUAL},
then we choose \(j=i_k\),
and set \(p \coloneqq \prefix{h_k}{s_j}\).
If the oracle answer is a number \(m\)
then we set \(j \coloneqq i_{m}\) and
\(p \coloneqq \flipCut{h_{m}}{s_{i_{m}}}\).

Analogously as in the proof of Lemma~\ref{lem:learn-nonsmooth-1-dim},
both \(p\) and \(j\) depend only on \(x\) and on the local behavior
of \(f_S\) around \(x\). Moreover, it is easy to
see that \(f_{S}(x) = f_{p}(x_{j})\) and 
\(f_{S}(y) \geq f_{S_{j}}(y_{j}) \geq f_{p}(y_{j})\) for all \(y\),
which means that \( \nabla f_{p}(x_{j}) e_j\) 
is a subgradient of \(f_S\) at \(x\).

We now define the answer emulation
\[a(x, R) = (f_{p(x, R)}(x_{j(x, R)}), \nabla f_{p(x, R)}(x_{j(x, R)}) e_{j(x,R)}),\]
and thus the oracle
\(\tilde{\mathcal{O}}(x) = a(x, \mathcal{O}_{S}(q(x)))\)
is a first-order local oracle for the family \(\mathcal{F}\) that
can be emulated by  the String Guessing Oracle. Finally, the 
single-coordinate condition is satisfied from the confidence order 
of the queries, which proves our result.
\end{proof}

We are ready to prove Theorem~\ref{th:box-nonsmooth-lower-bound}.

\begin{proof}[Proof of Theorem~\ref{th:box-nonsmooth-lower-bound}]  
The proof is analogous to the case \(n = 1\). However, by the emulation
via Lemma~\ref{lem:learn-nonsmooth-adversarial} we solve the
String Guessing Problem for strings of length \(nM\).
Thus by Proposition~\ref{prop:string-via-oracle}
we obtain the claimed bounds the same way as in the case \(n=1\).
\end{proof}

\section{Single-coordinate oracle complexity for \(L^{p}\)-Balls}
\label{sec:large-scale-compl}

In this section we examine the complexity of
convex nonsmooth optimization on
the unit ball $B_{p}(0,1)$ in the \(L^{p}\) norm
for \(1 \leq p < \infty\). Again, we restrict our analysis
to the case of single-coordinate oracles.
We distinguish
the large-scale case (i.e., $\varepsilon \geq 1 / n^{\max\{p,2\}}$),
and low-scale case (i.e., $\varepsilon \leq n^{-1/\max\{p,2\}-\delta}$, for
fixed $\delta>0$).

\subsection{Large-scale case}
\label{sec:large-scale-case}

\begin{theorem}
\label{thm:large-scale-ball}
  Let $1 \leq p<\infty$ and \(\varepsilon \geq 1 / \sqrt[p]{n}\).
  There exists a finite family \(\mathcal{F}\)
  of convex Lipschitz-continuous functions
  in the \(L^{p}\) norm
  with Lipschitz constant \(1\)
  on the \(n\)-dimensional unit ball \(B_{p}(0,1)\), and 
  a single-coordinate local oracle \(\tilde{\mathcal{O}}\)
  for \(\mathcal{F}\),
  such that
  both
  the distributional
  and
  the high-probability oracle complexity
  of finding an \(\varepsilon\)-minimum
  under the uniform distribution
  are
  \(\Omega\left( 1 / \varepsilon^{\max\{p, 2\}} \right)\).
  
 For bounded-error algorithms with error probability at most \(P_e\),
 the distributional complexity is
 \(  \Omega \left( (1-P_e)/{\varepsilon^{\max\{p,2\}}} \right) \),
 while the high probability complexity is
 \(\Omega\left( 1/\varepsilon^{\max\{p,2\}} \right)\).
\end{theorem}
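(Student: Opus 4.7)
The plan mirrors Theorem~\ref{th:box-nonsmooth-lower-bound}: set $M = \lfloor c / \varepsilon^{\max\{p,2\}} \rfloor$ for a small absolute constant $c > 0$, construct a finite family $\mathcal{F}$ indexed by $\{0,1\}^M$, verify the packing property at scale $\varepsilon$, exhibit a single-coordinate local oracle $\tilde{\mathcal{O}}$ for $\mathcal{F}$ that is emulated by the String Guessing Oracle $\mathcal{O}_S$ on strings of length $M$, and conclude via Lemma~\ref{lem:emulator-complexity} and Proposition~\ref{prop:string-via-oracle}. The hypothesis $\varepsilon \geq n^{-1/p}$ is exactly what is needed so that the construction fits inside $B_{p}(0,1)$.

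In contrast to Section~\ref{subsec:one_dim_case}, the nested absolute-value tree is replaced by a flat, single-layer piecewise-linear construction, reflecting that in the large-scale regime each bit of $s$ only improves the objective polynomially rather than logarithmically. For the easier case $p \geq 2$, I would take $M$ distinct coordinate directions and set $f_s(x) \coloneqq \max_{i \in [M]} \bigl( (-1)^{s_i} x_i \bigr)$ for $s \in \{0,1\}^M$. Each axis-aligned subgradient has $L^{p^\ast}$ norm~$1$, so $f_s$ is $1$-Lipschitz in $\norm[p]{\cdot}$. A direct computation shows $f_s^\ast = -M^{-1/p}$ on $B_p(0,1)$, attained only on vectors satisfying $\sign(x_i) = -(-1)^{s_i}$ for all $i \in [M]$; choosing $c$ so that $M^{-1/p} > 2\varepsilon$ then yields the packing property (every $\varepsilon$-minimizer determines $s$ via its sign pattern) and fixes $M = \Theta(1/\varepsilon^p)$, which is within $n$ by the hypothesis. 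In the harder regime $1 \leq p < 2$, where $M = \Theta(1/\varepsilon^2)$ can exceed $n$ and distinct axes no longer suffice, I would replace axis-aligned maxima by random $\pm 1$ direction vectors supported on disjoint blocks of $[n]$, suitably normalized so that the family $f_s(x) = \max_{i \in [M]} \langle v_i^{(s)}, x \rangle$ is still $1$-Lipschitz in $\norm[p]{\cdot}$ and so that the classical $\ell_2$-style random-sign packing argument forces $\varepsilon$-minimizers to resolve a constant fraction of the bits of $s$.

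The emulation then follows the pattern of Lemma~\ref{lem:learn-nonsmooth-adversarial}: $\tilde{\mathcal{O}}(x)$ returns $f_s(x)$ together with the single-coordinate (or block-coordinate in the $p < 2$ case) subgradient at the lexicographically smallest index $i^\ast$ achieving the max, which is local because only a tiny neighborhood of $x$ is needed to identify $i^\ast$ and read off the bit of $s$ driving the sign. The query map $q(x)$ lists candidate indices in decreasing order of their contribution to the defining max, and the answer map $a$ translates \texttt{EQUAL} or a first-mismatch index from $\mathcal{O}_S$ back into local subgradient information exactly as in the box proof. Plugging this into Proposition~\ref{prop:string-via-oracle} for strings of length $M$ yields $\expectation{T} = \Omega((1-P_e) M)$ and $\probability{T = \Omega(M)} = 1 - P_e - o(1)$, which are exactly the two bounds claimed.

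The main obstacle I foresee is the case $1 \leq p < 2$: the $\Omega(1/\varepsilon^2)$ barrier is classically obtained through random-direction packings rather than a coordinate-by-coordinate argument, and repackaging it into a bit-local, single-coordinate format whose oracle can be emulated by $\mathcal{O}_S$ without losing too much in the number of bits resolved per query is the delicate point that I expect to absorb most of the actual work.
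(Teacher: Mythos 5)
Your construction for \(p \geq 2\) is essentially the paper's: the paper takes \(M = \lfloor 1/\varepsilon^{p}\rfloor - 1\) and \(f_{s}(x) = \max_{i\in[M]} s_{i}x_{i}\) with \(s \in \{-1,+1\}^{M}\), verifies packing by comparing against \(x^{*} = -M^{-1/p}(s_{1},\dots,s_{M},0,\dots,0)\), and emulates via a query that lists coordinates in decreasing order of \(\abs{x_{i}}\). That part of your sketch is sound.

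The case \(1 \leq p < 2\) contains a genuine gap, and it is exactly at the point you flag as delicate. Your proposed fix --- \(\pm 1\) directions supported on \emph{disjoint} blocks of \([n]\) --- cannot reach \(M = \Theta(1/\varepsilon^{2})\). If \(v_{i} = u_{i}/B^{1/q}\) with \(u_{i}\) a \(\pm 1\) vector on a block of size \(B\) and \(1/p+1/q=1\) (so \(\norm[q]{v_{i}}=1\), giving the Lipschitz bound), then for disjointly supported directions the natural minimizer \(x^{*} = -t\sum_{i}s_{i}u_{i}\) satisfies \(\norm[p]{x^{*}}^{p} = t^{p}MB\), and normalizing to the unit ball forces the optimal value to be only \(-M^{-1/p}\), independent of \(B\). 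Hence the packing property (optimum below \(-\varepsilon\)) caps \(M\) at \(1/\varepsilon^{p}\), which for \(p<2\) is strictly weaker than the claimed \(\Omega(1/\varepsilon^{2})\). The point is not that distinct axes run out, but that disjoint supports make the \(L^{p}\) norm of the candidate minimizer add up coordinate-blockwise. The paper's resolution is to use \emph{overlapping} directions: a full-support orthogonal \(\pm 1\) (Hadamard-type) basis \(\nu_{1},\dots,\nu_{M}\) of \(\RR^{M}\), scaled to \(\xi_{i} = \nu_{i}/\sqrt[q]{M}\), and \(f_{s}(x) = \max_{i} s_{i}\langle \xi_{i}, x\rangle\). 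Because the minimizer now spreads over all \(M\) standard coordinates, one gets \(\norm[p]{x^{*}} \leq \tnorm{x^{*}} = 1\) while the optimal value is \(-1/\sqrt{M}\), so \(M\) just below \(1/\varepsilon^{2}\) is admissible; the emulation then carries over verbatim from the \(p\geq 2\) case since the construction is again a max of \(1\)-Lipschitz linear coordinate functions. Without this (or an equivalent) replacement, your argument proves only \(\Omega(1/\varepsilon^{p})\) for \(1\leq p<2\).
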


\begin{remark}[The case $p=1$]
  For $p=1$, the lower bound can be improved to
  $\Omega\left(\frac{\ln n}{\varepsilon^{2}}\right)$
  by a nice probabilistic argument,
  see \cite[Section~4.4.5.2]{Nemirovski:1983}.
\end{remark}

As in the previous section, we will construct a single-coordinate 
oracle that can be emulated by the String Guessing Oracle.
As the lower bound does not depend on the dimension,
we shall restrict our attention to the first
\(M = \Omega (1 / \varepsilon^{\max\{p, 2\}})\) coordinates.
For these coordinates,
it will be convenient to work in an orthogonal basis
of vectors with maximal ratio of \(L^{p}\) norm and
\(L^{2}\) norm,
to efficiently pack functions in the \(L^{p}\)-ball.
For \(p \geq 2\) the standard basis vectors \(e_{i}\)
already have maximal ratio,
but for \(p < 2\) it requires a basis of vectors with
all coordinates of all vectors being \(\pm 1\),
see Figure~\ref{fig:max-l-p}.
In particular, in our working basis the \(L^{p}\) norm
might look different than in the standard basis.
\begin{figure}[htb]
  \centering
  \begin{tikzpicture}[x=1cm, y=1cm]
    \draw[gray] (0,0) circle[radius=1];
    \node[anchor=north] at (0,-1) {\(p > 2\)};
	\draw (0,0)
	\foreach \x/\y in {0/1,1/0, 0/-1, -1/0}
	  {edge[-latex] (\x,\y)};
    \draw
	plot[id=up3, domain=-1:1, samples=53, smooth]
	function{(1-((abs(x))**3))**(1.0/3)}
	plot[id=down3, domain=-1:1, samples=53, smooth]
	function{-(1-((abs(x))**3))**(1.0/3)}
	;
  \end{tikzpicture}
  \hfil
  \begin{tikzpicture}
    \draw[gray] (0,0) circle[radius=1];
    \node[anchor=north] at (0,-1) {\(p < 2\)};
    \draw
	plot[id=up34, domain=-1:1, samples=53, smooth]
	(\x^3, {sqrt(sqrt((1 - abs(\x)^4)^3))})
	plot[id=down34, domain=-1:1, samples=53, smooth]
	(\x^3, {-sqrt(sqrt((1 - abs(\x)^4)^3))})
	;
	\draw[scale=1/sqrt(2)] (0,0)
	\foreach \x/\y in {1/1, 1/-1, -1/1, -1/-1}
	  {edge[-latex] (\x,\y)};
  \end{tikzpicture}
  \caption{\label{fig:max-l-p}%
    Unit vectors of maximal \(L^{p}\) norm
    together with the unit Euclidean ball in gray and
    the unit \(L^{p}\)-ball in black.}
\end{figure}
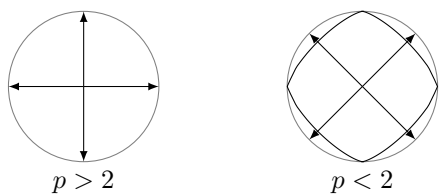
We shall present the two cases uniformly,
keeping the differences to a bare minimum.

The exact setup is as follows.
Let \(\norm[p]{\cdot}\) denote the \(L^{p}\) norm in the original
basis.
Let \(r \coloneqq \max\{p, 2\}\) for simplicity.
We define \(M\) and the working basis
for the first \(M\) coordinates,
such that the coordinates as functions will have Lipschitz constant
at most \(1\).

\begin{casesblock}
\begin{case}{\(2\leq p <\infty\).}
We let
\(M \coloneqq \left\lfloor\frac{1}{\varepsilon^p} \right\rfloor - 1\).
The working basis is chosen to be the standard basis.
\end{case}
\begin{case}{\(1\leq p<2\).}
Let \(l\) be the largest integer with $1/\varepsilon^{2} > 2^{l}$,
and  define \(M\coloneqq 2^l\).
Since \(\varepsilon\geq 1/n^2\),
obviously \(M < 1 / \varepsilon^{2} \leq n\).
In the standard basis,
the space \(\RR^{2}\) has an orthogonal basis of \(\pm 1\) vectors,
e.g., \((1,1)\) and \((1,-1)\).
Taking \(l\)-fold tensor power,
we obtain an orthogonal basis of \(\RR^{M}\)
consisting of \(\pm 1\) vectors \(\nu_{i}\) in the standard basis.
We shall work in the orthonormal basis
\(\xi_{i} \coloneqq  \nu_{i} / \sqrt{M}\).
Note that
the coordinate functions \(\sprod{\xi_{i}}{\cdot}\) of this basis
have Lipschitz constant
at most \(1\) with respect to \(\norm[p]{\cdot}\),
as
\(\sprod{\xi_{i}}{x} \leq \norm[q]{\xi_{i}} \norm[p]{x}\)
for all \(x\),
and \(\norm[q]{\xi_{i}} = \sqrt[q]{M} / \sqrt{M} < 1\),
where \(q\) is chosen such that \(1/p + 1/q = 1\).
\end{case}
\end{casesblock}

Clearly in both cases,
\(M \leq n\) and \(M = \Omega (1 / \varepsilon^{r})\),
but \(M < 1 / \varepsilon^{r}\).
Note that \(\norm[p]{x} \leq \tnorm{x}\) if \(p < 2\).
As the working basis is orthonormal,
\(\tnorm{\cdot}\) is the \(L^{2}\) norm
in both the original basis and the working basis.

\subsubsection*{Construction of function family}

We define our functions \(f_{s} \colon B_{p}(0,1)\to \RR\)
as maximum of (linear) coordinate functions:
\begin{equation}\label{eq:large_scale_instance}
f_s(x) = \max_{i\in[M]} s_i x_i,
\end{equation}
where the \(x_i\) are
the coordinates of \(x\) in our working basis.

We parameterize the family
\(\mathcal{F} = \{f_{s} : s \in \{-1, +1\}^{M}\}\)
via sequences \(s = (s_{1}, \dots, s_{M})\) of signs \(\pm 1\) of
length \(M\). By the above this family satisfies the requirements of
Theorem~\ref{thm:large-scale-ball}. We establish the
packing property for \(\mathcal{F}\).

\begin{lemma} \label{large_scale_pack_prop}
  The family \(\mathcal{F}\) satisfies the packing property.
\begin{proof}
Let \(x = (x_{1}, \dots, x_{n})\) be
an \(\varepsilon\)-minimum of \(f_{s}\).
We compare it with
\[
x^{*} \coloneqq \left(
  - \frac{s_{1}}{\sqrt[r]{M}}, \dots,
  - \frac{s_{M}}{\sqrt[r]{M}},
  0, \dots, 0 \right).
\]
Recall that \(r = \max\{p,2\}\).
The vector \(x^{*}\) lies in the unit \(L^{p}\)-ball,
as \(\norm[p]{x^{*}} = M^{1/p - 1/r} \leq 1\).

Therefore, as \(M < 1 / \varepsilon^{r}\),
we obtain for all \(i \in [M]\)
\begin{equation*}
\begin{multlined}
  s_i x_i \leq f_s(x_{1}, \dots, x_{n})
  \leq f_s^{\ast} + \varepsilon \\ \quad
  \leq f_s(x^{*}) + \varepsilon 
  = - \frac{1}{\sqrt[r]{M}} + \varepsilon < 0,
\end{multlined}
\end{equation*}
i.e., \(s_{i} = - \sign x_{i}\).
Hence every $\varepsilon$-minimum \(x\) uniquely
determines $s$, proving the packing property.
\end{proof}
\end{lemma}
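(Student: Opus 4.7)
The plan is to show that the function value at every $\varepsilon$-minimum of $f_s$ determines each sign $s_i$, so that distinct sign vectors induce disjoint $\varepsilon$-minima sets. The first step is to exhibit a concrete candidate $x^{*} \in B_p(0,1)$ that realizes a small value of $f_s$. The natural choice in the working basis is $x^{*}_i = -s_i/\sqrt[r]{M}$ for $i \in [M]$ and $x^{*}_i = 0$ otherwise, where $r = \max\{p,2\}$. With this point, each coordinate contribution is $s_i x^{*}_i = -1/\sqrt[r]{M}$, so $f_s(x^{*}) = -1/\sqrt[r]{M}$, and in particular $f_s^{*} \leq -1/\sqrt[r]{M}$.

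Next I need to verify $x^{*} \in B_p(0,1)$. For $p \geq 2$ the working basis equals the standard basis and $\norm[p]{x^{*}}^p = M \cdot (1/\sqrt[r]{M})^p = M^{1 - p/p} = 1$. For $1 \leq p < 2$ the orthogonal working basis vectors $\xi_i$ have unit $L^q$ norm but are not orthonormal in $L^p$; the cheap estimate $\norm[p]{x^{*}} \leq \tnorm{x^{*}}$ (valid because $p < 2$) combined with $\tnorm{x^{*}}^2 = M \cdot 1/\sqrt[r]{M}^2 = M/M = 1$ shows $x^{*}$ is feasible.

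For the main step, let $x$ be any $\varepsilon$-minimum of $f_s$. By definition $f_s(x) < f_s^{*} + \varepsilon \leq f_s(x^{*}) + \varepsilon = -1/\sqrt[r]{M} + \varepsilon$. Because $M < 1/\varepsilon^r$ (recorded after the definition of $M$), we have $1/\sqrt[r]{M} > \varepsilon$, so the right-hand side is strictly negative. Since $s_i x_i \leq \max_{j} s_j x_j = f_s(x)$, each coordinate satisfies $s_i x_i < 0$, forcing $s_i = -\sign(x_i)$. Thus $x$ uniquely determines the full sign vector $s$, and so the $\varepsilon$-minima of any two distinct $f_s, f_{s'}$ are disjoint.

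The only mild obstacle I anticipate is keeping the $p < 2$ and $p \geq 2$ cases uniform; the construction is done in the working basis where the $L^p$ norm is unfamiliar, so one must check carefully that $x^{*}$ lies in $B_p(0,1)$ and that the estimate $s_i x_i \leq f_s(x)$ still reads off individual signs. Once the inclusion $x^{*} \in B_p(0,1)$ is established through the uniform bound $\norm[p]{y} \leq \tnorm{y}$ for $p<2$, the rest is a one-line comparison using $M < 1/\varepsilon^r$.
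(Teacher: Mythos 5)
Your proposal is correct and follows essentially the same route as the paper's proof: the same candidate point \(x^{*}\) with coordinates \(-s_i/\sqrt[r]{M}\), the same feasibility check via \(\norm[p]{x^{*}} \leq \tnorm{x^{*}} = 1\) for \(p<2\), and the same chain of inequalities using \(M < 1/\varepsilon^{r}\) to force \(s_i x_i < 0\) and hence \(s_i = -\sign x_i\). No gaps.
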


Let
\(F \in \mathcal F\)
be chosen
uniformly at random, and let \(S\) be the associated
string of length \(M\) so that \(F = f_S\) and thus \(S\in\{-1, +1\}^{M}\) is 
uniformly distributed.

\subsubsection*{Reduction to the String Guessing Problem}

The main idea is that the algorithm learns solely
some entries \(S_{i}\) of the string \(S\) from an oracle answer.

\begin{lemma}
  \label{lem:learn-nonsmooth-large-scale}
  There is a single-coordinate local oracle \(\tilde{\mathcal{O}}\)
  that can be emulated by the String Guessing Oracle \(\mathcal{O}_{S}\).
\begin{proof}
To better suit the present problem,
we now use \(\pm 1\) for the values of bits of strings.

Given a query \(x\),
we introduce an ordering \(\prec\)
on the set of coordinates \(\{1, 2, \dots, M\}\):
we map each coordinate \(i\)
to the pair \((- \abs{x_{i}}, i)\),
and take the lexicographic order on these pairs, i.e.,
\begin{equation*}
  i_{1} \prec i_{2} \iff
  \begin{cases*}
    \abs*{x_{i_{1}}} > \abs*{x_{i_{2}}} & or \\
    \abs*{x_{i_{1}}} = \abs*{x_{i_{2}}}\ \wedge \ i_{1} \leq i_{2}.
  \end{cases*}
\end{equation*}
Let \(\sigma(1), \dots, \sigma(k)\) be the indices \(i \in [M]\)
put into \(\prec\)-increasing order with \(k\) the minimum 
between \(M\) and the \(\prec\)-first $i$ s.t. \(x_{i} = 0\).
Let \(s\) be the string of length \(k\)
with \(s_{j} = - \sign x_{\sigma(j)}\).
If \(x_{\sigma(j)} = 0\), we put \(s_{j} = +1\).
(The value \(-1\) would also do.)
The query emulation \(q\) is defined via
\(q(x) \coloneqq (s, \sigma)\).

We now define helper functions \(J\) and \(p\)
in \(x\) and an answer of \(\mathcal{O}_{S}\).
We set 
\begin{align*}
  J(x, \texttt{EQUAL}) &\coloneqq k,
  &
  p(x, \texttt{EQUAL}) &\coloneqq s_{k},\\
  J(x, j) &\coloneqq j,
  &
  p(x, j) &\coloneqq - s_{j}.
\end{align*}

For the remainder of the proof we drop the arguments of these
functions and simply write \(J\) and \(p\)
instead of
\(J(x, \mathcal{O}_{S}(q(x)))\) and
\(p(x, \mathcal{O}_{S}(q(x)))\), respectively to ease readability.

Actually, \(J\)
is the \(\prec\)-smallest index \(j\) with \(f_{S}(x) = S_{j} x_{j}\)
and if \(x_{j} = 0\) and \(j < \sigma(k)\)
then furthermore \(- e_{j} \in \partial f(x)\).
Note that \(p = S_{J}\) and
therefore \(p e_{J}\) is a subgradient of 
\(f_S\) at \(x\) (and also at \(x - t \sign(x) e_{j}\) for  small \(t
\geq 0\)).
These are local conditions uniquely determining \(J\) and \(p\).
Hence, \(J\) and \(p\) are local.

We define the query emulation \(a\) via
\(a(x, R) \coloneqq (p(x, R) x_{J(x, R)}, p(x, R) e_{J(x, R)})\).
Oracle \(\tilde{\mathcal{O}}\) is defined by the emulation
\( \tilde{\mathcal{O}}(x) = a(x, \mathcal{O}_{S}(q(x)))\),
which is clearly single-coordinate and local.
Thus \(\tilde{\mathcal{O}}(x)\) is a valid answer to query \(x\).
\end{proof}
\end{lemma}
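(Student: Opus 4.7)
The plan is to reduce evaluating a first-order oracle for $f_S(x) = \max_{i \in [M]} S_i x_i$ at any point $x \in B_p(0,1)$ to a single query to the String Guessing Oracle $\mathcal{O}_S$. Since $f_S$ is a pointwise maximum of linear coordinate functions, a valid first-order answer at $x$ has the form $(S_{j^*} x_{j^*}, S_{j^*} e_{j^*})$ for any maximizing index $j^*$; this is already single-coordinate, so it suffices to locate one $j^*$ and learn $S_{j^*}$ via a single oracle query.

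\textbf{Query emulation $q(x)$.} I would enumerate the coordinates in the $\prec$-order induced by the lexicographic key $(-\abs{x_i}, i)$, obtaining $\sigma(1), \sigma(2), \dots$, and truncate at the first $k$ with $x_{\sigma(k)} = 0$ (or at $M$ if no such position exists). At position $j$ I would bet $s_j = -\sign(x_{\sigma(j)})$, i.e.\ the sign that would \emph{minimize} $S_{\sigma(j)} x_{\sigma(j)}$, with the convention $s_k = +1$ when $x_{\sigma(k)} = 0$. Set $q(x) \coloneqq (s, \sigma)$. The intuition is that I am betting no coordinate contributes positively to the maximum; the first bet the oracle refutes is then a maximizing coordinate.

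\textbf{Answer emulation $a(x,R)$.} On a mismatch answer $m$, the definition of the String Guessing Oracle gives $S_{\sigma(m)} = -s_m = \sign(x_{\sigma(m)})$, so $S_{\sigma(m)} x_{\sigma(m)} = +\abs{x_{\sigma(m)}}$, while correct bets at positions $j < m$ yield $S_{\sigma(j)} x_{\sigma(j)} = -\abs{x_{\sigma(j)}} \leq 0$, and the $\prec$-ordering forces $\abs{x_{\sigma(j)}} \leq \abs{x_{\sigma(m)}}$ for $j > m$; hence $+\abs{x_{\sigma(m)}}$ is the maximum. Set $J \coloneqq \sigma(m)$, $p \coloneqq -s_m$, and return $(p\, x_J,\, p\, e_J)$. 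On an EQUAL answer all tested coordinates with $x_i \neq 0$ contribute non-positively, so $f_S(x) \leq 0$ is attained either by the zero coordinates (if $k < M$) or by the $\prec$-smallest index among those with the smallest $\abs{x_i}$; I would set $J \coloneqq \sigma(k)$, $p \coloneqq s_k$, and again output $(p\, x_J,\, p\, e_J)$.

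\textbf{Verification.} Three checks remain. (i) Correctness: $p\, x_J = f_S(x)$ by the case analysis above, and $p\, e_J$ is a subgradient because $y \mapsto S_J y_J$ is linear, bounded above pointwise by $f_S$, and equal to $f_S$ at $x$. (ii) Locality: both $J$ and $p$ are determined by the identity and sign of the linear piece active at $x$, which is a purely local feature of $f_S$; replacing $f_S$ with any $f_{S'}$ agreeing with $f_S$ in a neighborhood of $x$ leaves $J$ and $p$ unchanged. (iii) Single-coordinate: the output has the form $\lambda e_J$, and the secondary tiebreaker by coordinate index in $\prec$ forces $J$ to be the smallest index achieving the maximum. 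The main obstacle is the bookkeeping in (i), specifically the EQUAL branch with zero coordinates and ties in $\abs{x_i}$: one has to confirm that the lexicographic tiebreaker picks an index whose linear piece is truly active at $x$ and is minimal among such indices, so that the returned subgradient is simultaneously valid and single-coordinate.
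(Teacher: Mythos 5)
Your proposal is correct and follows essentially the same route as the paper: the same $\prec$-ordering by $(-\abs{x_i},i)$, the same query string $s_j=-\sign x_{\sigma(j)}$ truncated at the first zero coordinate, and the same answer emulation returning $(p\,x_J, p\,e_J)$ with the identical case analysis for mismatch versus \texttt{EQUAL}. You even state the coordinate index as $J=\sigma(m)$ explicitly, which is the intended reading of the paper's slightly looser $J(x,j)\coloneqq j$, and you flag the same tie-breaking subtleties the paper handles via its locality discussion.
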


We are ready to prove Theorem~\ref{thm:large-scale-ball}

\begin{proof}[Proof of Theorem~\ref{thm:large-scale-ball}]
The proof is analogous to the proof of
Theorem~\ref{th:box-nonsmooth-lower-bound}.
 Given the oracle \(\mathcal{O}\) in
Lemma~\ref{lem:learn-nonsmooth-large-scale}, every black box
algorithm \(A\) having access to this oracle solves
the String Guessing Problem for strings of
length \(M = \Theta(1/\varepsilon^{\max\{p,2\}})\) 
using the String Guessing Oracle only. Hence the
claimed lower bounds are obtained by  
Proposition~\ref{prop:string-via-oracle}. 
\end{proof}

\subsection{The low-scale case: reduction to the box case} \label{low-scale}

We show that for small accuracies, the $L^p$-ball lower bound 
follows from Theorem~\ref{th:box-nonsmooth-lower-bound}.
Note that the lower bound below is only optimal for \(p \geq 2\),
but it suffices for the optimal, more general bounds in
Section \ref{sec:oracle_ind_LB}.

\begin{proposition}
  \label{reduction_ball_to_box}
  Let $1\leq p<\infty$,
  and \(\varepsilon \leq n^{-\frac{1}{p} - \delta}\) with \(\delta > 0\).
  There exists a family \(\mathcal{F}\)
  of convex Lipschitz-continuous functions
  in the \(L^{p}\) norm
  with Lipschitz constant \(1\)
  on the \(n\)-dimensional unit Euclidean ball \(B_{p}(0,1)\),
  and a single-coordinate oracle for family \(\mathcal{F}\),
  such that
  both
  the distributional
  and the high-probability oracle complexity
  of finding an \(\varepsilon\)-minimum
  under the uniform distribution
  is
  \(\Omega\left( n \log \frac{1}{\varepsilon} \right)\).

 For algorithms with error probability at most \(P_e\),
 the distributional complexity is
 \(  \Omega \left( (1-P_e)n \log \frac{1}{\varepsilon} \right) \)
 and the high probability complexity is
 \(\Omega\left( n \log \frac{1}{\varepsilon} \right)\).
\begin{proof}
  The proof is based on a rescaling argument. 

  We have
  \([-\frac{1}{\sqrt[p]{n}},\frac{1}{\sqrt[p]{n}}]^{n}
  \subseteq B_{p}(0,1)\)
  and thus by Theorem~\ref{th:box-nonsmooth-lower-bound} there exists
  a family of convex Lipschitz-continuous functions
  with Lipschitz constant \(1\)
  (in the $L^{\infty}$ norm, therefore also in the $L^{p}$ norm),
  and a single-coordinate oracle for \(\mathcal{F}\),
  with
  both
  distributional oracle complexity
  and high-probability oracle complexity
  \(\Omega\left( n \log \frac{1}{\varepsilon \sqrt[p]{n}}\right) =
  \Omega\left( n \log  \frac{1}{\varepsilon}\right)\) for large
  \(n\), where the last equality follows from the fact that for
  \(\varepsilon \leq n^{- 1/p - \delta}\) with \(\delta > 0\)
  we have \(\varepsilon \sqrt[p]{n} \leq
  \varepsilon^{\frac{\delta}{1/p + \delta}}\).

\end{proof}
\end{proposition}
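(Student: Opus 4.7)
The plan is to apply Theorem~\ref{th:box-nonsmooth-lower-bound} to the largest axis-aligned box inscribed in \(B_p(0,1)\) and then verify that under the assumption \(\varepsilon \leq n^{-1/p - \delta}\) the resulting bound simplifies to \(\Omega(n \log(1/\varepsilon))\).

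First I observe that \([-R, R]^n \subseteq B_p(0,1)\) for \(R \coloneqq n^{-1/p}\), since for every \(x\) in this box \(\norm[p]{x} \leq (n R^p)^{1/p} = 1\). Applying Theorem~\ref{th:box-nonsmooth-lower-bound} to this box with \(L = 1\) produces a finite family \(\mathcal{F}\) of convex functions, each \(1\)-Lipschitz in the \(L^\infty\) norm, together with a single-coordinate local oracle whose distributional and high-probability oracle complexity (as well as its bounded-error variants) is \(\Omega(n \log(LR/\varepsilon)) = \Omega(n \log(1/(\varepsilon n^{1/p})))\). Because \(\norm[\infty]{x} \leq \norm[p]{x}\) for every \(p \geq 1\), the same family is automatically \(1\)-Lipschitz in the \(L^p\) norm, so the Lipschitz requirement of the proposition is met.

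Next I would lift this family from the inscribed box to the whole ball. Exploiting the max-of-coordinates form from~\eqref{box_family}, namely \(f_{s_1,\dots,s_n}(x) = \max_{i \in [n]} f_{s_i}(x_i)\), each one-dimensional component \(f_{s_i}\) can be extended from \([-R,R]\) to \(\mathbb{R}\) by affine continuation with slope \(\pm 1\) outside the endpoints. This preserves convexity, \(1\)-Lipschitzness in the \(L^\infty\) norm, and the single-coordinate subgradient structure of the oracle. Since the extended function still attains its minimum in \([-R,R]^n\), any \(\varepsilon\)-minimum on \(B_p(0,1)\) projects coordinate-wise (via clipping to \([-R,R]\)) to an \(\varepsilon\)-minimum on the box, so an algorithm for the ball problem automatically solves the box problem with the same number of oracle queries and inherits the latter's lower bound.

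Finally, the asymptotic conversion follows from a direct computation: the hypothesis \(\varepsilon \leq n^{-1/p - \delta}\) is equivalent to \(\varepsilon \cdot n^{1/p} \leq \varepsilon^{\delta/(1/p + \delta)}\), hence \(\log(1/(\varepsilon n^{1/p})) \geq \tfrac{\delta}{1/p + \delta}\log(1/\varepsilon)\), and the box-derived bound upgrades to \(\Omega(n \log(1/\varepsilon))\), as required; the bounded-error statement transfers verbatim because Theorem~\ref{th:box-nonsmooth-lower-bound} already provides its analogue. The only non-routine obstacle is checking that the extension off the inscribed box preserves both convexity and the single-coordinate oracle structure, and the max-of-coordinates construction~\eqref{box_family} makes this transparent; without that structural feature one would need a more delicate Lipschitz-convex extension argument.
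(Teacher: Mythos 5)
Your proposal is correct and follows essentially the same route as the paper: inscribe the box \([-n^{-1/p}, n^{-1/p}]^{n}\) in \(B_{p}(0,1)\), invoke Theorem~\ref{th:box-nonsmooth-lower-bound} with \(R = n^{-1/p}\), and use \(\varepsilon \leq n^{-1/p-\delta}\) to turn \(\Omega(n\log\frac{1}{\varepsilon\sqrt[p]{n}})\) into \(\Omega(n\log\frac{1}{\varepsilon})\). Your explicit affine extension of the one-dimensional components off the inscribed box (together with the clipping argument for \(\varepsilon\)-minima) is a detail the paper's terse proof leaves implicit, and it is a correct way to make the family well-defined on all of \(B_{p}(0,1)\) while preserving the single-coordinate oracle and the packing property.
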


For
the case of the \(L^p\)-ball with \(1 \leq p < \infty\), we
thus close the gap exhibited in Figure~\ref{fig:smallVsLarge} for
arbitrary small but fixed \(\delta > 0\).

\begin{remark}[Understanding the dimensionless speed up in terms of entropy]
  The observed (dimensionless) performance for the \(L^p\)-ball, for $2\leq p<\infty$,  has
  a nice interpretation when comparing the total entropy of the function
  families. Whereas in
  the unit box we could pack up to roughly \(2^{n
    \log \frac{1}{\varepsilon}}\) instances with
  nonintersecting \(\varepsilon\)-solutions, we can only pack
  roughly  \(2^{1/\varepsilon^p}\) into the $L^p$-ball. This drop in
  entropy alone can explain the observed speed up. 

We give some intuition by comparing the volume of the unit box
with the volume of the inscribed unit \(L^{p}\)-ball.
Suppose that there are
\(K_{n} \approx 2^{n \log 1/\varepsilon}\)
\lq{}equidistantly\rq{} packed instances in the box;
this number is roughly the size of the function family used above.
\begin{figure}[htb]
  \centering
  \begin{tikzpicture}[x=.7mm, y=.7mm]
    \draw[very thick] ( 0, 0) rectangle (40,40)
                      (20,20) circle[radius=20];
    \foreach \x in {2,6,...,38}
      \foreach \y in {2,6,...,38}
      {\filldraw (\x,\y) circle[radius=.5];
       \draw     (\x,\y) circle[radius= 2];}
  \end{tikzpicture}
\caption{\label{fig:equidistant-packing}%
  Equidistantly packed points with a neighbourhood
  in a ball and a box.
  The number of points in each is proportional to its volume.}
\end{figure}
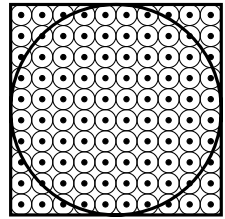
Intersecting with the \(L^{p}\)-ball,
see Figure~\ref{fig:equidistant-packing} for an illustration,
we end up with roughly \(K_{n} V_{n}\) instances,
where \(V_{n} = (2 \Gamma(1/p + 1))^{n} / \Gamma(n/p + 1)\)
is the volume of the unit ball.
For the boundary case \(\varepsilon =1/\sqrt[p]{n}\):
\begin{equation*}
 \begin{split}
  \entropy{F} \approx& \,\log K_{n} V_{n}
  \\
  \approx& \,\,
  n \log n^{1/p} +
  n \left(
    1 + \frac{1}{p} \log \frac{1}{p} - \frac{1}{p}
    + \log \sqrt{2 \pi}
  \right) \\ 
  &-
  \left(
    \frac{n}{p} \log \frac{n}{p} - \frac{n}{p}
    + \log \sqrt{2 \pi}
  \right)
  \\
  \approx&
  \,\,n \left(
    1 + \log \sqrt{2 \pi}
  \right)
  \approx
  \frac{1 + \log \sqrt{2 \pi}}{\varepsilon^{p}},
 \end{split}
\end{equation*}
i.e., the entropy of the function family in the ball drops
significantly, being in line with the existence of fast methdods in
this case.
\end{remark}

\section{Lower bounds for arbitrary local oracles}
\label{sec:oracle_ind_LB}

We extend our results in Sections
\ref{sec:oracle-compl-box}  and \ref{sec:large-scale-compl}
to arbitrary local oracles. The key observation 
is that for query points where the instance is locally linear 
the subdifferential is a singleton, and thus 
any local oracle reduces to the single-coordinate oracle
studied in previous sections.
Thus, we prove lower bounds by perturbing our instances
in such a way that we avoid singular
query points with probability one.

We present full proofs for expected case (distributional) lower bounds, 
however observe that lower bounds w.h.p. (and with bounded error)
follow analogous 
arguments by averaging on conditional probabilities, instead of 
conditional expectations. 

Before going into the explicit constructions, let us present a useful tool
for analyzing arbitrary local oracles. We will show there exists a 
\emph{maximal oracle} \(\mathcal{O}\) such that any local oracle
can be emulated by  \(\mathcal{O}\). This way, it suffices to show
lower bounds on \(\mathcal{O}\) to deduce lower bounds for arbitrary local
oracles.

\begin{definition}[Maximal oracle]
Let $\mathcal{F}$ be a family of real-valued functions over 
a domain $X$.
We define the \emph{maximal oracle} \(\mathcal{O}\) as the one that for
query $x\in X$ provides as answer 
\[ \mathcal{O}_f(x)\coloneqq\{g\in \mathcal{F}:\,\, \exists B
\mbox{ neighborhood of $x$ s.t. } f|_B\equiv g|_B\}, \]
where in the expression above the neighborhood $B$ of $x$ 
possibly depends on $g$.
\end{definition}
By definition, \(\mathcal{O}\) is a local oracle. Let us now prove the
claimed property.


\begin{lemma}
\label{lem:maximalOracle}
Let \(\mathcal{F}\) be a family of real-valued functions. Then the maximal
oracle  \(\mathcal{O}\) is such that any local oracle \(\mathcal{O}^{\prime}\)
can be emulated by \(\mathcal{O}\)
\begin{proof}
Let $\mathcal{O}^{\prime}$ be any local oracle, and $x$ be a query point.
Let the query emulation be the identity, i.e., \(q(x) \coloneqq x\).
Now, for the answer emulation,
by definition, for instances \(f,g\in\mathcal{F}\),
we have \(\mathcal{O}_f(x)=\mathcal{O}_g(x)\)
if and only if $f= g$ around $x$. Therefore the function 
$a(x,\mathcal{O}_f(x)) = \mathcal{O}^{\prime}_f(x)$ is well-defined; this 
defines an oracle emulation of \(\mathcal{O}^{\prime}\) by \(\mathcal{O}\), proving
the result.
\end{proof}
\end{lemma}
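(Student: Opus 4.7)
The plan is to construct the emulation by Definition~\ref{def:oracle-emulation}, i.e., a pair of query and answer emulation functions. Since both oracles handle the same family \(\mathcal{F}\) and the same query space \(X\), I would take the query emulation \(q\) to be the identity, so the real work is in defining a well-defined answer emulation \(a\colon X \times R \to R'\) (where \(R\) and \(R'\) are the ranges of \(\mathcal{O}\) and \(\mathcal{O}'\), respectively) satisfying \(\mathcal{O}'_f(x) = a(x, \mathcal{O}_f(x))\) for every \(f \in \mathcal{F}\) and every \(x \in X\).

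The key conceptual observation is that the answer of the maximal oracle at \(x\) is precisely the equivalence class of the instance under the relation ``equal in some neighborhood of \(x\)''. Concretely, I would verify the two directions: first, if \(f\) and \(g\) agree in a neighborhood of \(x\), then by transitivity the set of functions locally equal to \(f\) coincides with the set of functions locally equal to \(g\), giving \(\mathcal{O}_f(x) = \mathcal{O}_g(x)\); conversely, since \(f\in \mathcal{O}_f(x)\) trivially, the equality \(\mathcal{O}_f(x) = \mathcal{O}_g(x)\) forces \(f \in \mathcal{O}_g(x)\), i.e., \(f=g\) in some neighborhood of \(x\). Hence the maximal oracle answer determines, and is determined by, the local behavior around \(x\).

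Using locality of \(\mathcal{O}'\) now closes the argument. Because \(\mathcal{O}'_f(x)\) depends only on the behavior of \(f\) in an arbitrarily small neighborhood of \(x\), the value \(\mathcal{O}'_f(x)\) is a function of the pair \((x, \mathcal{O}_f(x))\). I would therefore define
\[
  a(x, R) \coloneqq \mathcal{O}'_f(x)
\]
for any representative \(f\) with \(\mathcal{O}_f(x)=R\); by the equivalence above this choice is independent of the representative, so \(a\) is well-defined, and by construction \(\mathcal{O}'_f(x) = a(x, \mathcal{O}_f(x))\) for all \(f,x\). Together with \(q = \mathrm{id}\), this gives the desired emulation.

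The main potential pitfall is the well-definedness of \(a\): one must be careful that the ``local equality'' relation used to define \(\mathcal{O}\) really matches the notion of locality used in the definition of local oracles (in both cases ``equal on some open neighborhood of \(x\)''). Finiteness of \(\mathcal{F}\) is used only implicitly to guarantee that \(\mathcal{O}\) takes values in a well-defined (finite) set of subsets, so no measurability or ordering issues arise; once that is clear, no further estimates are needed.
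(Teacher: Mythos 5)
Your proposal is correct and follows essentially the same route as the paper: query emulation is the identity, and the answer emulation is well-defined because the maximal oracle's answer determines the local-equality class of the instance at \(x\), which by locality of \(\mathcal{O}'\) determines \(\mathcal{O}'_f(x)\). The only difference is that you explicitly verify that ``equal in a neighborhood of \(x\)'' is an equivalence relation and that \(\mathcal{O}_f(x)=\mathcal{O}_g(x)\) iff \(f=g\) near \(x\), which the paper asserts without proof.
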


For the rest of the section, let \(\tilde{\mathcal{O}}\) be the single-coordinate
oracle studied in previous sections, and let \(\mathcal{O}\) be the maximal
oracle. Note that we state the theorems below for an arbitrary
local oracle \(\mathcal{O}\), but from Lemma~\ref{lem:maximalOracle}
w.l.o.g. we may
choose for the proofs \(\mathcal{O}\) to be the maximal oracle.

\subsection{Large-scale complexity for $L^p$-Balls}
\label{sec:ellpPerturb}

Recall that in Section~\ref{sec:large-scale-case},
different function families were used for the case $1\leq p <2$
and \(2 \leq p <\infty\).
However, the proof below is agnostic to which family is used.

\begin{theorem} \label{perturb-large-scale}
Let $1\leq p <\infty$, $\varepsilon\geq 1/n^{\max\{p,2\}}$ and let
\(\mathcal{O}\) be an arbitrary local oracle
for the family \(\mathcal{F}\)
of convex Lipschitz-continuous functions in the \(L^p\) norm
with Lipschitz constant \(1\)
on the $n$-dimensional unit ball $B_p(0,1)$.
Then both the distributional
and the high-probability oracle complexity
of finding an $\varepsilon$-minimum 
is $\Omega(1/\varepsilon^{\max \{p, 2\}})$.

For bounded-error algorithms with error bound \(P_e\),
the distributional complexity is
\(  \Omega \left( (1-P_e)/{\varepsilon^{\max\{p,2\}}} \right) \),
and the high probability complexity is
\(\Omega\left( 1/\varepsilon^{\max\{p,2\}} \right)\).
\end{theorem}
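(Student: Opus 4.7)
The plan is to reduce to Theorem~\ref{thm:large-scale-ball} by a random perturbation of the hard family from Section~\ref{sec:large-scale-case}. By Lemma~\ref{lem:maximalOracle}, it suffices to prove the bound for the maximal local oracle, which returns the germ of $f$ at the query point, since every other local oracle is emulated by it. Consider the perturbed family $f_{s,\delta}(x) \coloneqq \max_{i \in [M]}(s_i x_i + \delta_i)$ indexed by $s\in\{-1,+1\}^M$ and a small shift $\delta \in [0,\eta]^M$, with $\eta$ chosen much smaller than $\varepsilon$ and smaller than $1/\sqrt[r]{M}$. With $S$ uniform in $\{-1,+1\}^M$ and $\delta$ drawn independently from Lebesgue measure on $[0,\eta]^M$, the functions $f_{S,\delta}$ are still $1$-Lipschitz in $L^p$; the argument of Lemma~\ref{large_scale_pack_prop} goes through with $\varepsilon$ replaced by $\varepsilon - O(\eta)$, so the packing property, and hence the equivalence between $\varepsilon$-optimization and recovering $S$, is preserved.

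The key observation is that whenever the maximum defining $f_{s,\delta}$ at a query $x$ is attained at a unique index $J$ with $s_J x_J + \delta_J \neq 0$, the function is affine in a neighbourhood of $x$. At such a point, the germ $[f_{s,\delta}]_x$ contains exactly the same information as the single-coordinate oracle $\tilde{\mathcal{O}}$ of Lemma~\ref{lem:learn-nonsmooth-large-scale}: the pair $(J,s_J)$ and the value $s_J x_J + \delta_J$. The set of \emph{bad} configurations---where two indices tie or the maximal coordinate function vanishes---is, for every fixed $x$, a finite union of affine hyperplanes in $\delta$-space, hence of $\delta$-measure zero.

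Given any deterministic algorithm using the maximal oracle, queries are measurable functions of the transcript. By induction on the query count, conditionally on $(S=s,\Pi_{<t})$ the law of $\delta$ remains absolutely continuous with respect to Lebesgue measure, because each germ-valued answer restricts $\delta$ only through finitely many strict linear inequalities describing the locally affine piece. Consequently the next query $Q_t$ almost surely avoids the singular set of $f_{S,\delta}$, and the maximal oracle's response agrees, via a canonical answer emulation, with $\tilde{\mathcal{O}}_{f_{S,\delta}}(Q_t)$. This yields an almost-sure emulation of $\tilde{\mathcal{O}}$ on the perturbed family by the maximal oracle, and passing through Lemma~\ref{lem:learn-nonsmooth-large-scale} and Proposition~\ref{prop:string-via-oracle} exactly as in the proof of Theorem~\ref{thm:large-scale-ball} delivers the claimed $\Omega(1/\varepsilon^{\max\{p,2\}})$ lower bounds in all three regimes (distributional, high-probability, bounded-error); the measure-zero failure event contributes $o(1)$ and is absorbed into the error budget.

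The main obstacle is the inductive absolute-continuity claim: one must verify that no adaptive sequence of germ answers can collapse the posterior of $\delta$ onto the singular set. The crucial fact is that a germ at a locally affine point depends on $\delta$ only through strict inequalities $s_J x_J + \delta_J > s_i x_i + \delta_i$ for $i \neq J$ (plus the affine expansion itself), all of which define open, hence full-measure, subsets of the current conditional support; equality cases, which would genuinely pin down $\delta$, occur with conditional probability zero throughout the run.
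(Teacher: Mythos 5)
Your proposal is correct and follows essentially the same route as the paper: a random perturbation of the large-scale family, reduction to the maximal oracle via Lemma~\ref{lem:maximalOracle}, an inductive absolute-continuity argument showing the transcript constrains the unrevealed perturbations only through strict (hence full-measure) inequalities so that queries almost surely avoid singular points, and an almost-sure emulation by \(\tilde{\mathcal{O}}\) feeding into Lemma~\ref{lem:learn-nonsmooth-large-scale} and Proposition~\ref{prop:string-via-oracle}. The only cosmetic differences are that you perturb the values (\(\max_i(s_i x_i + \delta_i)\)) rather than the argument (\(f_s(x+\delta)\)), which spares you the paper's step of restricting to the shrunken ball \(B_p(-\delta, 1-\varepsilon/K)\), and that your inductive claim should be phrased as absolute continuity of the \emph{not-yet-revealed} coordinates of \(\delta\) only (the winning coordinate's \(\delta_J\) is pinned down exactly by each answer), as the paper does with its set \(I_c^t\).
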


Before proving the theorem let us introduce the hard function
family, which is a perturbed version of the hard instances in
Section~\ref{sec:large-scale-case}.

\subsubsection*{Construction of function family}

Let $1\leq p<\infty$,  $\varepsilon \geq 1/n^{\max\{p,2\}}$, and 
$X\coloneqq B_p(0,1)$. Let \(M\) and \(f_s\) be defined as in 
the proof of Theorem~\ref{thm:large-scale-ball}, and 
$\bar\delta \coloneqq \varepsilon/(KM)$, where $K>0$ is a constant.
Consider the infinite family
$\mathcal{F} \coloneqq
\left\{ f_{s,\delta}(x): s\in\{-1,+1\}^M, \, \delta\in[0,\bar\delta]^M \right\}$, where
\[ f_{s,\delta}(x) = f_s(x+\delta). \]
Finally, we consider the random variable $F=f_{S,\Delta}$ on
$\mathcal{F}$
where $S\in\{-1,1\}^M$ and $\Delta\in[0,\bar\delta]^M$ are chosen
independently and uniformly at random.

\begin{proof}
The proof requires two steps: 
first, showing that the subfamily
of instances with a fixed perturbation $\delta$
is as hard as the unperturbed one for the single-coordinate 
oracle. Second, by properly averaging over $\delta$
we obtain the expectation lower bound.

\paragraph*{Lower bound for fixed perturbation under oracle $\tilde{\mathcal{O}}$}

Let $\delta\in[0,\bar\delta]^M$ be a fixed vector, and
$\tilde{\mathcal{F}} = \{f_{s,\delta} \colon s \in \{-1,+1\}^M\}$.
Since $f_{s,\delta}(x)=f_s(x+\delta)$, for a fixed perturbation the
subfamily of instances is just a re-centering of the unperturbed ones.
We claim that the complexity of this family under $\tilde{\mathcal{O}}$ 
is lower bounded by $\expectation{T} \geq \frac{M(1-\varepsilon/K)}{2}$.

In fact, consider the ball
$B_p(-\delta,r)$, where $r=1-\varepsilon/K$. Let $x\in B_p(-\delta,r)$,
then
\[ \norm[p]{x} \leq \norm[p]{x + \delta} + M \bar{\delta}
\leq 1 - \varepsilon/K + \varepsilon/K = 1,\]
so $x\in B_p(0,1)$. Therefore, $B_p(-\delta,r)\subseteq B_p(0,1)$, 
and thus the complexity of $\tilde{\mathcal{F}}$ over $B_p(0,1)$ can 
be lower bounded by
the complexity of the same family over $B_p(-\delta,r)$ (optimization
on a subset is easier in terms of oracle complexity). Now observe
that the problem of minimizing $\tilde{\mathcal{F}}$
over $B_p(-\delta,r)$ under $\tilde{\mathcal{O}}$ is equivalent to
the problem studied in Section~\ref{sec:large-scale-case}, 
only with the radius scaled by $r$. This re-scaled problem has the 
same complexity as the original one, only with an extra $r$ factor. 
Thus,
\[ \expectation{T} \geq  \frac{Mr}{2} = \frac{M(1-\varepsilon/K)}{2}
\quad \forall \delta\in[0,\bar\delta]^M.\]

\paragraph*{Lower Bounds for \(\mathcal{F}\) under oracle $\mathcal{O}$}

To conclude our proof, we need to argue that oracle $\mathcal{O}$
does not provide more information than $\tilde{\mathcal{O}}$
with probability \(1\).
Let $A$ be an algorithm and $T$ the number of queries it
requires to determine $S$ (which is a random variable in both $S$
and $\Delta$). 

We will show first that throughout its trajectory $(X^1,\ldots,X^T)$, 
algorithm $A$ queries singular points of \(f_{S,\Delta}\) with 
probability zero. Formally, we have

\begin{lemma}[of unpredictability, large-scale case]
\label{lem:unpredictability-ball-case}
For an \(\mathcal{O}\)-based algorithm solving family \(\mathcal{F}\)
with queries \(X^1,\ldots,X^T\) we define, 
for \(t\geq 0\), the set of maximizer coordinates as
\[ I^t\coloneqq \{ i\in[M]: S_i(X_i^t+\Delta_i) = f_{S,\Delta}(X^t) \}\]
if \(t\leq T\), and \(I^t=\emptyset\) otherwise, and let us consider the 
event \(E\) where the set of maximizers include at most one
new coordinate at each iteration
\[E \coloneqq \left\{
    I^t \subseteq \bigcup_{s<t} I^s \text{ or } \abs*{I^{t}}
  \leq 1, \quad \forall t\leq T
\right\}.\]
Then $\probability{E}=1$.
\begin{proof}
We prove by induction that before every query \(t\geq 1\) the set 
of `unseen' coordinates
\(I_c^t \coloneqq [M]\setminus (\bigcup_{s< t} I^s)\)
is such that perturbations \((\Delta_i)_{i\in I_c^t}\) are absolutely 
continuous. Moreover, from this we
prove simultaneously that
\[\probability[\Pi_{<t}]
  I^{t} \nsubseteq \bigcup_{s< t} I^s \text{ and }
  {\abs*{I^{t}} > 1}
  = 0.\]

We start from the base case \(t=0\), which is evident since the 
distribution on $\Delta$ is uniform. Now, since singular points 
(for all possible realizations of \(S\)) lie in a smaller dimensional 
manifold, then $\abs{I^1}=1$ almost surely. In the inductive step, 
suppose the claim holds up to \(t\) and consider the 
\((t+1)\)-th query. Then
what the transcript provides for coordinates in \(I_c^{t+1}\)
are upper bounds for the perturbations $\Delta_i$
given $S_i$.
In fact, from the \((t+1)\)-th oracle answer all we obtain are $S_j$ and
$\Delta_j$, where $j$ is such that
$f_{S,\Delta}(X^{t+1})=S_j X_j^{t+1} + \Delta_j$; note that
such \(j\) is almost surely unique among \(j\in I_c^t\),
by induction.  For the rest of the
coordinates $i\neq j$ we implicitly know  
\[ S_i X_i^{t+1}+\Delta_i \leq S_j X_j^{t+1} + \Delta_j,\]
i.e., $\Delta_i \leq D_{i,+}$ if $S_i =1$, and $\Delta_i \leq D_{i,-}$ if 
$S_{i} =-1$; where $D_{i,\pm}$ are constants depending on $
(X^{0},\ldots, X^{t+1})$, $S_j$, $\Delta_j$, but not depending
on any of the other unknowns. Thus, at every
iteration we obtain for non-maximizer coordinates upper bounds 
on the perturbation $\Delta_i$, conditionally on the sign of $S_i$.
By absolute continuity,
almost surely the \(S_{i} X_{i}^{t+1} + \Delta_{i} \neq S_{k}
X_{k}^{t+1} + \Delta_{k}\) for any unseen coordinate \(i \in I_{c}^{t}\)
and any other possibly seen coordinate \(k \neq i\).
In particular, $\Delta_i=D_{i,\pm}$ with probability zero
for \(i \in T_{c}^{t+1}\),
and thus the distribution
on $(\Delta_{i})_{i\in I_c^{t+1}}$ (conditionally on the transcript),
which is the one described above, is absolutely continuous. 
Moreover,
\[ \probability[\Pi_{\leq t}]{I^{t} \nsubseteq \bigcup_{s\leq t} I^s
    \text{ and } \abs*{I^{t+1}} > 1}
  = 0, \]
proving the inductive step.

Finally, by the union bound
\(\probability{E} = 1\) follows.
\end{proof}
\end{lemma}

With the Lemma on unpredictability the proof becomes 
straightforward.
We claim that on event $E$, the oracle answer provided by 
$\mathcal{O}$ can be essentially emulated by the answer provided by
$\tilde{\mathcal{O}}$ on the same point; thus, the trajectory of $A$ 
is equivalent to the trajectory of some algorithm querying
$\tilde{\mathcal{O}}$.
Actually, the oracle \(\mathcal{O}\) emulated
will not be exactly the maximal oracle
on event \(E\) but provide the same \emph{new} information:
one also needs the oracle answers from preceding
iterations to recover the maximal oracle answer.

To prove our claim,
recall that the answer of $\tilde{\mathcal{O}}$
is essentially $(j; s_j,\delta_j)$,
where $j$ is a maximizer coordinate.
We define the emulated oracle \(\mathcal{O}\) to return
$\{f_{r,\gamma} \colon f_{r,\gamma}(x) \leq s_{j} x_{j} + \delta_{j},\quad
r_j = s_j,\,\, \gamma_j=\delta_j\}$.
for query \(x\) on the family of perturbed instances \(\mathcal{F}\).
We observe that on event $E$ by the lemma,
all maximizing coordinates are contained in
\(\bigcup_{s\leq t} I^{s}\),
and thus the maximizing oracle answer in iteration \(t\) is
\[\left\{
    \begin{aligned}
      f_{r,\gamma} \colon
      &
      f_{r, \gamma}(x) \leq s_{j} x_{j}^{t+1} + \delta_{j}
      r_i = s_i,\,\, \gamma_i=\delta_i
      \\
      &
      \text{ for } i \in \bigcup_{s\leq t} I^{s} \text{ and }
      s_{i} x_{i}^{t} + \delta_{i} = s_{j} x_{j}^{t} + \delta_{j}
    \end{aligned}
  \right\}
\]
indeed computable from preceding oracle answers.

By the claim we conclude that
for all $\delta$ under the almost sure event \(E\), we have
$ \expectation[\Delta = \delta]{T}
\geq \frac{M (1 - \varepsilon / K)}{2}$.
By averaging over $\delta$ we obtain
$ \expectation{T} \geq \frac{M (1 - \varepsilon / K)}{2}$.
By choosing $K>0$ arbitrarily large we obtain the desired lower bound.
\end{proof}


\subsection{Complexity for the box}
\label{sec:perturbed-box}

For the box case we will first introduce the family construction,
which turns out to be slightly more involved than the one in 
Section~\ref{sec:oracle-compl-box}. Similarly as in the large-scale
case, we first analyze the perturbed family for a fixed perturbation
under the single-coordinate oracle, and then we prove the Lemma
on unpredictability. With this the rest of the proof is analogous to the 
large-scale case and thus left as an exercise.

\begin{theorem}
  \label{th:perturbed-box-lower-bound}
  Let \(L, R > 0\), and
  let \(\mathcal{O}\) be an arbitrary local oracle for the family
  \(\mathcal{F}\) of Lipschitz-continuous convex functions
  on the $L^{\infty}$-ball
  \(B_{\infty}(0,R)\) with Lipschitz constant $L$ in the
  $L^{\infty}$ norm. Then both the distributional
  and the high-probability oracle complexity
  for finding an \(\varepsilon\)-minimum
  is \(  \Omega \left( n \log \frac{LR}{\varepsilon} \right) \).
  
  For bounded-error algorithms with error bound \(P_e\),
  the distributional complexity is
   \(  \Omega \left( (1-P_e)n \log \frac{LR}{\varepsilon}\right) \),
  and the high-probability complexity is
  \(\Omega\left( n \log \frac{LR}{\varepsilon} \right)\).
\end{theorem}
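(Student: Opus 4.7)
The plan is to mirror the proof of Theorem~\ref{perturb-large-scale}: introduce a randomly perturbed version $\mathcal{F}$ of the box family from Section~\ref{sec:oracle-compl-box}, lower bound its complexity under the single-coordinate oracle $\tilde{\mathcal{O}}$ for every fixed perturbation by a rescaling argument, and then show via a Lemma on unpredictability that with probability one an arbitrary local oracle reveals no more than $\tilde{\mathcal{O}}$. By Lemma~\ref{lem:maximalOracle} it suffices to handle the maximal oracle. Concretely, set $M = \lfloor(1/3)\log(LR/\varepsilon)\rfloor$, $\bar\delta = \varepsilon/(KnM)$ for a large constant $K$, and let $f_{s,\delta}(x_1,\dots,x_n) = \max_{i\in[n]} f_{s_i}(x_i + \delta_i^{\mathrm{cum}})$ where the perturbation vector $\delta_i \in [0,\bar\delta]^M$ shifts the nested breakpoints $I_{s_i|_j}(0)$ of the recursive construction of $f_{s_i}$ by random amounts depending on $\delta_{i,1},\dots,\delta_{i,j}$. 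Let $F = f_{S,\Delta}$ with $S,\Delta$ independent and uniform.

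For fixed $\delta$, the perturbed instance is a coordinatewise translation of $f_s$ by at most $M\bar\delta = \varepsilon/(Kn)$ in the sup-norm, so an appropriate sub-box $B_\infty(-c,R-\varepsilon/K) \subseteq B_\infty(0,R)$ contains a translate of the original domain. Restricting the optimization to this sub-box and invoking Theorem~\ref{th:box-nonsmooth-lower-bound} for accuracy $\varepsilon$ and radius $R - \varepsilon/K \gtrsim R$ yields the bound $\Omega(n\log(LR/\varepsilon))$ under $\tilde{\mathcal{O}}$ for every fixed $\delta$, uniformly in the perturbation.

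The heart of the proof is the Lemma on unpredictability. Call a point $x$ \emph{singular} for $f_{s,\delta}$ if either (a) two coordinates simultaneously achieve the outer maximum $\max_i f_{s_i}(x_i + \delta_i^{\mathrm{cum}})$, or (b) the unique maximizing coordinate $i$ has $x_i$ lying exactly on a (perturbed) midpoint $I_{s_i|_j}(0)$ for some $j \leq M$. At a non-singular point, the local behavior of $f_{s,\delta}$ is determined by one coordinate $i$ together with precisely the prefix $s_i|_{j_i}$ and the perturbations $(\delta_{i,1},\dots,\delta_{i,j_i})$, where $j_i$ is the escape depth of $x_i$; consequently, the maximal oracle's answer at a non-singular point is emulable by $\tilde{\mathcal{O}}$ exactly as in Lemma~\ref{lem:learn-nonsmooth-adversarial}. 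By induction on $t$, conditional on the transcript $\Pi_{<t}$ the still-undetermined components of $\Delta$ retain densities with respect to Lebesgue measure, because each prior oracle answer only carves out half-spaces or intervals from their support and never concentrates any atoms. Since the singular configurations at query $t$ lie in a codimension-$\geq 1$ submanifold depending nontrivially on these continuous components, the $t$-th query is singular with conditional probability zero, and a union bound over the almost-surely finite $T$ gives $\probability{E} = 1$ for the event $E$ that no query is singular.

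On $E$, Definition~\ref{def:oracle-emulation} is satisfied and the trajectory of any algorithm using $\mathcal{O}$ can be emulated on the same perturbed family by an algorithm using $\tilde{\mathcal{O}}$. Combining with the fixed-$\delta$ bound gives $\expectation[\Delta=\delta]{T} \geq \Omega(n\log(LR/\varepsilon))$ for almost every $\delta$; averaging yields the distributional statement, taking $K$ large. The high-probability and bounded-error variants follow by the same reasoning with conditional probabilities replacing conditional expectations, as in Lemma~\ref{lem:EntropyLB} and Proposition~\ref{prop:string-via-oracle}. The main obstacle is step three: the recursive nested-interval structure of the box construction makes the singular set subtler than in the large-scale case (as flagged by the paper's footnote), and one must verify both that non-singular answers really are emulable by $\tilde{\mathcal{O}}$ and that each oracle answer only imposes finitely many linear inequalities on $\Delta$, thereby preserving absolute continuity throughout the induction.
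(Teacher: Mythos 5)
Your high-level architecture is the right one and matches the paper's: reduce to the maximal oracle via Lemma~\ref{lem:maximalOracle}, perturb the box family, prove a fixed-perturbation bound under \(\tilde{\mathcal{O}}\), establish an unpredictability lemma so that \(\mathcal{O}\) is emulable by \(\tilde{\mathcal{O}}\) almost surely, and average. But the construction you propose does not support the argument. First, it is internally inconsistent: \(f_{s_i}(x_i+\delta_i^{\mathrm{cum}})\) is a single translation per coordinate, yet you also want level-dependent shifts of the nested breakpoints. If you really use one translation per coordinate (which is what your ``coordinatewise translation, so restrict to a sub-box'' step requires), the unpredictability step collapses: after one informative query per coordinate the translation is fully revealed, and since in the unperturbed family the values at the breakpoints and midpoints are \emph{deterministic} and \emph{identical across coordinates} (every \(f_{s_i}\) takes the value \(b_{l+1}\) at the midpoint of \(I_{s_i|_l}\) regardless of \(s_i\)), the algorithm can then query the now-known shifted midpoints of all \(n\) coordinates simultaneously; all coordinates tie, and the germ of the max reveals one new bit of \emph{every} string in a single query, giving an \(O(M)\) algorithm and destroying the \(\Omega(nM)\) bound. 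Fresh randomness at every level of the recursion is therefore essential --- but then the perturbed instance is no longer a translate of \(f_s\), and your fixed-\(\delta\) step (invoking Theorem~\ref{th:box-nonsmooth-lower-bound} on a sub-box) no longer applies; the packing property and the String Guessing emulation must be re-proved for the perturbed family, which is what the paper does (with a modified \(M=\lfloor\frac{1}{3-\ln\alpha}\ln(1/\varepsilon)\rfloor\)).

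Second, your unpredictability lemma does not confront the actual difficulty flagged by the paper's footnote. Declaring ties and exact midpoints ``singular'' and arguing that answers ``only carve out half-spaces'' is not enough: the crux is showing that the level-\((l+1)\) randomness remains conditionally absolutely continuous, i.e., that no query outside the level-\(l\) set leaks \(\delta_{l+1},\dots,\delta_M\), and that the set of maximizer (coordinate, level) pairs grows by at most one per query. The paper engineers this by perturbing \emph{values} rather than positions --- reducing the slope by a factor \(\alpha\) and pushing the next level down by \(\delta_{l+1}\) so that \(f_{s0,\delta}=\max\{g_{s0,\delta},f_{s,\delta}\}\) is \emph{smooth} at \(I_s(\pm1)\) and its germ outside \(\interior{I_s^{\delta}}\) is literally independent of the deeper perturbations (Lemma~\ref{lem:unpredictability-box-case}, property \ref{item:perturbed-family-increasing}). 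Your kinked, position-shifted functions retain nonsmooth junctions whose locations are coupled across consecutive levels (the junction between the level-\(l\) and level-\((l+1)\) affine pieces depends on both shifts), and you neither verify convexity and the nesting properties for this modified recursion nor prove the conditional independence that the induction needs. Without that, the claim that the maximal oracle is emulable by \(\tilde{\mathcal{O}}\) with probability one is unsubstantiated.
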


As in Section~\ref{sec:oracle-compl-box}, w.l.o.g. we prove the
Theorem for \(L=R=1\), and recall that w.l.o.g. \(\mathcal{O}\)
is the maximal oracle.

\subsubsection*{One dimensional construction of function family}
First we define the perturbed instances for the one dimensional family.
The multidimensional family will be defined simply as the maximum of one
dimensional functions, as in \eqref{box_family}.

We will utilize different perturbations for each level in the recursive
definition of the functions.
For this reason, in order to preserve convexity, and 
in order to not reveal the behavior of lower levels through perturbations, we 
need to patch the perturbations of consecutive levels in a consistent way. 

Given \(0<\varepsilon\leq 1\), let 
\(M\coloneqq \lfloor\frac{1}{3-\ln\alpha}\ln(1/\varepsilon)\rfloor\) and
$\bar\delta \coloneqq \frac{1-\alpha}{4}(\frac{\alpha}{8})^M$, where
$\alpha \coloneqq 1-8\varepsilon/(5KM)$, and $K$ is a large constant.
Note that for \(K\) large enough \(\alpha>1/e\), independently 
of the values \(\varepsilon\in(0,1]\) and \(M\geq 1\); this way,
we guarantee that 
\(M\geq\lfloor\frac14\ln(1/\varepsilon)\rfloor\). Once we 
have defined our function family we justify our choice for these
parameters.

Let us recall from Section~\ref{sec:oracle-compl-box}  
the recursive definition of intervals \((I_s)_{s\in\{0,1\}^M}\) and properties 
\ref{item:intLength}--\ref{item:bounds}.
We will prove there exists a family 
$\tilde{\mathcal{F}} = \{f_{s,\delta} \colon [-1,1]\to \RR:
\,\,s\in\{0,1\}^l,\, 0<\delta_i\leq \bar\delta, \, i=1,\ldots,M \}$,
satisfying properties \ref{item:intLength}, \ref{item:disjoint-if-not-prefix},
and the analogues of \ref{item:family-increasing}--\ref{item:bounds}
described below.
\begin{enumerate}[label=(G-\arabic*), start=3] 
\item\label{item:perturbed-family-increasing}
  \(f_{s,\delta} \geq f_{\prefix{l}{s},\delta}\)
  with \(f_{s,\delta}(x) = f_{\prefix{l}{s},\delta}(x)\) if and only if \(x \in [-1,1]\setminus 
  \interior{I_{\prefix{l}{s}}^{\delta}}\), where
  \[I_{\prefix{l}{s}}^{\delta}
  \coloneqq I_{\prefix{l}{s}}\left[-1+\left(\frac{2}{\alpha}\right)^l\frac{\delta_{l+1}}{1-\alpha},
  1-\left(\frac{2}{\alpha}\right)^l\frac{\delta_{l+1}}{1-\alpha/2}\right].\]
\item\label{item:perturbed-family-values}
  The function \(f_{s,\delta}\) restricted to the interval \(I_{s}\)
  is of the form
  \begin{equation*}
    f_{s,\delta}(x) = b_{\abs{s},\delta} - \left(\frac{\alpha}{8}\right)^{ \abs{s}}
    + \left(\frac{\alpha}{2}\right)^{ \abs{s}} \abs*{x - I_{s}(0)}
    \qquad x \in I_{s},
  \end{equation*}
 where \(b_{\abs{s},\delta} = f_{s,\delta}(I_{s}(-1)) = 
 f_{s,\delta}(I_{s}(+1))\) is the function value  on the 
 endpoints of \(I_{s}\) (defined inductively on \(|s|\) and
 \(\delta_i\), \(i\leq|s|\)).
  \item\label{item:perturbed-family-bounds}
  For \(t \sqsubseteq s\), we have \(f_{s,\delta}(x) < b_{\abs{t},\delta}\)
  if and only if \(x \in \interior{I_{t}}\).
\end{enumerate}

\begin{figure}[h]
  \centering
  \small
  \begin{tikzpicture}[x=2em,y=3em]
    \draw[name path=f-s] (0,4) coordinate(top left)
    [namex ={\(-1\)}]
    -- ++(4,-4) 
    -- ++(4,+4) coordinate(top right) 
    [namex = {\(+1\)}]
    ;
    \draw[gray, thick] (top left)
    -- ++(2,-2)
    -- ++(1,-1/2)
    -- (top right)
    ;
    \draw[dotted, thick] (top left)
    -- ++(2,-1) coordinate(break)
    [namex = {\(- \frac{1}{2}\)}]
    -- ++(1,-1/4) coordinate(min)
    [namex = {\(- \frac{1}{4}\)}]
    -- coordinate[pos=.3, below=1mm](arrowstart) (top right)
    ;
   \coordinate [below=.5 of top right](top right shifted);
   \coordinate [below=.5 of top left](top left shifted);
   \draw[dotted, thick, name path=g-s]
    (top left shifted)
    -- ++(2,-1) coordinate(perturbed break)
    -- ++(1,-1/4) coordinate(perturbed min)
    -- coordinate[pos=.3, above=1mm](arrowend) (top right shifted)
    ;
    \draw[-latex, thick] (arrowstart) -- (arrowend);
    \draw[<->]
    ([xshift=-2mm] top left) -- node[left]{\(\delta_{\size{s}+1}\)}
    ([xshift=-2mm] top left shifted);
    \draw[name intersections={of=f-s and g-s}, thick]
	(top left) --
	(intersection-1) -- (perturbed break) -- (perturbed min)
        -- (intersection-2) -- (top right);
    \draw[thin] (0,0) node[left]{\(\Delta_{s}\)} -- (8,0);
    \path (4,0) [namex = {\(0\)}];
    \draw[dashed] (top left) -- (top right)
    node[right]{\(b_{\abs{s}, \delta}\)};
    \draw[dashed] (perturbed break) -- (top right |- perturbed break)
    node[right]{\(b_{\abs{s} + 1, \delta}\)};
  \end{tikzpicture}
  \caption{Comparison between instance from Section
  \ref{subsec:one_dim_case} (grey line)
  and perturbed one (thick line).}
  \label{fig:perturbed-box}
\end{figure}
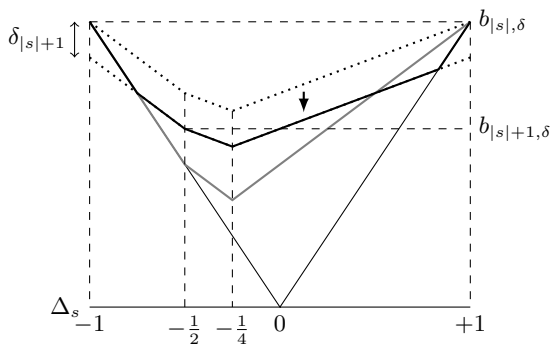

We construct our instance inductively, the case $|s|=0$ being trivial 
($f_{\perp}(x)=|x|$; note this function does not depend on the 
perturbations \(\delta\)). Moreover, let \(b_{0,\delta}=1\), and inductively
\(b_{l+1,\delta}\coloneqq b_{l,\delta}-\frac{\alpha}{2}\left(\frac{\alpha}{8}\right)^l-\delta_{l+1}\).
Suppose now $|s|=l $ and $\delta\in[0,\bar\delta]^M$, and
for simplicity let $s_{l+1}=0$ (the case $s_{l+1}=1$ is analogous). 
By inductive hypothesis 
$f_{s,\delta}(I_{s}(-1)) = f_{s,\delta}(I_{s}(+1))=b_{|s|,\delta}$. 
We consider the perturbed extension given by 
\begin{equation*}
g_{s0,\delta}(x) \coloneqq
b_{l+1,\delta} - \left(\frac{\alpha}{8}\right)^{l + 1} + 
    \left(\frac{\alpha}{2}\right)^{l+1}
    \abs*{x - I_{s} \left(- \frac{1}{4} \right)}
\end{equation*}
if \(x \in I_{s} \left[- \frac{1}{2}, 1\right]\), and
\begin{equation*}
g_{s0,\delta}(x) \coloneqq
 b_{l,\delta}+\alpha\left[f_{s,\delta}(x)-b_{l,\delta}\right] -\delta_{l+1}
\end{equation*}
otherwise.

We define the new perturbed instance as follows
\[ f_{s0,\delta}(x) = \max\{ g_{s0,\delta}(x), f_{s,\delta}(x)\} \qquad x \in [-1,1]. \]
Note for example that at \(x=I_{s}(-1/2)\) the function \(g_{s0,\delta}\) is 
continuous, and moreover \(g_{s0,\delta}(x)=b_{l+1,\delta}>
b_{l,\delta}-\frac12\left(\frac{\alpha}{8}\right)^l=f_{s,\delta}(x)\), 
where the strict inequality holds by definition of \(\bar\delta\);
similarly, for \(x=I_s(0)\), \(g_{s0,\delta}(x)=b_{l+1,\delta}>f_{s,\delta}(x)\). 
This way, we guarantee that at the interval \(I_{s0}\) the maximum
defining \(f_{s0,\delta}\) is only achieved by \(g_{s0,\delta}\).

The key property of the perturbed instances is the following: 
Since $\delta_{l+1}>0$ then $f_{s0,\delta}$ is smooth at
$I_{s}(-1)$ and $I_s(+1)$, and its local behavior does not depend on
$\delta_{l+1},\ldots,\delta_M$. Furthermore, for all \(x \in [-1,1]\setminus 
\interior{I_{s}^{\delta}} \), 
we have $f_{s0,\delta}(x) =  f_{s,\delta}(x)$, from which is easy to prove 
\ref{item:perturbed-family-increasing}.

Finally, observe that properties \ref{item:intLength}, \ref{item:disjoint-if-not-prefix}, 
\ref{item:perturbed-family-values} and \ref{item:perturbed-family-bounds}
are straightforward to verify. This proves the existence of our family. Moreover, by 
construction, the function defined above is convex, continuous, and has Lipschitz 
constant bounded by \(1\).

To finish our discussion, let us explain the role of these perturbations, and the 
choice of parameters. First observe that the definition of \(g_{s,\delta}\) is
obtained by applying two operations to the extension used in 
Section~\ref{sec:oracle-compl-box}: first we reduce the slope of the extension
by a factor \(\alpha\), and then we `push-down' the function values by an 
additive perturbation \(\delta_{\abs{s}+1}\) (see Figure~\ref{fig:perturbed-box}). 
The motivation for the perturbed 
family is to provide instances with similar structure than in 
Section~\ref{sec:oracle-compl-box}; in particular, we preserve the nesting
property of level sets. The main difference with the perturbed instance is the 
smoothness at \(I_{s}(-1)\), \(I_s(+1)\): by doing this we hide the behavior
(in particular the perturbations) of deeper level sets from its behavior outside
the interior of this level set, for any local oracle. In the multidimensional 
construction the perturbations will have a similar role than in the large-scale 
case, making the maximizer term unique with probability 1 for any oracle query, as 
perturbations in different coordinates will be conditionally independent. This 
process will continue throughout iterations, and the independence of 
perturbations for deeper level sets is crucial for this to happen.

\subsubsection*{Multidimensional construction of the family}
As in the unperturbed case, the obvious multidimensional extension is to
consider the maximum among all coordinates of the one dimensional instance,
namely, for a concatenation of \((nM)\)-dimensional strings \(\{s_i:i\in[n]\}\), \(s\), 
and concatenation of \((nM)\)-dimensional vectors \(\{\delta_i:i\in[n]\}\), \(\delta\), 
let
\begin{equation} 
\label{max-multidim}
f_{s,\delta}(x) \coloneqq \max_{i\in[n]} f_{s_i,\delta_i}(x).
\end{equation}

\paragraph*{Lower bound for fixed perturbation under oracle $\tilde{\mathcal{O}}$}
Note that from \ref{item:intLength} and \ref{item:perturbed-family-bounds} 
the packing property is satisfied when 
\(M=\lfloor\frac{1}{3-\ln\alpha}\ln(1/\varepsilon)\rfloor\). Next, 
emulation by the String Guessing Problem comes from analogous 
results to Lemmas~\ref{lem:answer-nonsmooth-1-dim} and 
\ref{lem:learn-nonsmooth-adversarial}, considering the obvious 
modifications due to the perturbations, and whose proofs are 
thus omitted. This
establishes the lower bound \(\Omega(n\log(1/\varepsilon))\).

\paragraph*{Lower Bounds for \(\mathcal{F}\) under oracle $\mathcal{O}$}
Similarly as in the large-scale case, the fundamental task is to 
prove that w.p. 1 at every iteration the information provided by  
\(\mathcal{O}\) can be emulated by the single-coordinate oracle
\(\tilde{\mathcal{O}}\) studied earlier.

For this, we will analyze the oracle answer, showing that for any nontrivial query
the maximizer in \eqref{max-multidim} is unique w.p. 1. The role of perturbations
is crucial for this analysis. With this in hand, the lower bound comes from an 
averaging argument analogous the large-scale case.

\begin{lemma}[of unpredictability, box case]
\label{lem:unpredictability-box-case}
Let $\prec$ be the lexicographic order defined in \eqref{lexic-order}. 
For an \(\mathcal{O}\)-based algorithm solving family \(\mathcal{F}\) with queries
\(X^1,\ldots,X^T\) let the set of maximizer coordinates be
\[ J^t\coloneqq \{(i,l):\,\, f_{S,\Delta}(X^t)=f_{S_i,\Delta_i}(X^t) , \,\,
b_{l+1,\delta}<f_{S,\Delta}(X^t) \leq b_{l,\delta}\} \]
for \(t\leq T\), and \(J^t=\emptyset\) otherwise. For a query \(t\leq T\) let the
\(i\)-th depth \(l_i\) be such that \((i,l_i)\) is \(\prec\)-maximal among
elements of \(J^{t-1}\) with first coordinate \(i\). Finally, let 
\(J_c^{t} \coloneqq \{(i,l): (i,l)\succ (i,l_i)\}\).

Then the distribution of \((\Delta_{i,h})_{J_c^{t}}\) conditionally on 
\((\Pi_{< t},Q_t)\) is absolutely continuous. Moreover, after the 
oracle answer \(A_{t}\), w.p. 1 either we only obtain (inexact) lower 
bounds on some of the \(\Delta_{i,h}\), or \(J^t\) is a singleton.
\begin{proof}
For \(t<T\), let the active set be defined as
\[
\mathcal{I}^t\coloneqq \interior{\prod_{i=1}^n I_{\prefix{l_i+1}{s_i}}^{\Delta_i}}.
\]

We prove the Lemma by induction on \(t\). The case \(t=1\) clearly 
satisfies that \((I_{i,l})_{(i,l)\in[n]\times [M]}\) is absolutely 
continuous. Next, after the first oracle call, there are two cases: 
first, if the query lies outside the active set \(\mathcal{I}^1\),
then after the oracle answer all what is learnt are lower 
bounds on the perturbations (this since the instance behaves 
as an absolute value function of the maximizer coordinates); 
by absolute continuity these lower bounds are inexact 
w.p. 1. If the query lies in \(\mathcal{I}^1\) then since the perturbations
are absolutely continuous, and since (for all possible realizations 
of \(S\)) the set where the maximizer is not unique is a smaller 
dimensional manifold, the maximizer in \(f_{S,\Delta}\) is unique 
w.p. 1. In this case all bits preceding this maximizer in the 
\(\prec\)-order are learnt, and potentially some perturbations for 
these bits as well.

Next, let \(t\geq 1\), and suppose the Lemma holds up to query \(t\). 
Then we know that \((\Delta_{i,h})_{J_c^{t}}\) is absolutely 
continuous, conditionally on \((\Pi_{< t},Q_{t})\), and that the 
oracle answer \(A_{t}\) is such that w.p. 1 either we only obtain 
(inexact) lower bounds on some \(\Delta_{i,h}\), or \(J^{t}\) 
is a singleton. In the first case, \((\Delta_{i,h})_{J_c^{t}}\) 
remains absolutely continuous (since lower bounds are inexact), 
so clearly the statement holds true for \(t+1\). In the case \(J^{t}\) 
is a singleton, note that \((\Delta_{i,h})_{(i,h)\in J_c^{t+1}}\) remains 
independent and uniform by construction of the function family. 
This way, by performing the same analysis as in the base case 
over the set \(\prod_{i=1}^n I_{\prefix{l_i+1}{s_i}}\) 
we conclude that the Lemma holds for \(t+1\).
\end{proof}
\end{lemma}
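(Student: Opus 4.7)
The plan is to mirror the proof of Lemma~\ref{lem:unpredictability-ball-case}, proceeding by induction on \(t\), but with additional care because in the box case each coordinate \(i\) carries a nested hierarchy of perturbations \(\Delta_{i,1},\dots,\Delta_{i,M}\) rather than a single scalar perturbation. Throughout, I work with the maximal oracle \(\mathcal{O}\), justified by Lemma~\ref{lem:maximalOracle}. The key structural property to exploit is \ref{item:perturbed-family-increasing}: the instance \(f_{s,\delta}\) agrees with \(f_{\prefix{l}{s},\delta}\) outside \(\interior{I_{\prefix{l}{s}}^{\delta}}\), and is there independent of \(\delta_{l+1},\dots,\delta_M\). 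This shields deeper-level perturbations from any local oracle query falling outside the corresponding active interval.

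The base case \(t=1\) is immediate: the transcript is empty and \(Q_1\) depends only on the algorithm's private randomness, so all \((\Delta_{i,h})\) remain independent and uniform on \([0,\bar\delta]\), and in particular the joint distribution on \(J_c^1\) is absolutely continuous. For the inductive step, assume the conclusion for \(t\); I would analyze the oracle answer \(A_t\) at the query \(X^t\) by splitting on whether \(X^t\) lies in the active region \(\mathcal{I}^t\coloneqq \prod_{i\in [n]} \interior{I_{\prefix{l_i+1}{S_i}}^{\Delta_i}}\) or not.

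If \(X^t\notin \mathcal{I}^t\), then by \ref{item:perturbed-family-increasing} the germ of \(f_{S,\Delta}\) at \(X^t\) is determined by already-resolved quantities together with constraints of the form \(\Delta_{i,h}\geq c_{i,h}\) for some \((i,h)\in J_c^t\); these arise because ``\(X^t\) outside the inner interval \(I_s^\delta\)'' translates, through the formula for \(I_s^\delta\), into a lower bound on the corresponding perturbation (the shrinking endpoints are affine in \(\delta_{l+1}\)). The inductive absolute continuity makes each such bound strict almost surely, matching the first alternative of the lemma; the conditional distribution of \((\Delta_{i,h})_{J_c^{t+1}}\) remains absolutely continuous since restricting an absolutely continuous measure to an open half-space preserves absolute continuity. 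If instead \(X^t\in \mathcal{I}^t\), then at least one pair \((i,l_i+1)\in J_c^t\) is a candidate maximizer in \eqref{max-multidim}. Because \((\Delta_{i,h})_{J_c^t}\) is absolutely continuous and the set where two distinct \((i,l)\) pairs achieve equal maxima is a finite union of lower-dimensional manifolds in perturbation space, the maximizer is unique almost surely, i.e.\ \(\abs{J^t}=1\); the maximal oracle then pins down exactly one pair \((S_{i^\ast,l_{i^\ast}+1},\Delta_{i^\ast,l_{i^\ast}+1})\) while leaving all other perturbations in \(J_c^{t+1}\) unconstrained, so absolute continuity propagates.

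The main obstacle is the first sub-case: arguing rigorously that the information a general local oracle can extract from the germ at \(X^t\) reduces to \emph{independent one-sided} inequalities on individual \(\Delta_{i,h}\), rather than some joint algebraic relation that could pin a coordinate to a measure-zero set. This requires a careful look at how the pieces of \(f_{s,\delta}\) are patched together in Section~\ref{sec:perturbed-box}, in particular the additive appearance of \(-\delta_{l+1}\) in the outer branch of \(g_{s0,\delta}\), which ensures that perturbations enter affinely and so every constraint is a half-space on a single coordinate. Once this is verified, both sub-cases feed back into the induction cleanly and the lemma follows.
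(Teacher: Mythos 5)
Your proposal follows essentially the same route as the paper's proof: induction on \(t\), the same case split on whether the query lies in the active set \(\mathcal{I}^t\), the lower-dimensional-manifold argument for almost-sure uniqueness of the maximizer, and preservation of absolute continuity when only inexact lower bounds on the \(\Delta_{i,h}\) are revealed. The ``main obstacle'' you flag (that the germ at an inactive query point yields only independent one-sided affine constraints on individual perturbations, via \ref{item:perturbed-family-increasing} and the additive \(-\delta_{l+1}\) in the construction) is precisely the point the paper also treats informally, so your account is, if anything, slightly more explicit than the original.
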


Let us define the set
\[ E\coloneqq \bigcap_{t\leq T}
   \{(\Delta_{i,h})_{J_c^{t}} \mbox{ is absolutely continuous } 
  \vee |J^t| \leq 1 \}.\]
By the previous Lemma, \(\probability{E}=1\). It is clear 
that on event \(E\), oracle \(\mathcal{O}\) can be emulated by
\(\tilde{\mathcal{O}}\) by following an analogous approach as in
Section~\ref{sec:ellpPerturb}.
It is left as exercise to derive from this the lower complexity 
bound \(\Omega(n\log(1/\varepsilon))\), and its variants for 
expectation, high probability, and bounded error algorithms. 

\subsubsection{The low-scale case: reduction to the box 
when \(1\leq p<2\)}

Finally, as a consequence of our strong lower bounds for arbitrary 
oracles on the box we derive optimal lower complexity bounds for 
low-scale optimization over \(L^p\) balls for \(1\leq p<2\)

\begin{proposition}
  \label{reduction_ball_to_box:p_leq_2}
  Let $1\leq p<2$,
  and \(\varepsilon \leq n^{-\frac{1}{2} - \delta}\) with \(\delta > 0\).
  There exists a family \(\mathcal{F}\)
  of convex Lipschitz-continuous functions
  in the \(L^{p}\) norm
  with Lipschitz constant \(1\)
  on the \(n\)-dimensional unit Euclidean ball \(B_{p}(0,1)\)
  such that for any local oracle for family \(\mathcal{F}\),
  both the distributional
  and high-probability oracle complexity
  of finding an \(\varepsilon\)-minimum
  under the uniform distribution
  is
  \(\Omega\left( n \log \frac{1}{\varepsilon} \right)\).

 For algorithms with error probability at most \(P_e\),
 the distributional complexity is
 \(  \Omega \left( (1-P_e)n \log \frac{1}{\varepsilon} \right) \)
 and the high probability complexity is
 \(\Omega\left( n \log \frac{1}{\varepsilon} \right)\).
\begin{proof}

  This proof is based on convex geometry and it is
  inspired by \cite{Guzman:2013}.
  
  Let \(\varepsilon \leq 1/ n^{1/2+\delta}\) and
  \(X\coloneqq B_p(0,1)\). By Dvoretzky's Theorem on
  the \(L^p\)-ball \cite[Theorem 4.15]{Pisier:1989}, there exists
  a universal constant \(\alpha\in(0,1)\) (i.e., independent of \(p\)
  and \(n\)), such that for \(k= \lfloor \alpha n\rfloor\) there 
  exists  a subspace \(L\subseteq \RR^n\) of dimension \(k\), 
  and a centered ellipsoid \(E\subseteq L\) such that
  \begin{equation} \label{Dvoretzky}
  \frac{1}{2}E\subseteq X\cap L \subseteq E.
  \end{equation}
  Let \(\{\gamma_i(\cdot):\, i=1,\ldots, k\}\) be linear forms
  on \(L\) such that \(E=\{y\in L:\, \sum_{i=1}^k \gamma_i^2(y) \leq 1\}\).
  By the second inclusion above,
  for every \(i\in[k]\)
  the maximum of \(\gamma_i\) over \(X\cap L\) does not exceed 1,
  whence, by the Hahn-Banach Theorem, the linear form 
  \(\gamma_i(\cdot)\) can be extended from \(L\) to \(\RR^n\) with 
  its maximum over \(X\) not exceeding 1. In  other words, there
  exist \(k\) vectors \(g_i\in\RR^n\), \(1\leq i\leq k\), such that 
  \(\gamma_i(y)=\sprod{g_i}{y}\) for every \(y \in L\) and
  \(\norm[\frac{p}{p-1}]{g_i} \leq 1\), for all \(1 \leq i \leq k\).
Now consider
  the linear mapping 
  \[ x \mapsto Gx \coloneqq
  (\sprod{g_1}{x}, \dotsc, \sprod{g_k}{x}) \colon
  \RR^n\to \RR^k. \]
The operator norm of this mapping induced by the norms
\(\norm[p]{\cdot}\) on the domain and
\(\norm[\infty]{\cdot}\) on the
codomain does not exceed \(1\).
Therefore, for any Lipschitz-continuous function
\(f \colon \RR^k \to \RR\)
with Lipschitz constant \(1\)
in the \(\norm[\infty]{\cdot}\) norm, the function
\(\tilde{f} \colon \RR^n\to \RR\) defined by
\(\tilde{f}(x) = f(G x)\) is
Lipschitz-continuous with constant \(1\) in the
\(L^p\) norm.
We claim (postponing its proof) that the complexity of Lipschitz-continuous functions
in the \(L^p\) norm on \(X \subseteq \RR^n\) is
  lower bounded by the complexity of Lipschitz-continuous functions
  in the \(L^{\infty}\) norm on \( B_{\infty}\left(0,\frac{1}{2\sqrt k}\right) \subseteq \RR^k\)
  (as
  \(G\left(B_{\infty}\left(0,\frac{1}{2\sqrt k}\right)\right) \subseteq \frac12 E\subseteq X\)).
  We conclude that the distributional and high probability oracle complexity
  of the former family is lower bounded by
  \begin{equation*}
  \Omega\left(k \log \dfrac{1}{2\sqrt k \varepsilon} \right) 
    = \Omega\left(n \log \dfrac{1}{\varepsilon \sqrt{n}} \right)
    = \Omega\left(n\log \frac{1}{\varepsilon} \right),
  \end{equation*}
  for large \(n\), since for \(\varepsilon\leq n^{-1/2-\delta}\) with \(\delta>0\)
  we have \( \varepsilon \sqrt n \leq \varepsilon^{\frac{\delta}{1/2+\delta}}\).

  We finish the proof by proving the claim: let \(\mathcal{G}\) be the 
  subfamily of Lipschitz-continuous functions with constant \(1\) for the
  \(L^{\infty}\) norm given by (\ref{max-multidim}), defined 
  on the box \(B_{\infty}(0,1/(2\sqrt k))\) of \(\RR^k\), and let 
  \(\mathcal{F}\) be the respective family of `lifted' instances 
  \(\tilde f \colon \RR^n \to \RR\),
  which are Lipschitz-continuous functions with constant \(1\) for the
  \(L^p\) norm, defined on the unit ball \(B_p(0,1)\) of \(\RR^n\).

  Observe that the maximal oracle \(\mathcal{O}\) on 
  \(\mathcal{G}\) induces the maximal oracle for family 
  \(\mathcal{F}\). Namely, if we let \(\tilde{\mathcal{O}}\) be the oracle
  for family \(\mathcal{F}\) defined by
  \(\tilde{\mathcal{O}}_{\tilde f}(x)=\tilde{\mathcal{O}}_{\tilde g}(x)\) 
  if and only if  \(\mathcal{O}_{f}(Gx)=\mathcal{O}_{g}(Gx)\), then
  it is easy to see that \(\tilde{\mathcal{O}}\) is the maximal oracle
  for \(\mathcal{F}\). This way, any oracle for \(\mathcal{F}\) can be 
  emulated by an oracle for \(\mathcal{G}\), and thus by
  Lemma~\ref{lem:emulator-complexity} lower 
  bounds for \(\mathcal{G}\) also hold for \(\mathcal{F}\).

\end{proof}
\end{proposition}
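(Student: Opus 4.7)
The plan is to reduce the $L^p$-ball problem for $1 \leq p < 2$ to the box problem of Theorem~\ref{th:perturbed-box-lower-bound}, following the spirit of Proposition~\ref{reduction_ball_to_box} but with a sharper geometric embedding. The naive inclusion $B_\infty(0, n^{-1/p}) \subseteq B_p(0,1)$ rescales the accuracy by $n^{1/p}$, which is too weak here: it would need $\varepsilon \leq n^{-1/p - \delta}$, whereas we are only given $\varepsilon \leq n^{-1/2 - \delta}$. To recover the sharper $\sqrt{n}$-rescaling, the natural tool is Dvoretzky's theorem: for $p < 2$ the $L^p$-ball admits almost-Euclidean sections of dimension $\Theta(n)$, and the $L^\infty$-box of side $\Theta(1/\sqrt{n})$ inscribed in such a section fits inside $B_p(0,1)$.

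Concretely, I would apply Dvoretzky's theorem to $B_p(0,1)$ to produce a subspace $L \subseteq \RR^n$ of dimension $k = \lfloor \alpha n \rfloor$ (with $\alpha \in (0,1)$ a universal constant independent of $n$ and $p$) and a centered ellipsoid $E \subseteq L$ satisfying $\tfrac{1}{2} E \subseteq B_p(0,1) \cap L \subseteq E$. Writing $E = \{y \in L : \sum_{i=1}^k \gamma_i(y)^2 \leq 1\}$ for suitable linear forms $\gamma_i$ on $L$, the inclusion $B_p(0,1) \cap L \subseteq E$ forces $\sup_{y \in B_p(0,1) \cap L} \gamma_i(y) \leq 1$, so Hahn--Banach extends each $\gamma_i$ to a functional $\sprod{g_i}{\cdot}$ on $\RR^n$ with $\norm[q]{g_i} \leq 1$ (for $q$ conjugate to $p$). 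The bundled operator $G \colon \RR^n \to \RR^k$ given by $G x \coloneqq (\sprod{g_1}{x}, \dotsc, \sprod{g_k}{x})$ then has operator norm at most $1$ from $(\RR^n, \norm[p]{\cdot})$ to $(\RR^k, \norm[\infty]{\cdot})$, and an easy check shows $G(\tfrac{1}{2} E) \supseteq B_\infty(0, 1/(2\sqrt{k}))$, so the box of radius $1/(2\sqrt{k})$ in $\RR^k$ pulls back inside $B_p(0,1)$ through $G$.

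To finish, I would take the hard family $\mathcal{G}$ of $1$-Lipschitz convex instances on $B_\infty(0, 1/(2\sqrt{k})) \subseteq \RR^k$ supplied by Theorem~\ref{th:perturbed-box-lower-bound}, and lift each $f \in \mathcal{G}$ to $\tilde{f}(x) \coloneqq f(G x)$ on $B_p(0,1)$; the operator norm bound makes $\tilde{f}$ automatically $1$-Lipschitz in $L^p$. I would then argue that the maximal local oracle for the lifted family $\mathcal{F} \coloneqq \{\tilde{f} : f \in \mathcal{G}\}$ is emulated by the maximal local oracle for $\mathcal{G}$: translate a query $x$ to $G x$, and use that two lifted instances coincide in a neighborhood of $x$ iff the base instances coincide in a neighborhood of $Gx$. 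Lemma~\ref{lem:emulator-complexity} then transfers the box lower bound, which is of order $k \log \tfrac{1}{(1/(2\sqrt{k}))\varepsilon} = \Theta\bigl(n \log \tfrac{1}{\sqrt{n}\,\varepsilon}\bigr)$; under the hypothesis $\varepsilon \leq n^{-1/2 - \delta}$ this collapses to $\Omega(n \log 1/\varepsilon)$, since $\sqrt{n}\,\varepsilon \leq \varepsilon^{\delta/(1/2 + \delta)}$.

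The main technical obstacle is the emulation step: one has to verify that $G$ can be chosen surjective onto $\RR^k$, so that local equivalence pushes across (this should hold automatically because the $\gamma_i$ form a basis of the dual of $L$, and Hahn--Banach preserves their linear independence), and one must check that the ``local'' information of $\tilde{f}$ near $x$ is indeed recoverable from the ``local'' information of $f$ near $Gx$, which is immediate since $\tilde{f}$ factors through $G$. All three complexity notions — distributional, high-probability, and bounded-error — are transferred simultaneously by Lemma~\ref{lem:emulator-complexity}, so the variants for these settings require no additional argument.
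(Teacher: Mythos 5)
Your proposal is correct and follows essentially the same route as the paper: Dvoretzky's theorem to extract a $\Theta(n)$-dimensional almost-Euclidean section, Hahn--Banach to build the norm-one map $G$, lifting the hard box family via $\tilde f = f \circ G$, and transferring the bound through the maximal-oracle emulation of Lemma~\ref{lem:emulator-complexity}. The surjectivity point you flag is handled exactly as you suggest (the $\gamma_i$ form a coordinate system on $L$), so no gap remains.
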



\bibliographystyle{IEEEtran}
\bibliography{IEEEabrv,Bibliography}

\begin{IEEEbiography}[{\includegraphics[width=1in,height=1.25in,
    clip,keepaspectratio]{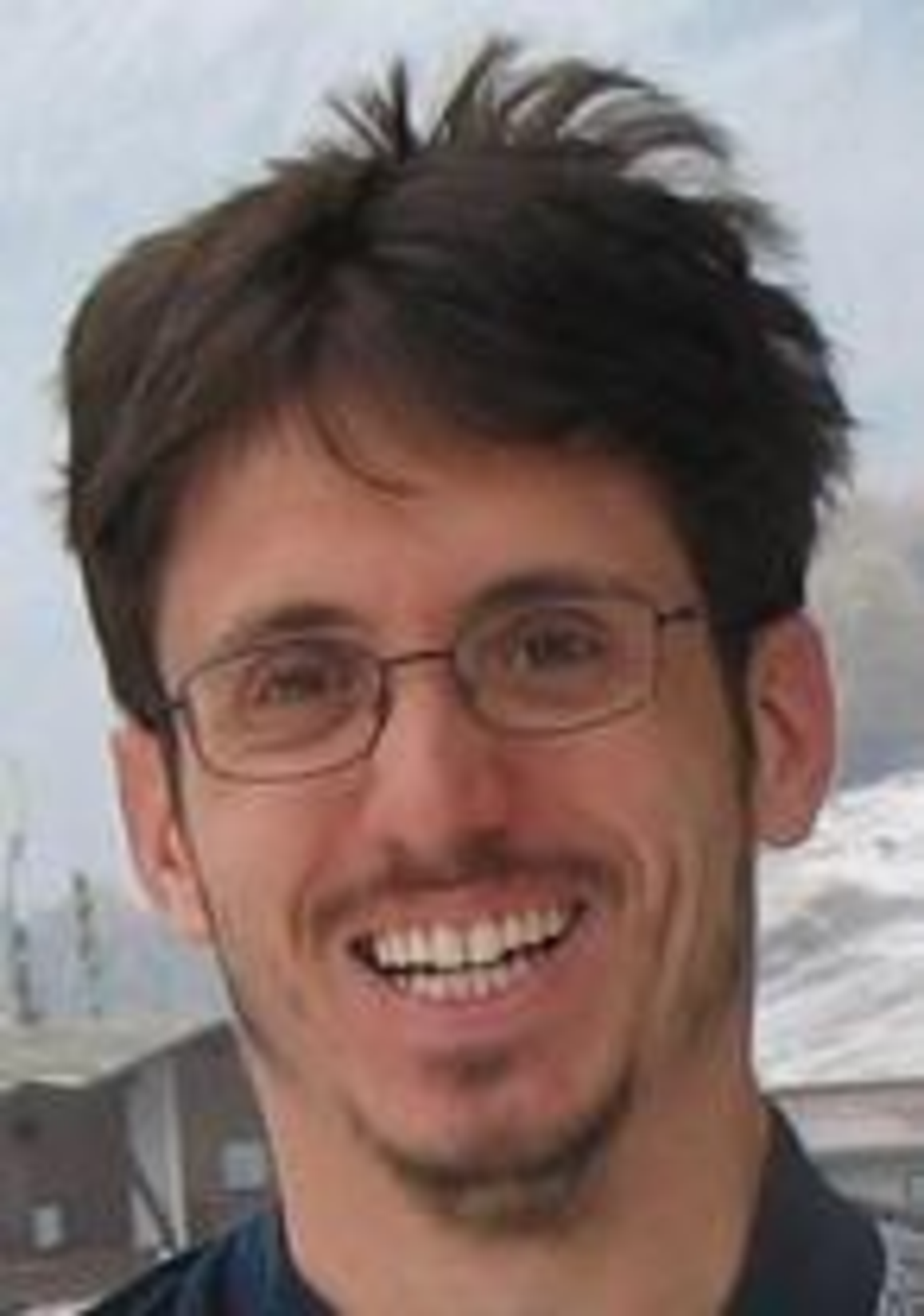}}]{Gábor Braun}
  is currently a Postdoctoral Fellow at the Georgia Institute of
  Technology working on complexity of combinatorial
  problems under various programming paradigms like LPs and SDPs,
  and teaching engineering optimization.
  He has been a Research Fellow at University Duisburg-Essen, Germany
  working on relationship between logic
  and module theory from algebra,
  and also did research at E\"otv\"os Loránd University, Hungary
  and Alfr\'ed R\'enyi Institute, Hungary
  in group theory and isolated surface singularities.
  He was a teaching assistant at University Leipzig, Germany
  teaching courses in algebraic automata theory and calculus.
  His research is focused on combining algebra with other fields,
  including topology, logic, information theory and complexity theory.
  He has received a Master's degree in Mathematics at E\"otv\"os Lor\'and
  University, Budapest, Hungary, and a Ph.D. in Mathematics at the
  University Duisburg-Essen in Germany.
\end{IEEEbiography}

\begin{IEEEbiography}[{\includegraphics[width=1in,height=1.25in,clip,
    keepaspectratio]{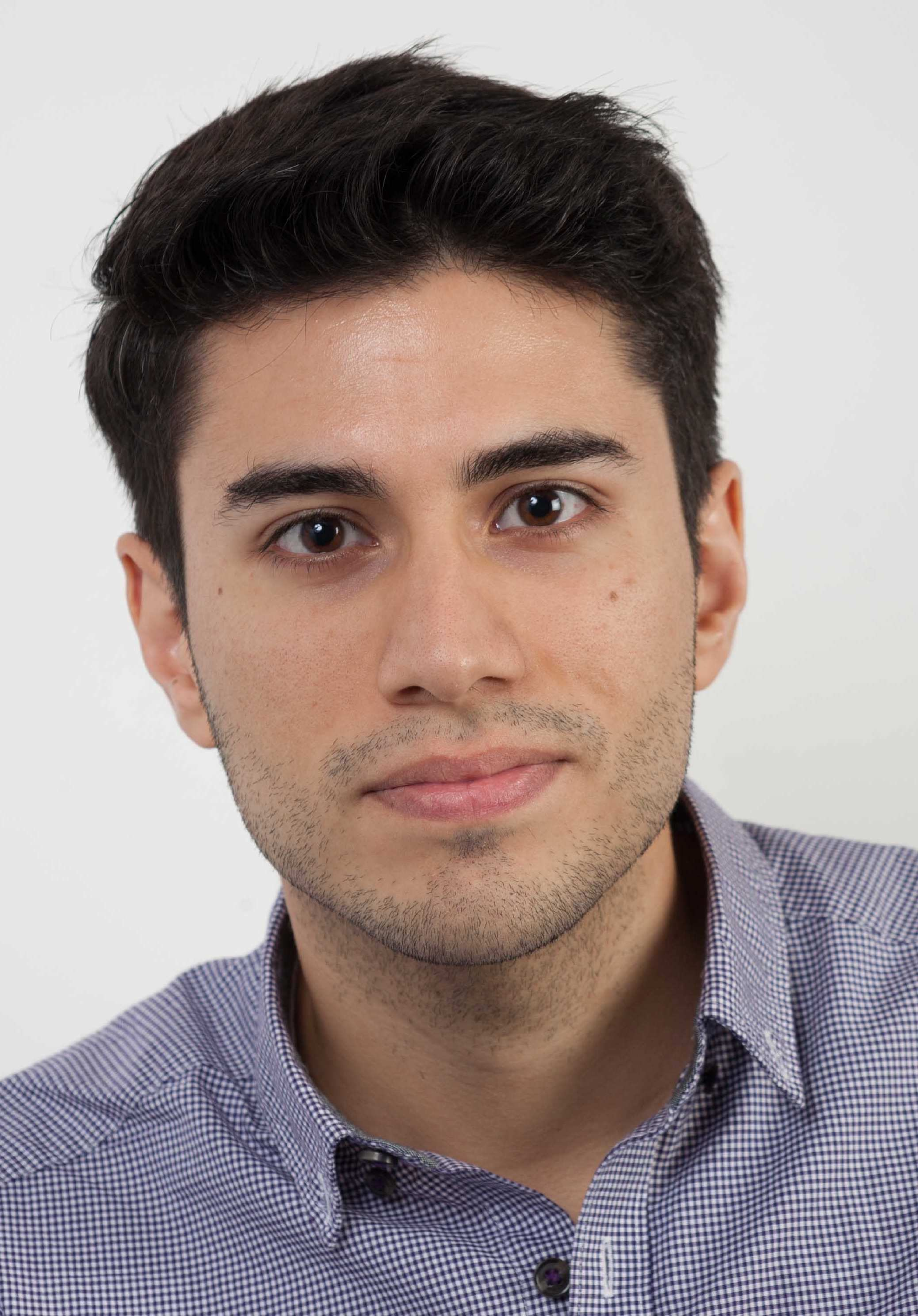}}]{Crist\'obal Guzm\'an} 
    is an Assistant Professor in Mathematical and Computational 
Engineering at Pontificia Universidad Cat\'olica de Chile. He worked
as a Postdoctoral Fellow in the Networks \& Optimization group at Centrum Wiskunde \& Informatica, Amsterdam. After joining Pontificia Universidad Cat\'olica de Chile as Assistant Professor in 2016, he was a Visiting Professor at Centrum Wiskunde \& Informatica.
His research interests include large-scale convex optimization, network 
equilibrium, and machine learning. 
Crist\'obal Guzm\'an received a Mathematical Engineering degree 
from Universidad de Chile, Santiago, Chile in 2010, and a Ph.D. in 
Algorithms, Combinatorics \& Optimization  from Georgia Institute of Technology, Georgia, USA in 2015. 
He received a FONDECYT Iniciaci\'on grant in 2017.
\end{IEEEbiography}

\begin{IEEEbiography}[{\includegraphics[width=1in,height=1.25in,clip,
    keepaspectratio]{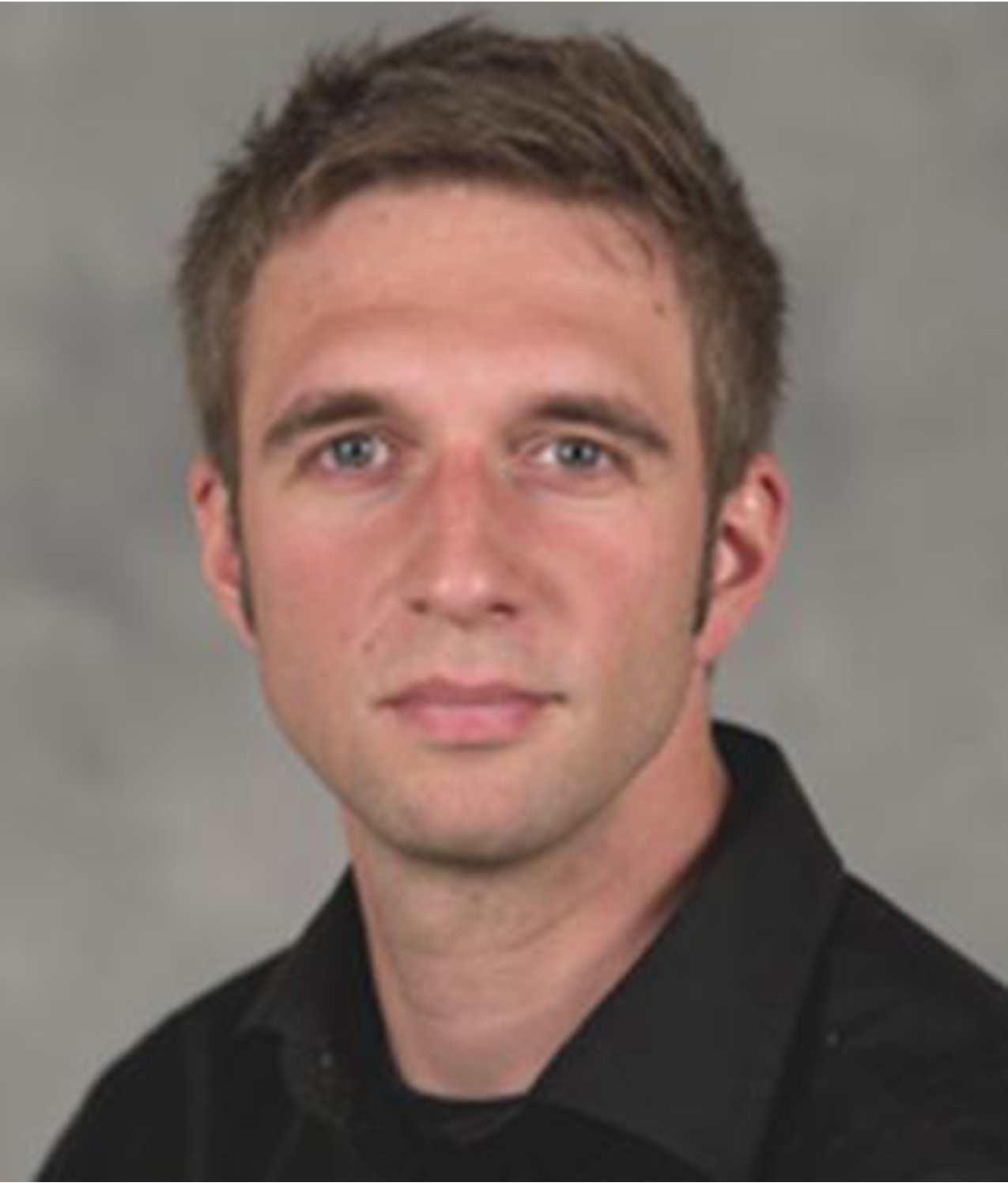}}]{Sebastian Pokutta}
  is the David M. McKenney Family Associate Professor
  in the School of Industrial and Systems Engineering at Georgia
  Tech. He worked as a Postdoctoral Fellow at the MIT Operations Research
  Center where the topic of his research was combinatorial
  optimization and cutting-plane procedures. Upon completion of his
  postdoctoral fellowship at MIT Sebastian Pokutta was appointed as an
  optimization specialist at ILOG where he worked on production
  planning and supply chain optimization within the steel industry,
  automotive industry, and energy industry. In early 2008, he joined
  KDB Krall Demmel Baumgarten a state-of-the art risk management
  practice and developed risk management methodologies at top tier
  banks.  Prior to joining the Friedrich-Alexander-University of
  Erlangen-N\"urnberg as an Assistant Professor in 2011 he was a
  visiting lecturer at MIT.
  Sebastian Pokutta's research concentrates on combinatorial
  optimization, machine learning, and information theory, and
  in particular focuses on cutting-plane methods, extended
  formulations, and the combination of discrete optimization and machine learning. His applied work focuses on deploying these methods within the broader field of engineering and finance. 
  He received both his master's degree and Ph.D. in mathematics from
  the University of Duisburg-Essen in Germany and he received the NSF CAREER Award in 2015.
\end{IEEEbiography}

\end{document}